\numberwithin{equation}{section}
\newtheorem{theorem}{Theorem}[section]
\newtheorem{lemma}[theorem]{Lemma}
\newtheorem{definition}[theorem]{Definition}
\newtheorem{corollary}[theorem]{Corollary}
\newtheorem{remark}[theorem]{Remark}
\newtheorem{proposition}[theorem]{Proposition}
\newtheorem{question}[theorem]{Question}
\title{\Large\textbf{Group Extensions for Random Shifts of Finite Type}}
\author{
		Kexiang Yang
		\thanks{School of Mathematical Sciences and Shanghai Center for Mathematical Sciences, Fudan University, Shanghai 200433,
                P. R. China
			(E-mail: kxyangs@163.com)},
		Ercai Chen
		\thanks{School of Mathematical Sciences and Institute of Mathematics,Ministry of Education Key Laboratory of NSLSCS, Nanjing Normal University, Nanjing 210023, P. R. China,
			and Center  of Nonlinear Science, Nanjing University, Nanjing 210093, P. R. China
			(E-mail: ecchen@njnu.edu.cn)
		},
		Zijie Lin
		\thanks{School of Mathematics, University of Science and Technology of China, Hefei, Anhui, 230026, P. R. China
			(E-mail: zjlin137@126.com)},
		Xiaoyao Zhou
		\thanks{School of Mathematical Sciences and Institute of Mathematics, Ministry of Education Key Laboratory of NSLSCS, Nanjing Normal University, Nanjing 210023, P. R. China
			(E-mail: zhouxiaoyaodeyouxian@126.com)}
}
\begin{document}
\date{}

\maketitle
\begin{abstract}
Symbolic dynamical theory plays an important role in the research of amenability with a countable group. Motivated by the deep results of Dougall and Sharp, we study the group extensions for topologically mixing random shifts of finite type. For a countable group $G$, we consider the potential connections between relative Gurevi\v{c} pressure (entropy), the spectral radius of random Perron-Frobenius operator and amenability of $G$. Given $G^{\rm ab}$ by the abelianization of $G$ where $G^{\rm ab}=G/[G,G]$, we consider the random group extensions of random shifts of finite type between $G$ and $G^{\rm ab}$. It can be proved that the relative Gurevi\v{c} entropy of random group $G$ extensions is equal to the relative Gurevi\v{c} entropy of random group $G^{\rm ab}$ extensions if and only if $G$ is amenable. Moreover, we establish the relativized variational principle and discuss the unique equilibrium state for random group $\mathbb{Z}^{d}$ extensions.
\end{abstract}
\noindent
\textbf{Keywords:} Random shift of finite type; Group extension; Random Perron-Frobenius operator; Amenability; Variational principle.

\noindent\textbf{AMS subject classification: }37A35; 37H99.

\section{Introduction}

The concept of group extensions, introduced into the research of symbolic dynamics,
has constituted an essential component in the description of the
amenability of countable discrete groups from probability and geometry theory.
In 1954, F{\o}lner \cite{Folner1954Generalization} proved that a necessary and sufficient condition that there exists a real linear functional $\mathfrak{M}$ on a space $L$ of bounded real functions on a countable group $G$ with the properties: (1) $\inf_{x\in G}h(x)\leq \mathfrak{M}\{x\rightarrow h(x)\}\leq \sup_{x\in G}h(x)$; (2) $\mathfrak{M}\{x\rightarrow h(xa)\}=\mathfrak{M}\{x\rightarrow h(x)\}$ for any $a\in G$ and $h\in L$ is that $$\sup_{x\in G}H(x)\geq 0$$ for any function $H$ of the form $$H(x)=\sum_{i=1}^{n}(h_{i}(x)-h_{i}(xa_{i}))$$ where $i=1,\ldots,n$, $h_{i}\in L$ and $a_{i}\in G$. The linear functional $\mathfrak{M}$ is called a right-invariant \emph{Banach mean value} on $L$. Moreover, if for any $a,b\in G$, the linear functional $\mathfrak{M}$ satisfies
\begin{align*}
\mathfrak{M}\{x\rightarrow h(axb)\}=\mathfrak{M}\{x\rightarrow h(x)\},
\end{align*}
we call that $\mathfrak{M}$ is a bi-invariant Banach mean value.
Note that if there exists a right-invariant Banach mean value on $L$, there also exists a bi-invariant Banach mean value. It is called that $G$ has a \emph{full Banach mean value} if there exists a bi-invariant Banach mean value.  And then F{\o}lner \cite{Folner1955On} studied the countable groups equipped with a full Banach mean value and established a F{\o}lner theorem: the countable group $G$ has a full Banach mean value if and only if for any $0<k<1$, and any finite elements $a_{1},a_{2},\ldots, a_{n}\in G$, there exists a finite subset $E$ of $G$ such that
\begin{align*}
N(E\cap Ea_{i})\geq k\cdot N(E)
\end{align*}
where $N(E)$ denotes the number of elements in $E$.
We say that $G$ is \emph{amenable} if $G$ has a full Banach mean value. Later in 1959, Kesten \cite{Kesten1959Full} studied the amenability of $G$ by the spectral radius of the a Markov operator $M$ on $l^{2}(G)$ with a symmetric probability distribution $p$ on $G$, that is, $p(x)\geq 0$, $p(x)=p(x^{-1})$ for any $x\in G$ and $\sum_{x\in G}p(x)=1$, where $M$ is given by
\begin{align*}
Mh(x):=\sum_{y\in G}p\left(x\cdot y^{-1}\right)\cdot h(y)
\end{align*}
for any $h\in l^{2}(G)$.
For a probabilistic method, Kesten \cite{Kesten1959Full} proved that the spectral radius of the Markov operator $M$ on $l^{2}(G)$ is equal to $1$ if and only if $G$ is amenable. For a general case, Day \cite{Day1964Convolutions} considered a criterion for amenability of $G$ associated with a convolution operator on $l^{2}(G)$. The other discussions in terms of groups were referred to \cite{Brooks1985The,Lindenstrauss2001Pointwise,Roblin2005A,Sharp2007Critical,Dougall2016Amenability,Dougall2019Critical,Downarowicz2023Symbolic,Downarowicz2023Multiorders}. A question appears whether there is an equivalent characterization for amenability of $G$ by using the method of dynamical systems. This topic was discussed in the relations between amenability and topological dynamical systems by \cite{Sharp2007Critical,Stadlbauer2013An,Jaerisch2014Fractal,Jaerisch2016Recurrence,Dougall2021Anosov}, and symbolic dynamics with respect to countable alphabet has become a powerful technical tool in the study of amenability of $G$. Let us turn back to the background of the above topic. In 1969 and 1970, Gurevi\v{c} \cite{Gurevic1969Topological,Gurevic1970Shift} introduced an entropy for a topological countable Markov shift, which is called the \emph{Gurevi\v{c} entropy}. And then Sarig \cite{Sarig1999Thermodynamic} defined the definition of \emph{Gurevi\v{c} pressure} for a topological countable Markov shift, which extended the notion of Gurevi\v{c} entropy. The thermodynamic formalism and topological countable Markov shifts were also studied by \cite{Sarig1999Thermodynamic,Sarig2001Thermodynamic,Sarig2003Existence,Sarig2013Symbolic}. When it comes to the amenability of $G$, Stadlbauer \cite{Stadlbauer2013An} considered group extensions of topological countable Markov shifts with topologically mixing property, established that (1) if the group extension is a topologically transitive, symmetric group extension of the topologically mixing topological countable Markov shift, the potential function is H\"{o}lder continuous and weakly symmetric, and the Gurevi\v{c} pressure is finite, then the Gurevi\v{c} pressure of the group extension is equal to the Gurevi\v{c} pressure of topological countable Markov shift if $G$ is amenable; (2) assume that the group extension is a topologically transitive group extension of the topologically mixing topological countable Markov shift equipped with the BIP property, if the Gurevi\v{c} pressure of the group extension is equal to the Gurevi\v{c} pressure of topological countable Markov shift, then the countable group $G$ is amenable. Besides, the spectral radius of the Perron-Frobenius operator can be used to characterise amenability of $G$. In 2015, Jaerisch \cite{Jaerisch2015Groupextended} proved that the logarithm of the spectral radius of the Perron-Frobenius operator coincides with the Gurevi\v{c} pressure of topological countable markov shifts if and only if $G$ is amenable. But when $G$ is amenable, it may happen that the Gurevi\v{c} pressure of the group extension is not equal to the Gurevi\v{c} pressure of topological countable Markov shift (such an example for $G=\mathbb{Z}$ in \cite{Dougall2021Anosov}).
Let $M$ be a compact smooth Riemannian manifold and $\phi:M\rightarrow M$ be a transitive Anosov flow.
Now suppose that $X$ is a regular cover of $M$ with covering group $G$, i.e. $G$
acts freely and isometrically on $X$ such that $M = X/G$.
The interest is that the same question is derived form growth rates of periodic orbit of an Anosov flow $\phi$ and a lifted flow $\phi_{X}$, that is, $h_{top}(\phi)$ and $h_{Gur}(\phi_{X})$, respectively. In a special case, the above equivalence holds. Let $M = SV$ be the unit-tangent
bundle over a compact manifold $V$ with negative sectional curvatures and
$\phi_{X}$ be the geodesic flow, by \cite{Dougall2016Amenability} and \cite{Roblin2005A}, then
$h_{Gur}(\phi_{X})=h_{top}(\phi)$ if and only if $G$ is amenable. This equivalence fails for general Anosov flows. So the purpose is to find a proper entropy to compare $h_{Gur}(\phi_{X})$.
Later in 2021, Dougall and Sharp \cite{Dougall2021Anosov} studied the growth properties of group extensions of hyperbolic dynamical systems where they do not assume the symmetric condition. For the abelianization $$G^{\rm ab}=G/[G,G]$$ of a countable group $G$, Dougall and Sharp \cite{Dougall2021Anosov} established that the Gurevi\v{c} entropy of the group $G$ extension coincides with the Gurevi\v{c} entropy of the group $G^{\rm ab}$ extension for topological countable Markov shifts is and only if $G$ is amenable. When considering random Anosov flows, it seems that there are no periodic orbits in the usual sense. Our starting point is the theory of relative thermodynamic formalism. When it involves the systems whose evolution can be represented by compositions of different transformations, people will study symbolic dynamics under the actions of random dynamical systems, which is called random shifts of finite type. A question arises naturally whether this characterization admit a random countable Markov shift.
This motivates us to characterize the relations between amenability and random dynamical systems.
Let $(\Omega,\mathcal{F},\mathbb{P})$ be a probability space and $\vartheta:\Omega\rightarrow\Omega$ be a $\mathbb{P}$-preserving ergodic invertible transformation. Let $(X,\mathcal{B})$ be a metric space together with the distance function $d$ and the Borel $\sigma$-algebra $\mathcal{B}$. Let $\mathcal{E}\subset \Omega\times X$ be a measurable with respect to the product $\sigma$-algebra $\mathcal{F}\times \mathcal{B}$ and such that the fibers $$\mathcal{E}_{\omega}=\{x\in X:(\omega,x)\in \mathcal{E}\}, \ \omega\in \Omega,$$ are non-empty. A \emph{continuous bundle random dynamical system (RDS)} $f$ over $(\Omega,\mathcal{F},\mathbb{P},\vartheta)$ is generated by mappings $f_{\omega}:\mathcal{E}_{\omega}\rightarrow \mathcal{E}_{\vartheta\omega}$, that is,
\begin{align*}
f_{\omega}^{n}=\left\{
\begin{aligned}
f_{\vartheta^{n-1}\omega}\circ f_{\vartheta^{n-2}\omega}\circ\cdots \circ f_{\vartheta\omega}\circ f_{\omega} \ \ \ \ \ \ \ \ \ \ \ &\text{for} \ n>0,\\
Id \ \ \ \ \ \ \ \ \ \ \ \ \ \ \ \ \ \ \ \ \ \ \ \ \ \ \ \ \ \ \ \ \ \ \ \ \ \ \ \ \ \ \ \ \ \ \ \ \ \ \ \ \ \ \ \ \ \ \ &\text{for} \ n=0,
\end{aligned}
\right.
\end{align*}
so that the map $(\omega,x)\mapsto f_{\omega}x$ is measurable and the map $x\mapsto f_{\omega}x$ is continuous for $\mathbb{P}$-a.e. $\omega\in \Omega$. The map
\begin{align*}
\Theta:\mathcal{E}\rightarrow\mathcal{E}, \ \ \Theta(\omega,x)=(\vartheta\omega,f_{\omega}x)
\end{align*}
is called the skew product transformation. Many of results has been established in random dynamical systems (see \cite{Kifer1986Ergodic,Arnold1998Random,Khanin1996Thermodynamic,Dooley2015Local,Huang2017Entropy,Huang2019Ergodic})
The framework of relative thermodynamic formalism has been developed in \cite{Kifer1992Equilibrium,Bogenschutz1995Ruelle,Stadlbauer2021Thermodynamic} for random dynamical systems on random sets with compact fibers. Denker, Kifer and Stadlbauer \cite{Denker2008Thermodynamic} introduced a relative Gurevi\v{c} pressure for random countable Markov shifts equipped with topologically mixing property and studied the relative thermodynamic formalism for it.
It is possible that periodic points do not exist for a random dynamical system. After the works of \cite{Denker2008Thermodynamic}, Stadlbauer \cite{Stadlbauer2010On} gave a relative definition of relative BIP property for random countable Markov shifts and proved that this implies a relative version of the Perron-Frobenius theorem holds. Under the assumption of relative BIP property for random countable Markov shifts, Stadlbauer \cite{Stadlbauer2017Coupling} proved the random version of Ruelle's theorem.
Combining with the above results, group extensions of random countable Markov shifts are good objects to start with.
Firstly, we study the relative Gurevi\v{c} pressure of random countable Markov shifts equipped with topologically mixing property (see Definition \ref{relative Gurevic pressure of random countable Markov shifts}). Then we define the random group extension and give the definition of the relative Gurevi\v{c} pressure for the random group extension equipped with topologically mixing property. Motivated by the results of Dougall and Sharp \cite{Dougall2021Anosov}, suppose that $G^{\rm ab}$ is the abelianization of $G$ where $G^{\rm ab}=G/[G,G]$ and $[G,G]$ is a subgroup of $G$ generated by $$\left\{ghg^{-1}h^{-1}:g,h\in G\right\},$$  the main aim of this paper is to study the amenability of $G$ by consider the relation between the relative Gurevi\v{c} entropy of random group $G$ extensions and the relative Gurevi\v{c} entropy of random group $G^{\rm ab}$ extensions. In the following, we consider the question for a random shift
of finite type:

\begin{question}\label{problem 2 of random group extension and random countable Markov shift}
How one can find a natural comparison of the relative Gurevi\v{c} entropies between random group $G$ and $G^{\rm ab}$ extensions.
\end{question}

When the group $G$ is amenable, we answer the Question \ref{problem 2 of random group extension and random countable Markov shift} and give the following result. For the case of $\varphi=0$, by the Theorem \ref{the third result of main theorems}, we have
\begin{align*}
h_{Gur}^{(r)}(\mathcal{T})=h_{Gur}^{(r)}(\mathcal{T}_{\rm ab}).
\end{align*}

\begin{theorem}\label{the third result of main theorems}
Let $\mathcal{T}$ be a topologically mixing random group extension of a random shift of finite type $f$  by a countable group $G$. Suppose that $\varphi:\mathcal{E}\rightarrow\mathbb{R}$ is a locally fiber H\"{o}lder continuous function satisfying \eqref{the condition of locally fiber Holder continuous function}. If $G$ is amenable, then one has
\begin{align*}
\Pi_{Gur}^{(r)}(\mathcal{T},\tilde{\varphi})=\Pi_{Gur}^{(r)}(\mathcal{T}_{\rm ab},\tilde{\varphi}_{\rm ab}).
\end{align*}
\end{theorem}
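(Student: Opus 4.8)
The plan is to establish the two inequalities $\Pi_{Gur}^{(r)}(\mathcal{T},\tilde\varphi)\le\Pi_{Gur}^{(r)}(\mathcal{T}_{\rm ab},\tilde\varphi_{\rm ab})$ and $\Pi_{Gur}^{(r)}(\mathcal{T},\tilde\varphi)\ge\Pi_{Gur}^{(r)}(\mathcal{T}_{\rm ab},\tilde\varphi_{\rm ab})$ separately; only the reverse one uses amenability, and the role of the abelianisation is precisely that $G^{\rm ab}$ is amenable unconditionally, which is what lets the comparison close up. Write $G'=[G,G]$ and let $\pi\colon G\to G^{\rm ab}=G/G'$ be the quotient. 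It induces a fibrewise map $p$ from the phase space of $\mathcal{T}$ onto that of $\mathcal{T}_{\rm ab}$ which is the identity in the base coordinates and sends the group coordinate $g$ to $\pi(g)$; it intertwines the two skew products, exhibits $\mathcal{T}_{\rm ab}$ as a factor of $\mathcal{T}$, and, because $\tilde\varphi$ and $\tilde\varphi_{\rm ab}$ are the respective lifts of $\varphi$, satisfies $\tilde\varphi=\tilde\varphi_{\rm ab}\circ p$.

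For the first (easy) inequality I would argue at the level of the fibre partition functions $Z_n^\omega(\cdot)$ underlying Definition~\ref{relative Gurevic pressure of random countable Markov shifts} and its group-extension version. Fix $\mathbb{P}$-a.e.\ $\omega$; an admissible word over $\mathcal{E}_\omega$ contributing to $Z_n^\omega(\mathcal{T},\tilde\varphi)$ is one whose $G$-displacement after $n$ steps is trivial, and $p$ carries it injectively to an admissible word contributing to $Z_n^\omega(\mathcal{T}_{\rm ab},\tilde\varphi_{\rm ab})$ — whose $G^{\rm ab}$-displacement is then $0$ — with the same $\tilde\varphi$-weight, by $\tilde\varphi=\tilde\varphi_{\rm ab}\circ p$. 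Hence $Z_n^\omega(\mathcal{T},\tilde\varphi)\le Z_n^\omega(\mathcal{T}_{\rm ab},\tilde\varphi_{\rm ab})$ for every $n$; dividing by $n$, taking logarithms and passing to the limit defining the relative Gurevi\v{c} pressure (a.s.\ and constant, by the subadditive structure of the definition) gives $\Pi_{Gur}^{(r)}(\mathcal{T},\tilde\varphi)\le\Pi_{Gur}^{(r)}(\mathcal{T}_{\rm ab},\tilde\varphi_{\rm ab})$, with no use of amenability.

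For the reverse inequality I would pass to the random Perron--Frobenius operators. Let $\mathcal{L}_{\mathcal{T},\omega}$ and $\mathcal{L}_{\mathcal{T}_{\rm ab},\omega}$ be the transfer operator cocycles of $(\mathcal{T},\tilde\varphi)$ and $(\mathcal{T}_{\rm ab},\tilde\varphi_{\rm ab})$ acting on the natural bundles of fibre sections graded by $G$, respectively $G^{\rm ab}$ (both $\tilde\varphi$ and $\tilde\varphi_{\rm ab}$ inherit local fibre H\"older continuity and condition~\eqref{the condition of locally fiber Holder continuous function} from $\varphi$, so the relativised Ruelle theory applies to each). Summation over the fibres of $\pi$ gives, for each $\omega$, a norm-one metric quotient map $p_*$ from the $G$-graded bundle onto the $G^{\rm ab}$-graded one, and inspecting the definition of the transfer operator shows $p_*\circ\mathcal{L}_{\mathcal{T},\omega}=\mathcal{L}_{\mathcal{T}_{\rm ab},\omega}\circ p_*$. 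Iterating, and using that $p_*$ is a metric quotient, yields $\|\mathcal{L}_{\mathcal{T}_{\rm ab},\omega}^{(n)}\|\le\|\mathcal{L}_{\mathcal{T},\omega}^{(n)}\|$ for all $n$ and a.e.\ $\omega$, hence $\rho(\mathcal{L}_{\mathcal{T}_{\rm ab}})\le\rho(\mathcal{L}_{\mathcal{T}})$ for the (a.s.\ constant, Kingman-type) fibrewise spectral radii. I now invoke the identification — valid for amenable acting groups, as the random/relative counterpart of the criteria of Jaerisch and Stadlbauer established earlier in the paper — of the logarithm of the spectral radius of the random Perron--Frobenius operator with the relative Gurevi\v{c} pressure. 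As $G^{\rm ab}$ is abelian, hence amenable, $\log\rho(\mathcal{L}_{\mathcal{T}_{\rm ab}})=\Pi_{Gur}^{(r)}(\mathcal{T}_{\rm ab},\tilde\varphi_{\rm ab})$, and as $G$ is amenable by hypothesis, $\log\rho(\mathcal{L}_{\mathcal{T}})=\Pi_{Gur}^{(r)}(\mathcal{T},\tilde\varphi)$; combining,
\[
\Pi_{Gur}^{(r)}(\mathcal{T}_{\rm ab},\tilde\varphi_{\rm ab})=\log\rho(\mathcal{L}_{\mathcal{T}_{\rm ab}})\le\log\rho(\mathcal{L}_{\mathcal{T}})=\Pi_{Gur}^{(r)}(\mathcal{T},\tilde\varphi),
\]
which, together with the previous paragraph, yields the asserted equality.

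The main obstacle is this middle step in the random category. One must pin down the correct measurable bundle of Banach spaces of fibre sections over the non-compact, countable-alphabet fibres $\mathcal{E}_\omega$ — an $\ell^1$-type weighted section space, graded by the group coordinate and carrying the $\vartheta$-cocycle structure — verify that $p_*$ is genuinely a fibrewise metric quotient intertwining the two operator cocycles, and read ``spectral radius'' as the $\mathbb{P}$-a.s.\ constant exponential growth rate of $\|\mathcal{L}_\omega^{(n)}\|$, so that the comparison survives the intertwining and feeds into the pressure identity. One should also record that topological mixing passes from $\mathcal{T}$ to its factor $\mathcal{T}_{\rm ab}$ and that condition~\eqref{the condition of locally fiber Holder continuous function} is preserved under the lift, so that the spectral-radius/pressure identification is legitimately available for both systems; these are routine but not automatic. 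Underlying that identification is a F\o lner--Kesten estimate producing near-eigensections of $\mathcal{L}^{(n)}$ supported on F\o lner sets of $G$ (resp.\ $G^{\rm ab}$), and any measurability or a.s.-constancy subtleties are localised there rather than in the reduction above.
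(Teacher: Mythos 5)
Your first inequality is fine and is exactly what the paper does: since any word with $\psi_{\omega}^{n}(\alpha)=id$ also has $\psi_{{\rm ab},\omega}^{n}(\alpha)=0$ and $\tilde{\varphi}=\tilde{\varphi}_{\rm ab}\circ p$, one gets $\tilde{Z}_{n}^{\omega}(\tilde{\varphi},a,a)\leq\tilde{Z}_{{\rm ab},n}^{\omega}(\tilde{\varphi}_{\rm ab},a,a)$ and hence $\Pi_{Gur}^{(r)}(\mathcal{T},\tilde{\varphi})\leq\Pi_{Gur}^{(r)}(\mathcal{T}_{\rm ab},\tilde{\varphi}_{\rm ab})$, with no amenability needed.

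The reverse inequality, however, rests on an identification that is neither proved in the paper nor true in general: you invoke $\log\rho(\mathcal{L}_{\mathcal{T}})=\Pi_{Gur}^{(r)}(\mathcal{T},\tilde{\varphi})$ and $\log\rho(\mathcal{L}_{\mathcal{T}_{\rm ab}})=\Pi_{Gur}^{(r)}(\mathcal{T}_{\rm ab},\tilde{\varphi}_{\rm ab})$ for amenable acting groups, but the Kesten/Day-type criterion established in this paper (Theorem \ref{the second result of main theorems}, the random analogue of Jaerisch's result) identifies $\log {\rm spr}_{\mathcal{H}}(\mathcal{L}_{\tilde{\varphi}}^{\omega})$ with the pressure of the \emph{base}, $\Pi_{Gur}^{(r)}(f,\varphi)$, not with the pressure of the extension. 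These differ in general even for amenable $G$: for a non-symmetric $\mathbb{Z}$-extension with drift (precisely the Dougall--Sharp example recalled in the introduction) one has $\Pi_{Gur}^{(r)}(\mathcal{T},\tilde{\varphi})<\Pi_{Gur}^{(r)}(f,\varphi)=\log {\rm spr}_{\mathcal{H}}(\mathcal{L}_{\tilde{\varphi}}^{\omega})$, so both displayed equalities in your chain fail; and if you replace them by the correct ones, your spectral-radius comparison $\rho(\mathcal{L}_{\mathcal{T}_{\rm ab}})\leq\rho(\mathcal{L}_{\mathcal{T}})$ collapses to $\Pi_{Gur}^{(r)}(f,\varphi)\leq\Pi_{Gur}^{(r)}(f,\varphi)$ and gives no bound on $\Pi_{Gur}^{(r)}(\mathcal{T}_{\rm ab},\tilde{\varphi}_{\rm ab})$ in terms of $\Pi_{Gur}^{(r)}(\mathcal{T},\tilde{\varphi})$. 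Avoiding exactly this obstruction is the point of the paper's argument: it constructs Patterson--Sullivan-type fiber measures $\nu_{\omega,\eta,g}$ from the series $\mathcal{P}_{a}(\omega,t)$ at the transition parameter $\rho=\exp(\Pi_{Gur}^{(r)}(\mathcal{T},\tilde{\varphi}))$, proves the comparison estimates of Lemmas \ref{the lemma 1 of main result}--\ref{the lemma 3 of main result}, and then uses the Banach mean $\mathfrak{M}$ of the amenable group $G$ to define a homomorphism $\Phi_{\omega}:G\rightarrow\mathbb{R}$, which necessarily vanishes on $[G,G]$; since every word counted in $\tilde{Z}_{{\rm ab},n}^{\omega}$ has $\psi_{\omega}^{n}(\alpha)\in[G,G]$, this yields $\tilde{Z}_{{\rm ab},n}^{\omega}\leq C(\omega)\rho^{n+N+1}$ and hence $\Pi_{Gur}^{(r)}(\mathcal{T}_{\rm ab},\tilde{\varphi}_{\rm ab})\leq\Pi_{Gur}^{(r)}(\mathcal{T},\tilde{\varphi})$. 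To make your operator-theoretic route work you would need a spectral quantity that computes the extension pressure without symmetry assumptions, and that is precisely what is unavailable here.
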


For the Question \ref{problem 2 of random group extension and random countable Markov shift}, we prove the converse to Theorem \ref{the third result of main theorems} for the case of $\varphi=0$. For a general case, by the definitions of $h_{Gur}^{(r)}(\mathcal{T}_{\rm ab})$ and $h_{Gur}^{(r)}(\mathcal{T})$, it can be proved that
\begin{align*}
h_{Gur}^{(r)}(\mathcal{T})\leq h_{Gur}^{(r)}(\mathcal{T}_{\rm ab}).
\end{align*}
By using Theorem \ref{the one hand of the fourth result of main theorems}, if $G$ is non-amenable, then we have
$$h_{Gur}^{(r)}(\mathcal{T})<h_{Gur}^{(r)}(\mathcal{T}_{\rm ab}).$$
Combining with Theorem \ref{the third result of main theorems} and Theorem \ref{the one hand of the fourth result of main theorems}, we prove the following main result.

\begin{theorem}\label{the fourth result of main theorems}
Let $\mathcal{T}$ be a topologically mixing random group extension of a random shift of finite type $f$ by a countable group $G$. Then we have $h_{Gur}^{(r)}(\mathcal{T})=h_{Gur}^{(r)}(\mathcal{T}_{\rm ab})$ if and only if $G$ is amenable.
\end{theorem}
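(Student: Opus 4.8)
The plan is to obtain Theorem~\ref{the fourth result of main theorems} by assembling the two one-directional statements already available, together with the elementary monotonicity of the relative Gurevi\v{c} entropy under abelianization.

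For the \emph{sufficiency} direction, suppose $G$ is amenable. I would apply Theorem~\ref{the third result of main theorems} to the zero potential $\varphi\equiv 0$, which trivially satisfies \eqref{the condition of locally fiber Holder continuous function}. Since the lifted potentials $\tilde{\varphi}$ and $\tilde{\varphi}_{\rm ab}$ then both vanish, the relative Gurevi\v{c} pressures collapse to the corresponding relative Gurevi\v{c} entropies, $\Pi_{Gur}^{(r)}(\mathcal{T},0)=h_{Gur}^{(r)}(\mathcal{T})$ and $\Pi_{Gur}^{(r)}(\mathcal{T}_{\rm ab},0)=h_{Gur}^{(r)}(\mathcal{T}_{\rm ab})$, so Theorem~\ref{the third result of main theorems} immediately yields $h_{Gur}^{(r)}(\mathcal{T})=h_{Gur}^{(r)}(\mathcal{T}_{\rm ab})$.

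For the \emph{necessity} direction I would argue by contraposition: assume $G$ is not amenable. First I would record the unconditional bound $h_{Gur}^{(r)}(\mathcal{T})\le h_{Gur}^{(r)}(\mathcal{T}_{\rm ab})$. This holds because the canonical surjection $\pi\colon G\to G^{\rm ab}$ sends every fiberwise admissible word that closes up at the identity of $G$ to one that closes up at the identity of $G^{\rm ab}$; hence, $\mathbb{P}$-a.e.\ and for each $n$, the partition function defining $h_{Gur}^{(r)}(\mathcal{T})$ is dominated by the one defining $h_{Gur}^{(r)}(\mathcal{T}_{\rm ab})$, and passing to the exponential growth rate (the limit exists under topological mixing, cf.\ \cite{Denker2008Thermodynamic}) gives the claim. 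It then remains only to exclude equality, which is exactly Theorem~\ref{the one hand of the fourth result of main theorems}: for non-amenable $G$ it produces the strict inequality $h_{Gur}^{(r)}(\mathcal{T})<h_{Gur}^{(r)}(\mathcal{T}_{\rm ab})$. Combining the two halves proves the equivalence.

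The hard part is concentrated entirely in the necessity step, i.e.\ in Theorem~\ref{the one hand of the fourth result of main theorems}: one has to convert non-amenability of $G$ into a strictly smaller fiberwise exponential growth rate. The route I would take, in the spirit of Kesten's amenability criterion and its group-extension refinements by Stadlbauer and Jaerisch, is to represent the relevant partition function of $\mathcal{T}$ as a diagonal matrix coefficient $\langle\delta_e,\mathcal{L}_\omega^{\,n}\delta_e\rangle$ of a random Perron--Frobenius (convolution-type) operator on $\ell^2(G)$, and to show that its fiberwise spectral radius drops strictly below the corresponding quantity on $\ell^2(G^{\rm ab})$ precisely when $G$ is non-amenable; the resulting spectral gap then forces the exponential-growth drop. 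The delicate point will be to transport this spectral estimate through the skew-product structure and a subadditive ergodic theorem argument so that the strict inequality holds $\mathbb{P}$-almost everywhere, uniformly enough to affect the a.e.\ growth rate.
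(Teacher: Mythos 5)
Your proposal is correct and takes essentially the same route as the paper: the paper's proof of Theorem \ref{the fourth result of main theorems} is exactly the assembly you describe, namely the elementary inequality $h_{Gur}^{(r)}(\mathcal{T})\leq h_{Gur}^{(r)}(\mathcal{T}_{\rm ab})$, Theorem \ref{the third result of main theorems} with $\varphi=0$ for the amenable direction, and Theorem \ref{the one hand of the fourth result of main theorems} to exclude equality when $G$ is non-amenable. Your concluding sketch of how Theorem \ref{the one hand of the fourth result of main theorems} itself would be proved is also broadly consistent with the paper's method (a spectral-radius drop for the random Perron--Frobenius operator), though the paper additionally routes through the identification $h_{Gur}^{(r)}(\mathcal{T}_{\rm ab})=\Pi_{top}^{(r)}(f,\langle\xi,\psi_{\rm ab}\rangle)$; since both you and the paper invoke that theorem as an established result, this does not affect the argument.
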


Denote by $\mathcal{M}_{\mathbb{P}}^{1}(\mathcal{E},f)$ the sets of $\Theta$-invariant probability measures $\mu$ on $\mathcal{E}$ having the marginal $\mathbb{P}$ on $\Omega$. Since $f$ is a topologically mixing random shift of finite type, by \cite{Kifer2001On} it can be obtained that there exists a relativized variational principle for $h_{Gur}^{(r)}(f)$ in the following statement:
\begin{align}\label{classical relative variational principle}
h_{Gur}^{(r)}(f)=h_{top}(f)=\sup_{\mu\in\mathcal{M}_{\mathbb{P}}^{1}(\mathcal{E},f)}\left\{h_{\mu}^{(r)}(f)\right\}.
\end{align}
And then by the statements of \cite{Kifer2008Thermodynamic,Denker2008Thermodynamic}, there exists a unique $\Theta$-invariant fiber (or relative) Gibbs measure $\mu$ which is defined by $d\mu(\omega,x)=d\mu_{\omega}(x)d\mathbb{P}(\omega)$ and a random variable $\lambda:\Omega\rightarrow(0,+\infty)$ satisfying $\int\log\lambda_{\omega}d\mathbb{P}(\omega)=\Pi_{Gur}^{(r)}(f,\varphi)$ such that there exists some random variable $C_{\varphi}:\Omega\rightarrow (0,+\infty)$ satisfying
\begin{align*}
\int_{\Omega} |\log C_{\varphi}(\omega)|d\mathbb{P}(\omega)<\infty
\end{align*}
such that
\begin{align*}
C_{\varphi}(\omega)^{-1}\leq \Lambda_{n}(\omega)\frac{\mu_{\omega}([x_{0},x_{1},\ldots,x_{n-1}]_{\omega})}{\exp\left(S_{n}\varphi(\omega,x)\right)}\leq C_{\varphi}(\omega),
\end{align*}
for any $x\in\mathcal{E}_{\omega}$ and $\mathbb{P}$-a.e. $\omega\in \Omega$, where $\Lambda_{n}(\omega):=\prod_{i=0}^{n-1}\lambda_{\vartheta^{i}\omega}$. It maximises in the variational principle in \eqref{classical relative variational principle}.
Let $G=\mathbb{Z}^{d}$, then $\psi$ is given by for any $(\omega,x)\in\mathcal{E}$, one has $$\psi(\omega,x):=\left(\psi_{1}(\omega,x),\psi_{2}(\omega,x),\ldots,\psi_{d}(\omega,x)\right).$$
Let $\mu\in\mathcal{M}_{\mathbb{P}}^{1}(\mathcal{E},f)$, define
$$\int \psi d\mu:=\left(\int\psi_{1}d\mu,\int\psi_{2}d\mu,\ldots,\int\psi_{d}d\mu\right).$$
By using the same processes with \eqref{variational principle of random group extensions}, we have the following relativized variational principle.

\begin{theorem}
Let $\mathcal{T}$ be a topologically mixing random group extension of a random shift of finite type $f$ by $\mathbb{Z}^{d}$. Then we have
\begin{align*}
h_{Gur}^{(r)}(\mathcal{T})=\sup_{\mu\in\mathcal{M}_{\mathbb{P}}^{1}(\mathcal{E},f)}\left\{h_{\mu}^{(r)}(f): \int \psi d\mu={\bf 0}\right\}
\end{align*}
where ${\bf 0}$ is a $d$-dimensional zero vector.
\end{theorem}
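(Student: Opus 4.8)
Since $G=\mathbb{Z}^{d}$ is abelian we have $\mathcal{T}=\mathcal{T}_{\rm ab}$, and the extension is described by the $\mathbb{Z}^{d}$-valued fiber cocycle $\psi=(\psi_{1},\dots,\psi_{d})$, which under the standing assumptions of the paper is (essentially locally fiber constant, hence) locally fiber H\"older and integrable in the sense required by the relative thermodynamic formalism. For $\theta\in\mathbb{R}^{d}$ write $\langle\theta,\psi\rangle:=\sum_{j=1}^{d}\theta_{j}\psi_{j}$, a locally fiber H\"older potential on $\mathcal{E}$ satisfying \eqref{the condition of locally fiber Holder continuous function}, and set
\begin{align*}
P(\theta):=\Pi_{Gur}^{(r)}(f,\langle\theta,\psi\rangle).
\end{align*}
By H\"older's inequality $\theta\mapsto P(\theta)$ is convex and, by the standing integrability hypothesis, finite on $\mathbb{R}^{d}$. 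The pressure form of the relativized variational principle \eqref{classical relative variational principle} (available from \cite{Kifer2008Thermodynamic,Denker2008Thermodynamic} together with the random Ruelle theorem under the relative BIP property) gives, for every $\theta$,
\begin{align*}
P(\theta)=\sup_{\mu\in\mathcal{M}_{\mathbb{P}}^{1}(\mathcal{E},f)}\left\{h_{\mu}^{(r)}(f)+\langle\theta,\textstyle\int\psi\,d\mu\rangle\right\}.
\end{align*}
The proof then follows the scheme of \eqref{variational principle of random group extensions}, specialized to $G=\mathbb{Z}^{d}$, in three steps.

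\emph{Step 1: $h_{Gur}^{(r)}(\mathcal{T})\le\inf_{\theta\in\mathbb{R}^{d}}P(\theta)$.} Every admissible fiber word that closes up in $\mathcal{T}$ over the base word has trivial group displacement, i.e.\ $S_{n}\psi\equiv{\bf 0}$ on it, so $e^{\langle\theta,S_{n}\psi\rangle}=1$ there. Comparing the $n$-th partition function of $\mathcal{T}$ (which counts exactly such words) with the tilted $n$-th partition function of $f$ with potential $\langle\theta,\psi\rangle$ (which sums $e^{\langle\theta,S_{n}\psi\rangle}\ge 0$ over all admissible base-closing words) gives $Z_{n}^{\mathcal{T}}(\omega)\le Z_{n}^{f}\big(\omega,\langle\theta,\psi\rangle\big)$ for $\mathbb{P}$-a.e.\ $\omega$ and all $n,\theta$. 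Applying $\tfrac1n\log(\cdot)$, letting $n\to\infty$ and integrating over $\Omega$ yields $h_{Gur}^{(r)}(\mathcal{T})\le P(\theta)$ for every $\theta$.

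\emph{Step 2: $\inf_{\theta}P(\theta)=\sup\{h_{\mu}^{(r)}(f):\int\psi\,d\mu={\bf 0}\}$.} Inserting the variational formula for $P(\theta)$ and using that $\inf_{\theta\in\mathbb{R}^{d}}\langle\theta,v\rangle$ equals $0$ if $v={\bf 0}$ and $-\infty$ otherwise, a minimax exchange gives
\begin{align*}
\inf_{\theta}P(\theta)=\inf_{\theta}\sup_{\mu}\left\{h_{\mu}^{(r)}(f)+\langle\theta,\textstyle\int\psi\,d\mu\rangle\right\}=\sup_{\mu}\inf_{\theta}\left\{h_{\mu}^{(r)}(f)+\langle\theta,\textstyle\int\psi\,d\mu\rangle\right\}=\sup_{\mu:\int\psi\,d\mu={\bf 0}}h_{\mu}^{(r)}(f).
\end{align*}
The exchange is justified by convex duality: $E(v):=\sup\{h_{\mu}^{(r)}(f):\int\psi\,d\mu=v\}$ is concave, the displayed formula identifies $P$ with the (negative) concave conjugate of $E$, and Fenchel--Moreau biconjugation gives $\inf_{\theta}P(\theta)=E({\bf 0})$ once $E$ is upper semicontinuous at ${\bf 0}$. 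Finiteness of $h_{Gur}^{(r)}(\mathcal{T})$ together with Step 1 forces ${\bf 0}$ into the closed convex hull of the drift set $\{\int\psi\,d\mu\}$; when ${\bf 0}$ is interior, the minimizer $\theta^{\ast}$ of the finite convex function $P$ is attained and its (unique, by the random Ruelle theorem) equilibrium state $\mu^{\ast}$ satisfies $\int\psi\,d\mu^{\ast}=\nabla P(\theta^{\ast})={\bf 0}$ and $h_{\mu^{\ast}}^{(r)}(f)=P(\theta^{\ast})-\langle\theta^{\ast},{\bf 0}\rangle=\inf_{\theta}P(\theta)$, which gives the required semicontinuity; the boundary case follows by a limiting argument along interior directions.

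\emph{Step 3: $h_{Gur}^{(r)}(\mathcal{T})\ge\sup\{h_{\mu}^{(r)}(f):\int\psi\,d\mu={\bf 0}\}$.} Fix $\mu$ with $\int\psi\,d\mu={\bf 0}$. By the fiber Shannon--McMillan--Breiman theorem \cite{Kifer1986Ergodic}, for $\mathbb{P}$-a.e.\ $\omega$ there are $e^{n(h_{\mu}^{(r)}(f)+o(1))}$ admissible $n$-cylinders in $\mathcal{E}_{\omega}$ of $\mu_{\omega}$-measure $e^{-n(h_{\mu}^{(r)}(f)+o(1))}$; it remains to show that a fraction at least $n^{-d/2+o(1)}$ of them additionally satisfy $S_{n}\psi\equiv{\bf 0}$ and close up in $\mathcal{T}$. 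This is a random local limit theorem for the $\mathbb{Z}^{d}$-cocycle $S_{n}\psi$ under the relative Gibbs measure: zero drift gives $S_{n}\psi=o(n)$ $\mu$-a.e., topological mixing of $\mathcal{T}$ excludes any residue-class (sublattice) obstruction, and the spectral/perturbative analysis of the twisted transfer operators $\mathcal{L}_{\theta}$ near $\theta={\bf 0}$ supplied by the random Ruelle theorem yields $\mu_{\omega}(\{S_{n}\psi={\bf 0}\})\asymp n^{-d/2}$. Multiplying by the cylinder count produces $n^{-d/2+o(1)}e^{n h_{\mu}^{(r)}(f)}$ closing words, whence $h_{Gur}^{(r)}(\mathcal{T})\ge h_{\mu}^{(r)}(f)$ after the entropy-irrelevant polynomial loss. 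Combining Steps 1--3 proves the theorem. The essential difficulty is precisely this random local limit theorem of Step 3 — obtaining the $n^{-d/2}$ order uniformly in $\omega$ over a non-compact countable alphabet — which rests on the relative BIP property and on the spectral gap and analytic $\theta$-dependence of $\mathcal{L}_{\theta}$ near $\theta={\bf 0}$ furnished by Stadlbauer's random Ruelle theorem; this is exactly the mechanism behind \eqref{variational principle of random group extensions}, and for $\mathbb{Z}^{d}$ the availability of Fourier inversion over the dual torus makes the estimate explicit. The duality in Step 2 is routine once differentiability of $\theta\mapsto P(\theta)$ (equivalently, uniqueness of equilibrium states and upper semicontinuity of $v\mapsto E(v)$) is in hand.
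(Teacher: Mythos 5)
Your overall strategy coincides with the paper's: the paper proves this theorem ``by the same processes as \eqref{variational principle of random group extensions}'', i.e.\ in the proof of Theorem \ref{the one hand of the fourth result of main theorems} it picks (following \cite{Parry1990Zeta,Kifer2008Thermodynamic}) the tilting parameter $\xi$ minimizing $\theta\mapsto\Pi_{top}^{(r)}(f,\langle\theta,\psi\rangle)$, whose equilibrium state has zero drift, identifies this minimized pressure with $\sup\{h_{\mu}^{(r)}(f):\int\psi\,d\mu={\bf 0}\}$ via the variational principle, and then sandwiches the zero-displacement partition function between $\exp\bigl(n(\Pi_{top}^{(r)}(f,\langle\xi,\psi\rangle)-\epsilon)\bigr)$ and $\exp\bigl(n\,\Pi_{top}^{(r)}(f,\langle\xi,\psi\rangle)\bigr)$ up to polynomial factors, following \cite{Marcus1990Entropy,Pollicott1994Rates}. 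Your Steps 1 and 2 are this same tilting-plus-duality mechanism (your Step 1 comparison $\tilde{Z}_{n}\leq Z_{n}(\langle\theta,\psi\rangle)$ for every $\theta$ is in fact a cleaner way to get the upper bound than the paper's one-sided counting estimate at $\xi$), and your closing remarks about the local limit theorem for the twisted operators are exactly the mechanism behind the paper's lower counting bound.

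The genuine problem is in Step 3 as written: you fix an \emph{arbitrary} $\mu\in\mathcal{M}_{\mathbb{P}}^{1}(\mathcal{E},f)$ with $\int\psi\,d\mu={\bf 0}$ and then invoke a local limit theorem $\mu_{\omega}(\{S_{n}\psi={\bf 0}\})\asymp n^{-d/2}$ ``under the relative Gibbs measure''. An arbitrary zero-drift invariant measure is not a relative Gibbs measure, and the $n^{-d/2}$ lower bound is precisely a spectral-gap statement about twisted transfer operators associated with an equilibrium/Gibbs state; it is simply not available for a general invariant $\mu$ (nor is the SMB cylinder count for $\mu$ compatible with a Gibbs-type control of $\{S_{n}\psi={\bf 0}\}$ inside those cylinders). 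As stated, Step 3 therefore does not prove $h_{Gur}^{(r)}(\mathcal{T})\geq h_{\mu}^{(r)}(f)$ for every constrained $\mu$. The repair is already contained in your Step 2: the constrained supremum is attained (equals $\inf_{\theta}P(\theta)$) at the equilibrium state $\mu^{\ast}$ of $\langle\theta^{\ast},\psi\rangle$, which \emph{is} a relative Gibbs measure with zero drift, so it suffices to run the local limit / counting argument for $\mu^{\ast}$ alone: on zero-displacement words the Gibbs weights $\exp\bigl(S_{n}\langle\theta^{\ast},\psi\rangle\bigr)$ equal $1$, so $\mu^{\ast}_{\omega}(\{S_{n}\psi={\bf 0}\})\gtrsim n^{-d/2}$ converts into $\tilde{Z}_{n}^{\omega}\gtrsim n^{-d/2}\exp\bigl(n(P(\theta^{\ast})+o(1))\bigr)$, which is exactly the lower bound the paper imports from \cite{Marcus1990Entropy,Pollicott1994Rates}. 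With Step 3 restricted to $\mu^{\ast}$ (and with the attainment of $\theta^{\ast}$, i.e.\ ${\bf 0}$ lying in the interior of the drift set, justified from topological mixing of $\mathcal{T}$ — a point both you and the paper treat briskly), your proof is correct and is essentially the paper's argument.
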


As an consequence, we consider the equilibrium measure and give an application for a topologically mixing random $\mathbb{Z}^{d}$ extension of a random shift of finite type $f$.
\begin{corollary}
Let $\mathcal{T}$ be a topologically mixing random group extension of a random shift of finite type $f$ by $\mathbb{Z}^{d}$. Then there exists an unique $\mu\in\mathcal{M}_{\mathbb{P}}^{1}(\mathcal{E},f)$ maximizing in the above variational principle such that $$\int \psi d\mu={\bf 0}.$$
Moreover, the $\mu$ is an unique determined equilibrium measure for some random continuous function of inner product form $$\left(\xi,  \psi\right):=\sum_{i=1}^{d}c_{i}\cdot \psi_{i}$$ where $c:=(c_{1},c_{2},\ldots,c_{d})\in\mathbb{R}^{d}$.
\end{corollary}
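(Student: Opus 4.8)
The plan is to package the constrained entropy-maximisation as a Legendre--Fenchel duality for the family of pressures obtained by tilting along the defining cocycle $\psi$ of the $\mathbb{Z}^{d}$ extension. For $c=(c_{1},\dots,c_{d})\in\mathbb{R}^{d}$ I would write $(c,\psi):=\sum_{i=1}^{d}c_{i}\psi_{i}$ and set $P(c):=\Pi_{Gur}^{(r)}(f,(c,\psi))$. Since each component $\psi_{i}$ of the displacement cocycle is locally fiber H\"older continuous, every tilted potential $(c,\psi)$ satisfies the hypotheses of the relative thermodynamic formalism of \cite{Denker2008Thermodynamic,Kifer2008Thermodynamic,Stadlbauer2017Coupling}: $P(c)$ is finite, it admits a unique $\Theta$-invariant relative Gibbs measure $\mu_{c}\in\mathcal{M}_{\mathbb{P}}^{1}(\mathcal{E},f)$, and $\mu_{c}$ is the unique equilibrium state for $(c,\psi)$. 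In particular $P$ is a supremum of affine functions of $c$, hence convex and continuous on $\mathbb{R}^{d}$.

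Next I would record the easy half of the duality. For every $\nu\in\mathcal{M}_{\mathbb{P}}^{1}(\mathcal{E},f)$ with $\int\psi\,d\nu={\bf 0}$ and every $c$, the relative variational principle gives $h_{\nu}^{(r)}(f)=h_{\nu}^{(r)}(f)+\int(c,\psi)\,d\nu\le P(c)$; taking the supremum over admissible $\nu$ and then the infimum over $c$, and invoking the variational principle for $h_{Gur}^{(r)}(\mathcal{T})$ from the preceding theorem, yields $h_{Gur}^{(r)}(\mathcal{T})\le\inf_{c\in\mathbb{R}^{d}}P(c)$. To produce a maximiser I would then show that $P$ is coercive, i.e. $P(c)\to+\infty$ as $|c|\to\infty$, so that the infimum is attained at some $c^{\ast}\in\mathbb{R}^{d}$. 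Once that is known, uniqueness of the equilibrium state forces the subdifferential of the convex function $P$ at $c^{\ast}$ to be the single point $\int\psi\,d\mu_{c^{\ast}}$, so $P$ is differentiable at $c^{\ast}$ with $\nabla P(c^{\ast})=\int\psi\,d\mu_{c^{\ast}}$, and minimality gives $\int\psi\,d\mu_{c^{\ast}}={\bf 0}$. Then $P(c^{\ast})=h_{\mu_{c^{\ast}}}^{(r)}(f)+\int(c^{\ast},\psi)\,d\mu_{c^{\ast}}=h_{\mu_{c^{\ast}}}^{(r)}(f)\le h_{Gur}^{(r)}(\mathcal{T})$, and combined with the previous inequality this shows that $\mu:=\mu_{c^{\ast}}$ maximises and that equality holds throughout.

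For uniqueness, if $\nu$ is another maximiser with $\int\psi\,d\nu={\bf 0}$, then $h_{\nu}^{(r)}(f)=h_{Gur}^{(r)}(\mathcal{T})=P(c^{\ast})$, hence $h_{\nu}^{(r)}(f)+\int(c^{\ast},\psi)\,d\nu=P(c^{\ast})$ and $\nu$ is an equilibrium state for $(c^{\ast},\psi)$; by the uniqueness supplied by the relative Ruelle theorem, $\nu=\mu_{c^{\ast}}=\mu$. The ``moreover'' statement is then immediate: by construction $\mu=\mu_{c^{\ast}}$ is the unique equilibrium state for the random continuous function $(\xi,\psi)=\sum_{i=1}^{d}c_{i}\cdot\psi_{i}$ with $\xi=c:=c^{\ast}\in\mathbb{R}^{d}$.

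The main obstacle I expect is the coercivity of $P$, that is, that the infimum of $c\mapsto P(c)$ is genuinely attained in $\mathbb{R}^{d}$ rather than merely approached at infinity. This is exactly where topological mixing of $\mathcal{T}$ and the fact that the extension is by the full group $\mathbb{Z}^{d}$ must enter: failure of coercivity along a direction $v$ would force $\sum_{i}v_{i}\psi_{i}$ to be relatively cohomologous to a non-positive function, so the $\mathbb{Z}^{d}$-valued cocycle $\psi$ would avoid an open half-space, contradicting transitivity of the extension. This is the random counterpart of the non-degeneracy (non-arithmeticity, full support of winding cycles) conditions in the deterministic $\mathbb{Z}^{d}$-cover theory. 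Establishing differentiability of $P$ at the minimiser is a secondary technical point, but it reduces cleanly to the uniqueness of equilibrium states already at our disposal.
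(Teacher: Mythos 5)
Your proposal follows essentially the same route as the paper: the paper (following Parry--Pollicott, Pollicott--Sharp and Kifer's relative formalism) also minimizes the tilted pressure $c\mapsto\Pi_{top}^{(r)}(f,\langle c,\psi\rangle)$ over $c\in\mathbb{R}^{d}$, takes the unique relative equilibrium state at the minimizer $\xi$, notes that it has $\int\psi\,d\mu=\mathbf{0}$ and realizes the constrained supremum, and deduces uniqueness of the maximizer from uniqueness of equilibrium states for $\langle\xi,\psi\rangle$. Your write-up merely makes explicit (coercivity of the pressure function via mixing of the extension, differentiability at the minimizer) what the paper delegates to the cited references, so there is no substantive difference in method.
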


In order to prove the Theorem \ref{the fourth result of main theorems}, we consider the relations between random Perron-Frobenius operator $\log {\rm spr}_{\mathcal{H}}(\mathcal{L}_{\tilde{\varphi}}^{\omega})$ of random group extension and the relative Gurevi\v{c} pressure $\Pi_{Gur}^{(r)}(f,\varphi)$ of random countable markov shifts. Then we consider the following question:

\begin{question}\label{problem of proofing with operator and pressure}
For a general case, it can be proved that $\log {\rm spr}_{\mathcal{H}}(\mathcal{L}_{\tilde{\varphi}}^{\omega})\leq\Pi_{Gur}^{(r)}(f,\varphi)$ for $\mathbb{P}$-a.e. $\omega\in \Omega$. It is natural to ask when the equality holds.
\end{question}

The Question \ref{problem of proofing with operator and pressure} includes the connection between the relative Gurevi\v{c} pressure of a random group extension and the relative Gurevi\v{c} pressure of random countable markov shifts. Then we consider the following question:

\begin{question}\label{problem of random group extension and random countable Markov shift}
By Remark \ref{the remark of random group extension}, it is clear that $\Pi_{Gur}^{(r)}(\mathcal{T},\tilde{\varphi})\leq \Pi_{Gur}^{(r)}(f,\varphi)$ by the definitions and it is natural to ask whether $G$ is amenable when the equality holds.
\end{question}

For the Question \ref{problem of proofing with operator and pressure}, we give the random Perron-Frobenius operator of a random group extension, and study the relation between the logarithm of the spectral radius of this random Perron-Frobenius operator and the relative Gurevi\v{c} pressure of random countable markov shifts, which the logarithm of the spectral radius of this random Perron-Frobenius operator does not coincide with the relative Gurevi\v{c} pressure of random countable markov shifts if $G$ is non-amenable.
And then we answer the Question \ref{problem of random group extension and random countable Markov shift} under the condition of random group extension with topologically mixing property.

\begin{theorem}\label{the first result of main theorems}
Let $\mathcal{T}$ be a topologically mixing random group extension of a random countable Markov shift $f$ with relative BIP property by a countable group $G$. Then we have the following results.
\begin{itemize}
  \item Suppose that $\varphi:\mathcal{E}\rightarrow\mathbb{R}$ is a locally fiber H\"{o}lder continuous function satisfying \eqref{the condition of locally fiber Holder continuous function}. If $G$ is non-amenable, then for $\mathbb{P}$-a.e. $\omega\in \Omega$, one has
$$\log {\rm spr}_{\mathcal{H}}(\mathcal{L}_{\tilde{\varphi}}^{\omega})<\Pi_{Gur}^{(r)}(f,\varphi).$$
  \item Suppose that $\varphi:\mathcal{E}\rightarrow\mathbb{R}$ is a locally fiber H\"{o}lder continuous function satisfying \eqref{the condition of locally fiber Holder continuous function}. Then we have $\Pi_{Gur}^{(r)}(f,\varphi)=\Pi_{Gur}^{(r)}(\mathcal{T},\tilde{\varphi})$ implies that group $G$ is amenable.
\end{itemize}
\end{theorem}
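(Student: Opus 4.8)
The plan is to reduce both bullets to a relativized Kesten--Day dichotomy for the $G$-valued random walk that the extension induces over the base. Since $f$ has the relative BIP property and $\varphi$ is locally fiber H\"older satisfying \eqref{the condition of locally fiber Holder continuous function}, I would first invoke the relative Ruelle--Perron--Frobenius theorem (Stadlbauer): it produces positive fiber eigenfunctions $h_\omega$, fiber eigenmeasures $\nu_\omega$ and a random eigenvalue $\lambda_\omega$ with $\int\log\lambda_\omega\,d\mathbb{P}=\Pi_{Gur}^{(r)}(f,\varphi)$ and $\mathbb{P}$-a.s.\ finite logarithmic distortion constants $C_\varphi(\omega)$. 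Conjugating $\mathcal{L}_{\tilde\varphi}^{\omega}$ by $h_\omega$ (lifted to be independent of the $G$-coordinate) and dividing by $\lambda_\omega$ turns the base part into a relative Markov operator; the extension operator then becomes a random walk on $G$ in the random environment $(f,\varphi)$, namely $\hat{\mathcal{L}}^{\omega}F(x,g)=\sum_{f_\omega y=x}p_\omega(y)\,F(y,g\,\psi(\omega,y))$ on the fiber Hilbert space $\mathcal{H}_\omega\simeq L^2(\nu_\omega)\otimes\ell^2(G)$. Writing $\hat{\mathcal{L}}^{(n),\omega}=\hat{\mathcal{L}}^{\vartheta^{n-1}\omega}\circ\cdots\circ\hat{\mathcal{L}}^{\omega}$, every spectral-radius statement about $\mathcal{L}_{\tilde\varphi}^{\omega}$ transfers, up to the additive constant $\int\log\lambda_\omega\,d\mathbb{P}$, to one about $\lim_n\frac1n\log\|\hat{\mathcal{L}}^{(n),\omega}\|_{\mathcal{H}}$; note $n\mapsto\log\|\hat{\mathcal{L}}^{(n),\omega}\|$ is subadditive along the cocycle, so Kingman's theorem gives an a.s.\ constant limit (integrability coming from the distortion bound).

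For the first bullet I would symmetrize: $\|\hat{\mathcal{L}}^{(n),\omega}\|^2=\|\hat{\mathcal{L}}^{(n),\omega}(\hat{\mathcal{L}}^{(n),\omega})^\ast\|$, and because the base part is normalized (Markov and $\nu$-preserving) the right-hand side is governed by the symmetrized step distribution $q_\omega=p_\omega\ast\check p_\omega$ on $G$, reducing matters to the self-adjoint convolution operator $Q$ on $\ell^2(G)$ determined by the averaged symmetric walk. Topological mixing of $\mathcal{T}$ (not merely of $f$) forces the semigroup generated by the supports of the $q_\omega$ to be all of $G$: a point in $\mathcal{E}_\omega\times\{e\}$ can be joined back to the $\{e\}$-fiber in both time directions. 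One is then in the nondegenerate symmetric setting, and the relativized Kesten--Day criterion (as used by Stadlbauer and Jaerisch, i.e.\ Day's convolution-operator criterion applied fibrewise and integrated) gives that the $\ell^2(G)$-spectral radius $\rho$ of $Q$ satisfies $\rho<1$ exactly because $G$ is non-amenable. Feeding this into the Kingman limit yields $\lim_n\frac1n\log\|\hat{\mathcal{L}}^{(n),\omega}\|\le\frac12\log\rho<0$ for $\mathbb{P}$-a.e.\ $\omega$, hence $\log\mathrm{spr}_{\mathcal{H}}(\mathcal{L}_{\tilde\varphi}^{\omega})\le\Pi_{Gur}^{(r)}(f,\varphi)+\tfrac12\log\rho<\Pi_{Gur}^{(r)}(f,\varphi)$.

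For the second bullet I would record the sandwich $\Pi_{Gur}^{(r)}(\mathcal{T},\tilde\varphi)\le\log\mathrm{spr}_{\mathcal{H}}(\mathcal{L}_{\tilde\varphi}^{\omega})\le\Pi_{Gur}^{(r)}(f,\varphi)$. The right inequality is the one already noted in Question \ref{problem of proofing with operator and pressure}. For the left one, the relative Gurevi\v c pressure of $\mathcal{T}$ is a $\limsup$ of $\frac1n\log$ of weighted return sums at a fixed fiber cylinder $[a,e]$ of $\mathcal{T}$; by Cauchy--Schwarz this is at most $\|\mathcal{L}_{\tilde\varphi}^{(n),\omega}\mathbf 1_{[a,e]}\|_{\mathcal{H}}\,\|\mathbf 1_{[a,e]}\|_{\mathcal{H}}\le\|\mathcal{L}_{\tilde\varphi}^{(n),\omega}\|_{\mathcal{H}}\,\|\mathbf 1_{[a,e]}\|_{\mathcal{H}}^2$, using relative BIP to ensure $\mathbf 1_{[a,e]}\in\mathcal{H}$ and mixing of $\mathcal{T}$ to upgrade the $\limsup$ along the (a.s.\ cofinal) set of admissible return lengths to a genuine rate comparable with the operator-norm growth. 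Consequently $\Pi_{Gur}^{(r)}(\mathcal{T},\tilde\varphi)=\Pi_{Gur}^{(r)}(f,\varphi)$ forces $\log\mathrm{spr}_{\mathcal{H}}(\mathcal{L}_{\tilde\varphi}^{\omega})=\Pi_{Gur}^{(r)}(f,\varphi)$, i.e.\ $\rho=1$ in the notation above, and the contrapositive of the first bullet --- equivalently the ``$\rho=1\Rightarrow$ amenable'' half of Kesten's theorem --- shows $G$ is amenable.

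The main obstacle I anticipate is the relativized Kesten--Day step together with the passage from non-self-adjoint random products to a single symmetric convolution operator: one must verify that the fibrewise symmetrization does not close the spectral gap, that the generated-support hypothesis genuinely follows from topological mixing of $\mathcal{T}$ rather than of $f$ alone, and that the $\mathbb{P}$-a.s.\ constancy of the Kingman limit is compatible with the $\mathbb{P}$-a.s.\ finiteness of the distortion constants $C_\varphi(\omega)$, so that the $h_\omega/\lambda_\omega$-normalization and all resulting error terms stay controlled. A secondary delicate point is the inequality $\Pi_{Gur}^{(r)}(\mathcal{T},\tilde\varphi)\le\log\mathrm{spr}_{\mathcal{H}}(\mathcal{L}_{\tilde\varphi}^{\omega})$, since the relative Gurevi\v c pressure is defined by a $\limsup$ restricted to $n$ with nonempty return set, and the mixing assumption must be exploited to compare it with the (honest) exponential growth rate of $\|\mathcal{L}_{\tilde\varphi}^{(n),\omega}\|_{\mathcal{H}}$.
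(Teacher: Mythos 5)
There is a genuine gap, and it sits exactly at the step you yourself flag as the ``main obstacle'': the reduction of the random group-extension transfer operator to a single symmetric convolution operator on $\ell^{2}(G)$ via $\|T\|^{2}=\|TT^{\ast}\|$ and $q_{\omega}=p_{\omega}\ast\check p_{\omega}$. For a group extension of a (random, countable) Markov shift the $G$-increments are not i.i.d.\ steps of a random walk: which preimages $y$ of $x$ exist, the weights $p_{\omega}(y)$, and the increments $\psi_{\omega}(y)$ are all coupled through the admissibility structure, so $\hat{\mathcal{L}}^{\omega}(\hat{\mathcal{L}}^{\omega})^{\ast}$ is not a convolution by $p_{\omega}\ast\check p_{\omega}$, and even if a single step were, the $n$-step object is a product of $n$ \emph{different} non-self-adjoint fiber operators whose group-increment law is not the $n$-fold convolution; the identity $\|T\|^{2}=\|TT^{\ast}\|$ does not iterate along such a cocycle to give $\|\hat{\mathcal{L}}^{(n),\omega}\|\lesssim\rho^{n/2}$. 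This is precisely the difficulty that forces the paper to avoid Kesten--Day here: the paper first proves only the weak inequality $\log{\rm spr}_{\mathcal{H}}(\mathcal{L}_{\tilde{\varphi}}^{\omega})\leq\Pi_{Gur}^{(r)}(f,\varphi)$ (Cases 1--2 of Theorem \ref{the part I of the first result}, via the normalization by the RPF eigenfunction and the contraction bound $\mathcal{A}_{k}(\omega)\leq 1$ of Proposition \ref{the estimation 1 of random operators} together with Lemma \ref{the description of spectral of random group extension}), and then obtains strictness by showing that \emph{equality} on a positive-measure set of $\omega$ would force amenability. That implication is established not through any symmetrized walk but by the Stadlbauer-type construction in Lemma \ref{the estimation 2 of random operators with functions} and Theorem \ref{the part 2 of the first theorem}: relative BIP and uniform convexity of $\ell^{2}(G)$ produce almost-$\mathcal{L}_{\tilde{\varphi}}^{\omega,n}$-invariant functions $R_{k}(\omega)\in\mathcal{H}_{c}(\omega)$, and the layer-cake decomposition of $R_{k}^{2}$ yields $(K,\epsilon)$-F{\o}lner sets directly; no symmetry of $\psi$ or of the shift is ever assumed, whereas your route implicitly needs hypotheses of that kind (and, as the paper notes via the Dougall--Sharp example, amenability-versus-pressure statements are genuinely sensitive to them).

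Your second bullet is closer in spirit to the paper: the sandwich $\Pi_{Gur}^{(r)}(\mathcal{T},\tilde{\varphi})\leq\log{\rm spr}_{\mathcal{H}}(\mathcal{L}_{\tilde{\varphi}}^{\omega})\leq\Pi_{Gur}^{(r)}(f,\varphi)$ is exactly what Proposition \ref{the estimation 1 of random operators} combined with Lemma \ref{the description of spectral of random group extension} delivers (the lower bound coming from the partition sums over words with $\psi_{\omega}^{k}(\alpha)=id$, estimated against $\|\mathcal{L}_{\tilde{\varphi}}^{\omega,k}{\bf 1}_{\{\omega\}\times\mathcal{E}_{\omega}\times\{id\}}\|_{\mathcal{H}_{1}}$ using the fiber Gibbs property). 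But because you deduce amenability from the contrapositive of your first bullet, the whole argument collapses onto the unproved Kesten--Day reduction; the paper instead runs the logic in the opposite direction, proving ``equality $\Rightarrow$ amenable'' first (Theorem \ref{the part 2 of the first theorem}, and Step 1 of the proof of Theorem \ref{the second result of main theorems}) and only then extracting the strict inequality of the first bullet. To repair your proposal you would need either to add symmetry-type assumptions (not available here) or to replace the symmetrization step by the almost-invariant-vector/F{\o}lner construction, at which point you have reproduced the paper's proof.
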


It is noticed that the spectral radius of the random Perron-Frobenius operator on fibers can be used to characterise amenability of $G$. Based on the part II of Theorem \ref{the first result of main theorems}, we answer Question \ref{problem of proofing with operator and pressure}. It can be proved that the logarithm of the spectral radius of this random Perron-Frobenius operator coincides with the relative Gurevi\v{c} pressure of random countable markov shifts almost everywhere if and only if $G$ is amenable.

\begin{theorem}\label{the second result of main theorems}
Let $\mathcal{T}$ be a topologically mixing random group extension of a random countable Markov shift $f$ with relative BIP property by a countable group $G$. Suppose that $\varphi:\mathcal{E}\rightarrow\mathbb{R}$ is a locally fiber H\"{o}lder continuous function with $\Pi_{Gur}^{(r)}(f,\varphi)<\infty$. Then we have
$\log {\rm spr}_{\mathcal{H}}(\mathcal{L}_{\tilde{\varphi}}^{\omega})=\Pi_{Gur}^{(r)}(f,\varphi)$ for $\mathbb{P}$-a.e. $\omega\in \Omega$ if and only if $G$ is amenable.
\end{theorem}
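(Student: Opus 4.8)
The plan is to treat the two implications separately; the non-trivial content is the ``if'' direction. The ``only if'' direction is immediate from the first part of Theorem~\ref{the first result of main theorems}: if $G$ is \emph{not} amenable, then $\log{\rm spr}_{\mathcal{H}}(\mathcal{L}_{\tilde{\varphi}}^{\omega})<\Pi_{Gur}^{(r)}(f,\varphi)$ for $\mathbb{P}$-a.e.\ $\omega$, so the hypothesised equality cannot hold; hence equality a.e.\ forces $G$ to be amenable, and nothing further is required here.

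For the ``if'' direction, assume $G$ is amenable. Since we already know $\log{\rm spr}_{\mathcal{H}}(\mathcal{L}_{\tilde{\varphi}}^{\omega})\le\Pi_{Gur}^{(r)}(f,\varphi)$ for $\mathbb{P}$-a.e.\ $\omega$ (the inequality recorded in Question~\ref{problem of proofing with operator and pressure}), it suffices to establish the reverse inequality a.e. The idea is a random, transfer-operator version of Kesten's argument: amenability supplies almost-invariant vectors in the $G$-direction with which one can ``see'' the full base spectral radius inside the extension. First, since $f$ has the relative BIP property and $\Pi_{Gur}^{(r)}(f,\varphi)<\infty$, the relative Ruelle--Perron--Frobenius theorem for the base (Stadlbauer) yields a positive random variable $\lambda_{\omega}$ and a positive fiber eigenfunction $h_{\omega}$ of $\mathcal{L}_{\varphi}^{\omega}$, bounded away from $0$ and $\infty$ on each fiber, with $\int\log\lambda_{\omega}\,d\mathbb{P}=\Pi_{Gur}^{(r)}(f,\varphi)$; by Birkhoff's theorem $\frac1n\log\prod_{i=0}^{n-1}\lambda_{\vartheta^{i}\omega}\to\Pi_{Gur}^{(r)}(f,\varphi)$ a.e. Second, I would reduce to a finite random subsystem: approximate $\Pi_{Gur}^{(r)}(f,\varphi)$ from below by the relative Gurevi\v{c} pressures of the subshifts obtained by restricting to a finite sub-alphabet $A$; on such a subsystem the cocycle $\psi$ defining $\mathcal{T}$ takes only finitely many values, so after $n$ steps the $G$-displacement lies in a fixed finite set $F_{n}\subset G$.

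Now fix such a finite subsystem and a (one-sided, in the convention matching $\psi$) F{\o}lner sequence $\{E_{N}\}_{N}$ in $G$. I would use test vectors in $\mathcal{H}$ of product form, roughly $u_{N}^{\omega}(x,g)=h_{\omega}(x)\,\mathds{1}_{E_{N}}(g)$ restricted to the chosen sub-alphabet, and estimate $\|\mathcal{L}_{\tilde{\varphi}}^{\vartheta^{n-1}\omega}\circ\cdots\circ\mathcal{L}_{\tilde{\varphi}}^{\omega}u_{N}^{\omega}\|_{\mathcal{H}}$ from below. All summands in $\mathcal{L}_{\tilde{\varphi}}^{\omega,n}u_{N}^{\omega}$ are non-negative (positivity of $h$ and of $e^{S_{n}\varphi}$), so no cancellation occurs; the only loss compared with the base computation comes from group coordinates pushed across $\partial E_{N}$, and $|E_{N}\,\triangle\, gE_{N}|/|E_{N}|\to0$ uniformly over the finite set $g\in F_{n}$. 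This gives $\|\mathcal{L}_{\tilde{\varphi}}^{\omega,n}u_{N}^{\omega}\|_{\mathcal{H}}\ge (1-\varepsilon_{N,n})\bigl(\prod_{i=0}^{n-1}\lambda_{\vartheta^{i}\omega}\bigr)\|u_{N}^{\omega}\|_{\mathcal{H}}$ with $\varepsilon_{N,n}\to0$ as $N\to\infty$ for each fixed $n$ (using the two-sided bounds on $h$ over the subsystem). Letting $N\to\infty$, then taking $\frac1n\log$ and $n\to\infty$, and finally exhausting the alphabet, would yield $\log{\rm spr}_{\mathcal{H}}(\mathcal{L}_{\tilde{\varphi}}^{\omega})\ge\Pi_{Gur}^{(r)}(f,\varphi)$ for $\mathbb{P}$-a.e.\ $\omega$; together with the reverse inequality this proves the theorem.

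I expect the main obstacle to be the bookkeeping of the three limits ($N\to\infty$, $n\to\infty$, and the finite-subsystem exhaustion) in the fibered setting, and in particular making the error term $\varepsilon_{N,n}$ provably vanish: this requires the displacement set $F_{n}$ to be genuinely finite (hence the restriction to a finite sub-alphabet, together with a relative-pressure approximation argument), the F{\o}lner condition to be applied to $F_{n}$ rather than to a generating set, and uniform control of $h_{\omega}$ on the subsystem coming from relative BIP and the relative RPF theorem. A secondary point is to match the normalisation of $\log{\rm spr}_{\mathcal{H}}(\mathcal{L}_{\tilde{\varphi}}^{\omega})$ (fiberwise versus the a.e.-constant Kingman limit of $\frac1n\log\|\mathcal{L}_{\tilde{\varphi}}^{\omega,n}\|_{\mathcal{H}}$) with the inequality we take as given, and to check that the argument uses only positivity and the F{\o}lner property, with no appeal to weak symmetry of $\varphi$, consistent with the Dougall--Sharp framework.
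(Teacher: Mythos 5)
Your overall plan is sound, but for the ``if'' direction it follows a genuinely different route from the paper. After the same normalisation step (using the relative RPF theorem under relative BIP and $\Pi_{Gur}^{(r)}(f,\varphi)<\infty$ to reduce to $\mathcal{L}_{\varphi}^{\omega}(\mathbf{1})=\mathbf{1}$), the paper does not run a hands-on Kesten/F{\o}lner computation with product test vectors. Instead it observes (Step 2 of its proof, together with Lemma~\ref{the description of spectral of random group extension}) that the averaged operator $T_{n}^{\omega}=\mathcal{M}_{\vartheta^{n}\omega}\mathcal{L}_{\tilde{\varphi}}^{\omega,n}|_{\mathcal{H}_{c}(\omega)}$ is exactly the convolution (Markov) operator on $G$ associated with the distribution $g\mapsto\mu_{\omega}\left(\left\{x:\psi_{\omega}^{n}(x_{0},\ldots,x_{n-1})=g\right\}\right)$, and then quotes Day's theorem \cite{Day1964Convolutions} to get $\|T_{n}^{\omega}\|_{\mathcal{H}_{\infty}(\vartheta^{n}\omega)}=1$ for amenable $G$; Lemma~\ref{the description of spectral of random group extension}(3) then gives ${\rm spr}_{\mathcal{H}}(\mathcal{L}_{\tilde{\varphi}}^{\omega})=1$ a.e. The payoff of the paper's route is that Day's theorem applies to an \emph{arbitrary} probability distribution on $G$, so no finite-support reduction is needed; your route buys self-containedness (no appeal to Day) at the cost of the step you yourself flag as delicate, and that step is where your sketch has a real gap: the reduction to a finite random sub-alphabet requires (i) an approximation theorem for the relative Gurevi\v{c} pressure of random countable Markov shifts by finite random subsystems, and (ii) an RPF eigenfunction for each such subsystem (the eigenfunction $h$ of the full system no longer satisfies the eigen-equation after restriction, so the clean identity $\mathcal{L}_{\varphi}^{\omega,n}h=\Lambda_{n}(\omega)h\circ\Theta^{n}$ you implicitly use breaks). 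Neither is available in the paper, and in the fibered setting both are nontrivial; a lighter fix, closer in spirit to Day's proof, would be to keep the full alphabet and truncate the displacement distribution of $\psi_{\omega}^{n}$ under $\mu_{\omega}$ to a finite set carrying mass $1-\delta$, which avoids subsystem thermodynamics altogether. Your order of limits is otherwise fine, since for each fixed $n$ the operator norm is a supremum over test functions, so you may let $N\to\infty$ first.

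For the ``only if'' direction, your contrapositive appeal to part I of Theorem~\ref{the first result of main theorems} is formally legitimate, but note that the paper's proof of that part (Theorem~\ref{the part I of the first result}) itself defers to ``Step 1'' of the proof of Theorem~\ref{the second result of main theorems}: the actual content is proved directly there, namely that equality (even on a set of positive measure) forces $\limsup_{n}\left(\mathcal{A}_{n}(\omega)\right)^{1/n}=1$ via Lemma~\ref{the description of spectral of random group extension}, and then amenability via the F{\o}lner-set construction of Proposition~\ref{the estimation 1 of random operators}, Lemma~\ref{the estimation 2 of random operators with functions} and Theorem~\ref{the part 2 of the first theorem}. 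So a self-contained proof of this theorem should reproduce that argument rather than cite the strict-inequality statement, whose proof in the paper points back to this very step.
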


Theorem \ref{the first result of main theorems} and Theorem \ref{the second result of main theorems} also hold in the case of random shifts of finite type. By taking $\varphi=0$, then we have the following statement.

\begin{corollary}
Let $\mathcal{T}$ be a topologically mixing random group extension of a random shift of finite type $f$ by a countable group $G$. Then we have
$\log {\rm spr}_{\mathcal{H}}(\mathcal{L}_{\tilde{0}}^{\omega})=h_{Gur}^{(r)}(f)$ for $\mathbb{P}$-a.e. $\omega\in \Omega$ if and only if $G$ is amenable.
\end{corollary}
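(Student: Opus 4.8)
The plan is to obtain this as a direct specialization of Theorem \ref{the second result of main theorems} to the potential $\varphi = 0$. First I would observe that a random shift of finite type is exactly a random countable Markov shift whose fiber alphabet is finite, and in that case the relative BIP property is automatic: the bounded-image requirement is vacuous over a finite alphabet, and topological mixing supplies the remaining transitivity condition. Hence the structural hypothesis on the base system $f$ in Theorem \ref{the second result of main theorems} is met without any extra assumptions.

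Next I would verify the hypotheses on the potential. The zero function is locally fiber H\"older continuous and satisfies \eqref{the condition of locally fiber Holder continuous function} trivially, and its lift $\tilde 0$ to the random group extension $\mathcal{T}$ is again the zero function, which belongs to the Banach space $\mathcal{H}$ on each fiber. Moreover, by the very definition of the relative Gurevi\v{c} entropy one has $\Pi_{Gur}^{(r)}(f,0) = h_{Gur}^{(r)}(f)$, and this quantity is finite: the number of admissible words of any given length over a fixed fiber is bounded by a power of the (finite) number of states, so $h_{Gur}^{(r)}(f)$ does not exceed the logarithm of the number of states. Thus the standing finiteness assumption $\Pi_{Gur}^{(r)}(f,\varphi) < \infty$ of Theorem \ref{the second result of main theorems} is in force.

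With these checks done, applying Theorem \ref{the second result of main theorems} with $\varphi = 0$ gives that $\log {\rm spr}_{\mathcal{H}}(\mathcal{L}_{\tilde 0}^{\omega}) = \Pi_{Gur}^{(r)}(f,0) = h_{Gur}^{(r)}(f)$ for $\mathbb{P}$-a.e. $\omega \in \Omega$ if and only if $G$ is amenable, which is exactly the assertion. There is no genuine obstacle here; the only points requiring a sentence of justification are the two verifications above — that a finite fiber alphabet forces both the relative BIP property and the finiteness of $h_{Gur}^{(r)}(f)$ — and both are routine. As a remark, one could instead avoid invoking the BIP-dependent statement altogether by combining part II of Theorem \ref{the first result of main theorems}, which (as noted in the excerpt) also holds for random shifts of finite type, with the elementary comparison $h_{Gur}^{(r)}(\mathcal{T}) \le h_{Gur}^{(r)}(f)$ and Theorem \ref{the third result of main theorems}; but routing the argument through Theorem \ref{the second result of main theorems} is the most economical.
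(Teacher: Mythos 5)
Your proposal is correct and follows essentially the same route as the paper, which obtains the corollary by specializing Theorem \ref{the second result of main theorems} to $\varphi=0$ after noting (citing Stadlbauer's Remark 4.3) that a topologically mixing random shift of finite type automatically has the relative BIP property and finite relative Gurevi\v{c} pressure. Your explicit verifications of these two facts are the same routine checks the paper delegates to that reference.
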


The paper is organized as follows. In Section \ref{Preliminaries}, we introduce the notions of random countable Markov shifts, relative Gurevi\v{c} pressure, random group extensions and group extensions of random shifts of finite type. In Section \ref{Random group extensions and its Perron-Frobenius operator}, we give the proof of the part I of Theorem \ref{the first result of main theorems}, that is, Theorem \ref{the part I of the first result}. In Section \ref{Random group extensions by amenable groups}, we give the proof of the part II of Theorem \ref{the first result of main theorems}, that is Theorem \ref{the part 2 of the first theorem}. In Section \ref{Amenability and random Perron-Frobenius operator}, we give the proof of Theorem \ref{the second result of main theorems}. In Section \ref{Relative Gurevic pressure for amenable group extensions}, we give the proof of Theorem \ref{the third result of main theorems}. In Section \ref{Relative Gurevic entropy of random group extensions}, we give the proof of Theorem \ref{the fourth result of main theorems}.

\section{Preliminaries}\label{Preliminaries}

\subsection{Random countable Markov shifts}

Let $(\Omega,\mathcal{F},\mathbb{P})$ be a probability space and $\vartheta:\Omega\rightarrow\Omega$ be a $\mathbb{P}$-preserving ergodic invertible transformation. Our setting consists of a $\mathbb{N}\cup \{\infty\}$-valued random variable $l=l(\omega)>1$, the sets $S(\omega)=\{j\in\mathbb{N}:j<l(\omega)\}$ and measurably depending on $\omega\in \Omega$ matrices $$A_{\omega}=\left(\alpha_{i,j}(\omega),i\in S(\omega),j\in S(\vartheta\omega)\right)$$ with entries $\alpha_{i,j}(\omega)\in \{0,1\}$. Define
\begin{align*}
\mathcal{E}_{\omega}=\left\{x=(x_{0},x_{1},x_{2},\ldots):\alpha_{x_{i},x_{i+1}}(\vartheta^{i}\omega)=1 \ \text{for any} \ i=0,1,2\ldots\right\},
\end{align*}
which we assume to be nonempty for $\mathbb{P}$-a.e. $\omega\in\Omega$, together with the left shifts $f_{\omega}:\mathcal{E}_{\omega}\rightarrow\mathcal{E}_{\vartheta\omega}$ by $(x_{0},x_{1},x_{2},\ldots)\mapsto(x_{1},x_{2},x_{3},\ldots)$. Denote by $\Theta$ the skew product transformation acting on a measurable space $$\mathcal{E}=\{(\omega,x):x\in \mathcal{E}_{\omega},\omega\in \Omega\}$$ by $$\Theta:\mathcal{E}\rightarrow\mathcal{E}, \ \ (\omega,x)\mapsto (\vartheta\omega,f_{\omega}x).$$ Then $f:=(f_{\omega})_{\omega\in\Omega}$ is called a \emph{random countable Markov shift}. For $n\in\mathbb{N}$, we set $f_{\omega}^{n}=f_{\vartheta^{n-1}\omega}\circ f_{\vartheta^{n-2}\omega}\circ\cdots f_{\vartheta\omega}\circ f_{\omega}$ and $\Theta^{n}(\omega,x)=(\vartheta^{n}\omega,f_{\omega}^{n}x)$. A finite word $a=(x_{0},x_{1},\ldots,x_{n-1})\in\mathbb{N}^{n}$ of length $n$ is called $\omega$-admissible if for any $i=0,1,\ldots,n-1$, one has $x_{i}<l(\vartheta^{i}\omega)$ and $\alpha_{x_{i}x_{i+1}}(\vartheta^{i}\omega)=1$. Let $\mathcal{W}^{n}(\omega)$ denote the set of $\omega$-admissible words of length $n$ and $$\mathcal{W}^{\infty}(\omega)=\bigcup_{n\geq 1}\mathcal{W}^{n}(\omega).$$ For any $a\in \mathcal{W}^{\infty}(\omega)$, denote by $|a|$ the length of $a$. In particular, $\mathcal{W}^{1}(\omega)=\{a:a<l(\omega)\}$ and for any $a=(a_{0},a_{1},\ldots,a_{n-1})\in\mathbb{N}^{n}$, the cylinder set is defined by
\begin{align*}
[a]_{\omega}=[a_{0},a_{1},\cdots,a_{n-1}]_{\omega}:=\left\{x\in \mathcal{E}_{\omega}:x_{i}=a_{i},i=0,1,\ldots,n-1\right\}.
\end{align*}
Let $$\Omega_{a}:=\{\omega:[a]_{\omega}\neq \emptyset\}=\{\omega:a\in \mathcal{W}^{n}(\omega)\}.$$ The set $\mathcal{W}^{n}$ is defined as the set of words $a$ of length $n$ with $\mathbb{P}(\Omega_{a})>0$, and $\mathcal{W}^{\infty}=\bigcup_{n\geq 1}\mathcal{W}^{n}$. For any $a\in\mathcal{W}^{\infty}(\omega), b\in\mathcal{W}^{\infty}(\vartheta^{n}\omega)$ with $n\geq |a|$, set
\begin{align*}
\mathcal{W}^{n}_{a,b}(\omega)=\left\{(x_{0},\ldots,x_{n-1})\in\mathcal{W}^{n}(\omega):(x_{0},\ldots,x_{|a|})=a,x_{n-1}b \ \text{is} \ \vartheta^{n-1}\omega\text{-admissible}\right\}.
\end{align*}
In the following, we introduce some dynamical properties of random dynamical systems. In this paper, we mainly consider the topologically mixing and relative big images and preimages property, which were discussed in \cite{Denker2008Thermodynamic,Kifer2008Thermodynamic,Stadlbauer2010On,Stadlbauer2017Coupling} respectively.
\begin{itemize}
  \item \emph{Topologically mixing}: for all $a,b\in \mathcal{W}^{1}$, there exists $N_{ab}\in \mathbb{N}$ such that if for $\mathbb{P}$-a.e. $\omega\in \Omega$, $n\geq N_{ab}$, $a\in S(\omega)$ and $b\in S(\vartheta^{n}\omega)$, then $\mathcal{W}^{n}_{a,b}(\omega)\neq\emptyset$.
  \item \emph{Relative big images and preimages property}: if there exists a family $\{\mathcal{I}_{bip}(\omega),\omega\in \Omega\}$ where $\mathcal{I}_{bip}(\omega)\subset \mathcal{W}^{1}(\omega)$ such that for $\mathbb{P}$-a.e. $\omega\in\Omega$, and for any $a\in\mathcal{W}^{1}(\omega)$, there exists $b\in \mathcal{I}_{bip}(\vartheta\omega)$ with $ab\in\mathcal{W}^{2}(\omega)$ or $c\in \mathcal{I}_{bip}(\vartheta^{-1}\omega)$ with $ca\in\mathcal{W}^{2}(\vartheta^{-1}\omega)$, we say that $f$ has relative big images and preimages (BIP) property.
\end{itemize}
Moreover, if $f$ is topologically mixing, then for the above words $a$ and $b$, the $ab$-element $\alpha_{ab}^{(n)}(\omega)$ of the matrix $A_{\omega}^{n}=A_{\vartheta^{n-1}\omega}A_{\vartheta^{n-2}\omega}\cdots A_{\vartheta\omega}A_{\omega}$ is positive. In this paper, we always assume that $f$ is topologically mixing, and $f$ has relative BIP property if $f$ is topologically mixing and $f$ has relative big images and preimages property.

\subsection{Relative Gurevi\v{c} pressure}

For a function $\varphi:\mathcal{E}\rightarrow\mathbb{R}$, set
\begin{align*}
V_{n}^{\omega}(\varphi):=\sup\left\{|\varphi(\omega,x)-\varphi(\omega,y)|:x_{i}=y_{i},i=0,1,\ldots,n-1\right\}.
\end{align*}
We say that $\varphi$ is locally fiber H\"{o}lder continuous if there exists a random variable $\kappa:\Omega\rightarrow\mathbb{N}$ where $\kappa(\omega)\geq 1$ with $\int_{\Omega} \kappa(\omega)d\mathbb{P}(\omega)<\infty$ such that for $\mathbb{P}$-a.e. $\omega\in \Omega$ and any $n\geq 1$, $V_{n}^{\omega}(\varphi)\leq \kappa(\omega)\cdot2^{-n}$. Let $\varphi:\mathcal{E}\rightarrow\mathbb{R}$ be a locally fiber H\"{o}lder continuous function. For $a,b\in\mathcal{W}^{1}$, $\omega\in \Omega_{a}$, $n\in \mathbb{N}$ and $\vartheta^{n}\omega\in \Omega_{b}$, the \emph{$n$-th random partition function} is defined by
\begin{align*}
Z_{n}^{\omega}(\varphi,a,b):=\sum_{\alpha\in\mathcal{W}^{n}_{a,b}(\omega)}\exp\left(\sup_{y\in [\alpha]_{\omega}}\sum_{i=0}^{n-1}\varphi\circ\Theta^{i}(\omega,y)\right),
\end{align*}
where we use the convention that $Z_{n}^{\omega}(\varphi,a,b)=0$ if $\mathcal{W}^{n}_{ab}(\omega)=\emptyset$. Define
\begin{align*}
n_{a}^{(0)}(\omega)=0
\end{align*}
and inductively
\begin{align*}
n_{a}^{(k+1)}(\omega)=\min\left\{n>n_{a}^{(k)}(\omega):\vartheta^{n}\omega\in \Omega_{a}\right\}.
\end{align*}
Notice that the function $n_{a}^{(k)}(\omega)$ is measurable in $\Omega$ since for any $\beta>0$, we have
\begin{align*}
\left\{\omega\in \Omega:n_{a}^{(1)}(\omega)<\beta\right\}=\bigcup_{n\in\mathbb{N}, \ n<\beta}\{\omega\in \Omega:\vartheta^{n}\omega\in \Omega_{a}\}\in\mathcal{F},
\end{align*}
and inductively
\begin{align*}
\left\{\omega\in \Omega:n_{a}^{(k+1)}(\omega)<\beta\right\}&=\left\{\omega\in \Omega:\exists \ n>n_{a}^{(k)}(\omega),\vartheta^{n}\omega\in \Omega_{a} \ \text{such that} \ n<\beta\right\}\\&=\bigcup_{n\in\mathbb{N}, \ n<\beta}\left\{\omega:n>n_{a}^{(k)}(\omega)\right\}\cap\{\omega\in \Omega:\vartheta^{n}\omega\in \Omega_{a}\}\\&\in\mathcal{F}.
\end{align*}
By the Kac lemma \cite{Cornfeld1982Ergodic}, we have
\begin{align*}
\int_{\Omega_{a}}n_{a}^{(1)}(\omega)d\mathbb{P}(\omega)=1,
\end{align*}
and then $n_{a}^{(1)}(\omega)<\infty$ for $\mathbb{P}_{a}$-a.e. $\omega\in \Omega_{a}$ where $\mathbb{P}_{a}$ is the normalized restriction of $\mathbb{P}$ on $\Omega_{a}$.
For each measurable in $(\omega,x)$ function $\phi$ on $\mathcal{E}$, the \emph{random Perron-Frobenius operator} with respect to $f$ is denoted by
\begin{align*}
\mathcal{L}_{\varphi}^{\omega}\phi(\vartheta\omega,x):&=\sum_{f_{\omega}(y)=x, \ y\in \mathcal{E}_{\omega}}\exp(\varphi(\omega,y))\cdot \phi(\omega,y).
\end{align*}
For any $n\in\mathbb{N}$, we define
\begin{align*}
\mathcal{L}_{\varphi}^{\omega,n}\phi(\vartheta^{n}\omega,x)&:=\mathcal{L}_{\varphi}^{\vartheta^{n-1}\omega}\circ \mathcal{L}_{\varphi}^{\vartheta^{n-2}\omega}\circ\cdots\circ \mathcal{L}_{\varphi}^{\vartheta\omega}\circ\mathcal{L}_{\varphi}^{\omega}\phi(\vartheta^{n}\omega,x)\\&=\sum_{f_{\omega}^{n}(y)=x, \ y\in\mathcal{E}_{\omega}}\exp\left(\sum_{i=0}^{n-1}\varphi\circ\Theta^{i}(\omega,y)\right)\cdot \phi(\omega,y).
\end{align*}

Define the measurable function with respect to $\omega\in \Omega$ as  $$B_{1}^{\omega}(\varphi):=\exp\left(\sum_{k\geq 1}V_{k+1}^{\vartheta^{-k}\omega}(\varphi)\right).$$
Since $B_{1}^{\omega}(\varphi)$ is measurable in $\omega\in\Omega$ and
\begin{align*}
\sum_{k\geq 1}V_{k+1}^{\vartheta^{-k}\omega}(\varphi)\leq \sum_{k\geq 1}\kappa(\vartheta^{-k}\omega)\cdot 2^{-(k+1)},
\end{align*}
by using the condition of $\int_{\Omega}\kappa(\omega)d\mathbb{P}(\omega)<\infty$ we have
$$\int_{\Omega}\log B_{1}^{\omega}(\varphi)d\mathbb{P}(\omega)<\infty.$$
In the following, we introduce the definition of relative Gurevi\v{c} pressure of a topologically mixing random countable Markov shift in \cite[Theorem 3.2]{Denker2008Thermodynamic}.
\begin{definition}\label{relative Gurevic pressure of random countable Markov shifts}
Let $f$ be a topologically mixing random countable Markov shift. If $\varphi$ is a locally fiber H\"{o}lder continuous function such that
\begin{align}\label{the condition of locally fiber Holder continuous function}
\int_{\Omega}\|\varphi(\omega,\cdot)\|_{\infty}d\mathbb{P}(\omega)<\infty \ \ \ \text{and} \ \ \ \int_{\Omega}\left|\log\|\mathcal{L}_{\varphi}^{\omega}{\bf 1}\|_{\infty}\right|d\mathbb{P}(\omega)<\infty.
\end{align}
Then the relative (fiber) Gurevi\v{c} pressure of the topologically mixing random countable Markov shift is defined by
\begin{align*}
\Pi_{Gur}^{(r)}(f,\varphi):=\int_{\Omega}\lim_{j\rightarrow\infty}\frac{1}{n_{a}^{(j)}(\omega)}\log Z_{n_{a}^{(j)}(\omega)}^{\omega}(\varphi,a,a)d\mathbb{P}(\omega),
\end{align*}
where the limit exists and it is constant for $\mathbb{P}_{a}$-a.e. $\omega\in \Omega_{a}$. Moreover, the limit is the same if we replace $a$ by any $b\in\mathcal{W}^{1}$. Moreover, for any $a,b\in\mathcal{W}^{1}$, one has
\begin{align*}
\Pi_{Gur}^{(r)}(f,\varphi)=\int_{\Omega}\lim_{j\rightarrow\infty}\frac{1}{n_{b}^{(j)}(\omega)}\log Z_{n_{b}^{(j)}(\omega)}^{\omega}(\varphi,a,b)d\mathbb{P}(\omega).
\end{align*}
\end{definition}
For the case of $\varphi=0$, the $h_{Gur}^{(r)}(f):=\Pi_{Gur}^{(r)}(f,0)$ is called relative Gurevi\v{c} entropy of the topologically mixing random countable Markov shift $f$.

\subsection{Random group extensions}

Let $f$ be a topologically mixing random countable Markov shift and $\varphi$ be a locally fiber H\"{o}lder continuous function such that
\begin{align*}
\int_{\Omega}\|\varphi(\omega,\cdot)\|_{\infty}d\mathbb{P}(\omega)<\infty \  \ \ \text{and} \ \ \ \int_{\Omega}\left|\log\|\mathcal{L}_{\varphi}^{\omega}1\|_{\infty}\right|d\mathbb{P}(\omega)<\infty.
\end{align*}
For a countable group $G$ and defining in $(\omega,x)$ map $\Psi:\mathcal{E}\rightarrow G, (\omega,x)\mapsto \psi_{\omega}(x)$ where $\psi_{\omega}:\mathcal{E}_{\omega}\rightarrow G$, $x\mapsto \psi_{\omega}(x)$ is induced by a map $\psi:\mathbb{N}^{\mathbb{N}}\rightarrow G$, that is $\psi_{\omega}$ is just the restriction of the action $\psi$ over $\mathcal{E}_{\omega}$ for $\mathbb{P}$-a.e. $\omega\in\Omega$ and $\psi$ only depends on one co-ordinate $x_{0}$ (in particular we set $\psi_{\omega}(x):=\psi_{\omega}(x_{0}):=\psi_{\omega}(x_{0},x_{1},\ldots,x_{n})$ for any $n\in\mathbb{N}$ where $x=(x_{0},x_{1},x_{2},\ldots)$), the \emph{random group extension or (random skew product extension)} $\mathcal{T}:=(\mathcal{T}_{\omega})_{\omega\in\Omega}$ is defined as follows:
\begin{align*}
\mathcal{T}_{\omega}:\mathcal{E}_{\omega}\times G\rightarrow\mathcal{E}_{\vartheta\omega}\times G, \ (x,g)\mapsto (f_{\omega}x,g\cdot\psi_{\omega}(x)), \ \omega\in \Omega,
\end{align*}
and the skew product transformation
\begin{align*}
\Theta_{\psi}:\mathcal{E}\times G\rightarrow\mathcal{E}\times G, \ (\omega,x,g)\mapsto (\vartheta\omega,f_{\omega}x,g\cdot\psi_{\omega}(x)).
\end{align*}

Denote by
\begin{align*}
\psi_{\omega}^{n}(x):=\psi_{\omega}(x)\cdot\psi_{\vartheta\omega}(f_{\omega}x)\cdot\cdots\cdot\psi_{\vartheta^{n-1}\omega}(f_{\omega}^{n-1}x)
\end{align*}
for any $n\geq 1$ and $x\in \mathcal{E}_{\omega}$. Note that $\mathcal{T}$ is a random countable Markov shift with measurably depending on $\omega\in \Omega$ matrices $$\tilde{A}_{\omega}=\left(\tilde{\alpha}_{(i,g),(j,h)}(\omega),(i,g)\in S(\omega)\times G,(j,h)\in S(\vartheta\omega)\times G\right)$$ with entries $\tilde{\alpha}_{(i,g),(j,h)}(\omega)\in \{0,1\}$ where $\tilde{\alpha}_{(i,g),(j,h)}(\omega)=1$ if $\alpha_{i,j}(\omega)=1$ and $\psi_{\omega}(i,j)=g^{-1}h$ and $\tilde{\alpha}_{(i,g),(j,h)}(\omega)=0$ otherwise when defining a measurable set
\begin{align*}
\tilde{\mathcal{E}}_{\omega}=\left\{\tilde{x}=((x_{0},g_{0}),(x_{1},g_{1}),\ldots):\tilde{\alpha}_{(x_{i},g_{i}),(x_{i+1},g_{i+1})}(\vartheta^{i}\omega)=1 \ \text{for any} \ i=0,1,\ldots\right\}
\end{align*}
with the left shifts $\tilde{f}_{\omega}:\tilde{\mathcal{E}}_{\omega}\rightarrow\tilde{\mathcal{E}}_{\vartheta\omega}$ by $((x_{0},g_{0}),(x_{1},g_{1}),\ldots)\mapsto((x_{1},g_{1}),(x_{2},g_{2}),\ldots)$.
By the definition of topologically mixing property, it follows that $\mathcal{T}$ is a topologically mixing random countable Markov shift if and only if for all $a,b\in \mathcal{W}^{1}$ and $g\in G$, there exists $N>0$ (depending on $a,b,g$) such that if for $\mathbb{P}$-a.e. $\omega\in \Omega$, $n\geq N$, $a\in S(\omega)$ and $b\in S(\vartheta^{n}\omega)$, then $g\in G_{a,b}^{n}(\omega)$ where $G_{a,b}^{n}(\omega)$ is defined by
\begin{align*} G_{a,b}^{n}(\omega):=\left\{\psi_{\omega}^{n}(\alpha):n\in\mathbb{N},\alpha\in\mathcal{W}^{n}(\omega),[a]_{\omega}\supset[\alpha]_{\omega},f_{\omega}^{n}([\alpha]_{\omega})\supset[b]_{\vartheta^{n}\omega}\right\}.
\end{align*}
Suppose that $\varphi$ is a locally fiber H\"{o}lder continuous function such that
\begin{align*}
\int_{\Omega}\|\varphi(\omega,\cdot)\|_{\infty}d\mathbb{P}(\omega)<\infty \ \ \ \text{and} \ \ \ \int_{\Omega}\left|\log\|\mathcal{L}_{\varphi}^{\omega}1\|_{\infty}\right|d\mathbb{P}(\omega)<\infty.
\end{align*}
Define $\tilde{\varphi}:\mathcal{E}\times G\rightarrow \mathbb{R},\tilde{\varphi}(\omega,x,g)=\varphi(\omega,x)$. Let $\pi_{1,2}:\mathcal{E}\times G\rightarrow \mathcal{E}$ denote the canonical projection. Then we have $\tilde{\varphi}=\varphi\circ \pi_{1,2}$. For a measurable function $v:\mathcal{E}\times G\rightarrow \mathbb{R}$, given $\omega\in \Omega$, the random Perron-Frobenius operator $\mathcal{L}_{\tilde{\varphi}}^{\omega}$ with respect to $\mathcal{T}$ is given by
\begin{align*}
\mathcal{L}_{\tilde{\varphi}}^{\omega}v(\vartheta\omega,x,g):&=\mathcal{L}_{\varphi\circ\pi_{1,2}}^{\omega}v(\vartheta\omega,x,g)\\&=\sum_{\mathcal{T}_{\omega}(y,h)=(x,g), \ (y,h)\in\mathcal{E}_{\omega}\times G}\exp(\varphi\circ\pi_{1,2}(\omega,y,h))\cdot v(\omega,y,h).
\end{align*}
If $\mathcal{T}$ is a topologically mixing random countable Markov shift, then the \emph{relative (fiber) Gurevi\v{c} pressure of the random group extension} is defined by
\begin{align}\label{the limit of the random group-extended Markov system}
\Pi_{Gur}^{(r)}(\mathcal{T},\tilde{\varphi}):=\int_{\Omega}\lim_{j\rightarrow\infty}\frac{1}{n_{a}^{(j)}(\omega)}\log \tilde{Z}_{n_{a}^{(j)}(\omega)}^{\omega}(\tilde{\varphi},a,a)d\mathbb{P}(\omega),
\end{align}
where
\begin{align*}
\tilde{Z}_{n}^{\omega}(\tilde{\varphi},a,a):=\sum_{\alpha\in\mathcal{W}^{n}_{a,a}(\omega), \ \psi_{\omega}^{n}(\alpha)=id}\exp\left(\sup_{y\in [\alpha]_{\omega}}\sum_{i=0}^{n-1}\tilde{\varphi}\circ(\Theta_{\psi})^{i}(\omega,y,id)\right).
\end{align*}

For the case of $\varphi=0$, the $h_{Gur}^{(r)}(\mathcal{T}):=\Pi_{Gur}^{(r)}(\mathcal{T},0)$ is called relative Gurevi\v{c} entropy of the random group $G$ extension.

\begin{remark}\label{the remark of random group extension}
For a random group extension with a random countable Markov shift, by \cite[Theorem 3.2]{Denker2008Thermodynamic} we have the following results.
\begin{itemize}
  \item[(1)] For a topologically mixing random group extension, the limit in \eqref{the limit of the random group-extended Markov system} exists and it is constant for $\mathbb{P}_{a}$-a.e. $\omega\in \Omega_{a}$. Moreover, the limit is the same if we replace $a$ by any $b\in\mathcal{W}^{1}$.
  \item[(2)] By the definitions of random group extension and random countable Markov shift, we have
  $\Pi_{Gur}^{(r)}(\mathcal{T},\tilde{\varphi})\leq \Pi_{Gur}^{(r)}(f,\varphi)$.
\end{itemize}
\end{remark}

Let $f$ be a topologically mixing random countable Markov shift and $\varphi$ be a locally fiber H\"{o}lder continuous function such that
\begin{align*}
\int_{\Omega}\|\varphi(\omega,\cdot)\|_{\infty}d\mathbb{P}(\omega)<\infty \ \ \text{and} \ \ \int_{\Omega}\left|\log\|\mathcal{L}_{\varphi}^{\omega}1\|_{\infty}\right|d\mathbb{P}(\omega)<\infty.
\end{align*}
If $f$ has relative BIP property and $\Pi_{Gur}^{(r)}(f,\varphi)<\infty$, by \cite[Remark 4.2]{Stadlbauer2010On} there exists a unique $\Theta$-invariant fiber (or relative) Gibbs measure $\mu$ which is defined by $d\mu(\omega,x)=d\mu_{\omega}(x)d\mathbb{P}(\omega)$ and a random variable $\lambda:\Omega\rightarrow(0,+\infty)$ satisfying $\int\log\lambda_{\omega}d\mathbb{P}(\omega)=\Pi_{Gur}^{(r)}(f,\varphi)$ such that there exists some random variable $C_{\varphi}:\Omega\rightarrow (0,+\infty)$ satisfying
\begin{align*}
\int_{\Omega} |\log C_{\varphi}(\omega)|d\mathbb{P}(\omega)<\infty
\end{align*}
such that
\begin{align*}
C_{\varphi}(\omega)^{-1}\leq \Lambda_{n}(\omega)\frac{\mu_{\omega}([x_{0},x_{1},\ldots,x_{n-1}]_{\omega})}{\exp\left(S_{n}\varphi(\omega,x)\right)}\leq C_{\varphi}(\omega),
\end{align*}
for any $x\in\mathcal{E}_{\omega}$ and $\mathbb{P}$-a.e. $\omega\in \Omega$, where $\{\mu_{\omega}\}_{\omega\in \Omega}$ are disintegrations of $\mu$ and $\Lambda_{n}(\omega):=\prod_{i=0}^{n-1}\lambda_{\vartheta^{i}\omega}$. For $p\in\mathbb{N}\cup \{\infty\}$ and measurable function $\{v(\omega,\cdot):\omega\in \Omega\}$ where $v(\omega,\cdot)\in L^{p}(\mathcal{E}_{\omega},\mathcal{B}_{\omega},\mu_{\omega})$ and $\omega\mapsto v(\omega,\cdot)$ is measurable, we denote by $\|v(\omega,\cdot)\|_{p}$ the $L^{p}$-norm of $v(\omega,\cdot)$. Set
\begin{align*}
\|v(\omega,\cdot,\cdot)\|_{\mathcal{H}_{p}(\omega)}=\left(\sum_{g\in G}\|v(\omega,\cdot,g)\|_{p}^{2}\right)^{\frac{1}{2}}
\end{align*}
and
\begin{align*}
\mathcal{H}_{p}(\omega)=\left\{v(\omega,\cdot,\cdot):\mathcal{E}_{\omega}\times G\rightarrow \mathbb{R}:\|v(\omega,\cdot,\cdot)\|_{\mathcal{H}_{p}(\omega)}<\infty\right\}.
\end{align*}

In particular, when $p=\infty$, we have
\begin{align*}
\|v(\omega,\cdot,\cdot)\|_{\mathcal{H}_{\infty}(\omega)}=\left(\sum_{g\in G}\|v(\omega,\cdot,g)\|_{\infty}^{2}\right)^{\frac{1}{2}}.
\end{align*}
\begin{remark}
For $v(\omega,\cdot,\cdot)\in \mathcal{H}_{p}(\omega)$, denote $v(\omega,\cdot,\cdot)$ by $v(\omega)$ for convenience. Then for any $\omega\in \Omega$, we observe that $$\mathcal{L}_{\tilde{\varphi}}^{\omega}:\mathcal{H}_{p}(\omega)\rightarrow\mathcal{H}_{p}(\vartheta\omega), \ v\mapsto\mathcal{L}_{\tilde{\varphi}}^{\omega}v(\vartheta\omega).$$
\end{remark}

For $\mathbb{P}$-a.e. $\omega\in\Omega$, the spectral radius of $\mathcal{L}_{\tilde{\varphi}}^{\omega}$ is defined by $${\rm spr}_{\mathcal{H}}(\mathcal{L}_{\tilde{\varphi}}^{\omega})=\limsup\limits_{n\rightarrow\infty}\|\mathcal{L}_{\tilde{\varphi}}^{\omega,n}\|_{\mathcal{H}_{\infty}(\vartheta^{n}\omega)}^{\frac{1}{n}}$$ where $$\|\mathcal{L}_{\tilde{\varphi}}^{\omega,n}\|_{\mathcal{H}_{\infty}(\vartheta^{n}\omega)}=\sup_{R\in \mathcal{H}_{\infty}(\omega), \ \|R\|_{\mathcal{H}_{\infty}(\omega)}= 1}\|\mathcal{L}_{\tilde{\varphi}}^{\omega,n}R(\vartheta^{n}\omega)\|_{\mathcal{H}_{\infty}(\vartheta^{n}\omega)}.$$

\subsection{Group extensions of random shifts of finite type}

In this subsection, we specialise to the case of finite sets $S(\omega),\omega\in \Omega$ when $l$ is a $\mathbb{N}$-valued random variable where $l(\omega)>1$ and $\int_{\Omega}\log l(\omega)d\mathbb{P}(\omega)<\infty$. The definition of random shifts of finite type was given in \cite{Kifer1995Multidimensional}. The corresponding definition was given in \cite{Kifer2006Random} related to matrices. Let $\mathcal{T}$ be a topologically mixing random group extension of a topologically mixing random shift of finite type $f$ by a countable group $G$. In this paper, we always assume that the random shift of finite type $f$ has topologically mixing property. Suppose that $\varphi:\mathcal{E}\rightarrow\mathbb{R}$ is a locally fiber H\"{o}lder continuous function satisfying \eqref{the condition of locally fiber Holder continuous function}. Note that $\mathcal{T}$ is a random countable Markov shift with measurably depending on $\omega\in \Omega$ matrices $$\tilde{A}_{\omega}=\left(\tilde{\alpha}_{(i,g),(j,h)}(\omega),(i,g)\in S(\omega)\times G,(j,h)\in S(\vartheta\omega)\times G\right)$$ with entries $\tilde{\alpha}_{(i,g),(j,h)}(\omega)\in \{0,1\}$ where $\tilde{\alpha}_{(i,g),(j,h)}(\omega)=1$ if $\alpha_{i,j}(\omega)=1$ and $\psi_{\omega}(i,j)=g^{-1}h$ and $\tilde{\alpha}_{(i,g),(j,h)}(\omega)=0$ otherwise. The relative Gurevi\v{c} pressure $\Pi_{Gur}^{(r)}(f,\varphi)$ of $f$ is equal to the relative (fiber) topological pressure $\Pi_{top}^{(r)}(f,\varphi)$ which is defined in \cite{Kifer2001On}. It follows from \cite[Proposition 1.6]{Kifer2001On} for $\mathbb{P}$-a.e. $\omega\in\Omega$, the relative topological pressure $\Pi_{top}^{(r)}(f,\varphi)$ is constant and the limit in exists. In addition, the limit equals again $\Pi_{top}^{(r)}(f,\varphi)$ with probability one if we take it along the subsequence $n_{a}^{(j)}(\omega),j\in\mathbb{N}$ in place of $n,n\in\mathbb{N}$, i.e.,
\begin{align*}
\Pi_{top}^{(r)}(f,\varphi)=\int_{\Omega}\lim_{j\rightarrow\infty}\frac{1}{n_{a}^{(j)}(\omega)}\log Z_{n_{a}^{(j)}(\omega)}^{\omega}(\varphi,a,a)d\mathbb{P}(\omega).
\end{align*}
Note that the random shift of finite type $f$ satisfies the relative BIP property and the corresponding relative Gurevi\v{c} pressure $\Pi_{Gur}^{(r)}(f,\varphi)<\infty$ by \cite[Remark 4.3]{Stadlbauer2010On}. Combining with Theorem \ref{the first result of main theorems}, then we have the following result.

\begin{remark}\label{the main remark of main results}
Let $\mathcal{T}$ be a topologically mixing random group extension of a random shift of finite type $f$ by a countable group $G$. The following statements hold.
\begin{itemize}
  \item Suppose that $\varphi:\mathcal{E}\rightarrow\mathbb{R}$ is a locally fiber H\"{o}lder continuous function satisfying \eqref{the condition of locally fiber Holder continuous function}. If $G$ is non-amenable, then for $\mathbb{P}$-a.e. $\omega\in \Omega$, one has
$$\log {\rm spr}_{\mathcal{H}}(\mathcal{L}_{\tilde{\varphi}}^{\omega})<\Pi_{Gur}^{(r)}(f,\varphi).$$
  \item Suppose that $\varphi:\mathcal{E}\rightarrow\mathbb{R}$ is a locally fiber H\"{o}lder continuous function satisfying \eqref{the condition of locally fiber Holder continuous function}. Then $\Pi_{Gur}^{(r)}(\mathcal{T},\tilde{\varphi})= \Pi_{Gur}^{(r)}(f,\varphi)$ implies that $G$ is amenable.
\end{itemize}
\end{remark}

Let $G^{\rm ab}$ be the abelianization of $G$ where $G^{\rm ab}=G/[G,G]$ and $[G,G]$ is a subgroup of $G$ generated by $$\left\{ghg^{-1}h^{-1}:g,h\in G\right\}.$$ The random group extension or (random skew product extension) $\mathcal{T}_{{\rm ab}}:=(\mathcal{T}_{{\rm ab}, \omega})_{\omega\in\Omega}$ is defined as follows:
\begin{align*}
\mathcal{T}_{{\rm ab}, \omega}:\mathcal{E}_{\omega}\times G^{{\rm ab}}\rightarrow\mathcal{E}_{\vartheta\omega}\times G^{{\rm ab}}, \ (x,g)\mapsto (f_{\omega}x,g\cdot\psi_{{\rm ab},\omega}(x)), \ \omega\in \Omega
\end{align*}
where $\psi_{{\rm ab},\omega}=\pi\circ\psi_{\omega}$ and $\pi:G\rightarrow G^{\rm ab}$ is the natural projection,
and the skew product transformation
\begin{align*}
\Theta_{\psi_{{\rm ab}}}:\mathcal{E}\times G^{{\rm ab}}\rightarrow\mathcal{E}\times G^{{\rm ab}}, \ (\omega,x,g)\mapsto (\vartheta\omega,f_{\omega}x,g\cdot\psi_{{\rm ab},\omega}(x)).
\end{align*}
Define the function $\tilde{\varphi}_{\rm ab}:\mathcal{E}\times G^{\rm ab}\rightarrow \mathbb{R},(\omega,x,g)\mapsto\tilde{\varphi}(\omega,x,g)$. Then the relative Gurevi\v{c} pressure of the random group $G^{\rm ab}$ extension is defined as
\begin{align*}
\Pi_{Gur}^{(r)}(\mathcal{T}_{\rm ab},\tilde{\varphi}_{\rm ab})&=\int_{\Omega}\lim_{j\rightarrow\infty}\frac{1}{n_{a}^{(j)}(\omega)}\log \tilde{Z}_{{\rm ab},n}^{\omega}(\tilde{\varphi},a,a)d\mathbb{P}(\omega)
\end{align*}
where
\begin{align*}
\tilde{Z}_{{\rm ab},n}^{\omega}(\tilde{\varphi}_{\rm ab},a,a):=\sum_{\alpha\in\mathcal{W}^{n}_{a,a}(\omega), \ \psi_{{\rm ab},\omega}^{n}(\alpha)=0}\exp\left(\sup_{y\in [\alpha]_{\omega}}\sum_{i=0}^{n-1}\tilde{\varphi}\circ(\Theta_{\psi_{\rm ab}})^{i}(\omega,y,id)\right).
\end{align*}
For the case of $\varphi=0$, the $h_{Gur}^{(r)}(\mathcal{T}_{\rm ab}):=\Pi_{Gur}^{(r)}(\mathcal{T}_{\rm ab},0)$ is called relative Gurevi\v{c} entropy of the random group $G^{\rm ab}$ extension.

\section{Random group extensions and its Perron-Frobenius operator}\label{Random group extensions and its Perron-Frobenius operator}

In this section, we give the proof of the part I of Theorem \ref{the first result of main theorems}, that is, Theorem \ref{the part I of the first result}. For a general case, it can be proved that for $\mathbb{P}$-a.e. $\omega\in \Omega$, then
\begin{align*}
\log {\rm spr}_{\mathcal{H}}(\mathcal{L}_{\tilde{\varphi}}^{\omega})\leq\Pi_{Gur}^{(r)}(f,\varphi)
\end{align*}
under the condition in Theorem \ref{the part I of the first result} for a general countable group $G$.

\begin{theorem}\label{the part I of the first result}
Let $\mathcal{T}$ be a topologically mixing random group extension of a random countable Markov shift $f$ with relative BIP property by a countable group $G$. Suppose that $\varphi:\mathcal{E}\rightarrow\mathbb{R}$ is a locally fiber H\"{o}lder continuous function satisfying \eqref{the condition of locally fiber Holder continuous function}. If $G$ is non-amenable, then for $\mathbb{P}$-a.e. $\omega\in \Omega$, one has
$$\log {\rm spr}_{\mathcal{H}}(\mathcal{L}_{\tilde{\varphi}}^{\omega})<\Pi_{Gur}^{(r)}(f,\varphi).$$
\end{theorem}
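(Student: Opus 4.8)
The plan is to realize $\log{\rm spr}_{\mathcal H}(\mathcal L_{\tilde\varphi}^{\omega})$ as a fibred Lyapunov exponent and then to show that non-amenability of $G$ forces this exponent to sit \emph{strictly} below $\Pi_{Gur}^{(r)}(f,\varphi)$, the gap coming from a Kesten--Day type estimate on $G$. First I would check that $n\mapsto\log\|\mathcal L_{\tilde\varphi}^{\omega,n}\|_{\mathcal H_{\infty}(\vartheta^{n}\omega)}$ is a subadditive cocycle over $(\Omega,\mathbb P,\vartheta)$ — immediate from $\mathcal L_{\tilde\varphi}^{\omega,n+m}=\mathcal L_{\tilde\varphi}^{\vartheta^{n}\omega,m}\circ\mathcal L_{\tilde\varphi}^{\omega,n}$ — and that it is integrable, using the Cauchy--Schwarz bound $\|\mathcal L_{\tilde\varphi}^{\omega,n}\|_{\mathcal H_{\infty}}\le\|\mathcal L_{\varphi}^{\omega,n}\mathbf{1}\|_{\infty}$ on the $G$-fibre together with $\int_{\Omega}|\log\|\mathcal L_{\varphi}^{\omega}\mathbf{1}\|_{\infty}|\,d\mathbb P<\infty$. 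Kingman's subadditive ergodic theorem then gives a $\mathbb P$-a.e. constant limit $\Lambda=\lim_{n}\frac1n\log\|\mathcal L_{\tilde\varphi}^{\omega,n}\|_{\mathcal H_{\infty}}$, and since the $\limsup$ in the definition of ${\rm spr}_{\mathcal H}$ is now a genuine limit, $\Lambda=\log{\rm spr}_{\mathcal H}(\mathcal L_{\tilde\varphi}^{\omega})$ a.e. The same estimate yields the elementary inequality $\Lambda\le\Pi_{Gur}^{(r)}(f,\varphi)$ (the one recorded just before the theorem), and the whole problem is to make this strict.

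Next I would normalize and pass to a Hilbert setting. Using the relative BIP property and the relative Ruelle theorem recalled in the preliminaries — the Gibbs measure $\mu$ with disintegration $\{\mu_{\omega}\}$, the random eigenvalue $\lambda_{\omega}$ with $\int\log\lambda_{\omega}\,d\mathbb P=\Pi_{Gur}^{(r)}(f,\varphi)$ (finite under our hypotheses), and the uniform Gibbs bounds — I would conjugate $\varphi$ to a normalized potential $\bar\varphi$, so that the base transfer operator becomes Markovian up to uniformly bounded distortion and the factor $\exp(n\,\Pi_{Gur}^{(r)}(f,\varphi))$ is divided out; it then suffices to show that the normalized operator has fibred spectral radius strictly below $1$. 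The Gibbs bounds let me replace $\|\cdot\|_{\mathcal H_{\infty}}$ by the Hilbert norm $\mathcal H_{2}(\omega)$ built from $\mu_{\omega}$ on the fibre and counting measure on $G$, at the cost of tempered constants and with the same exponential rate (the transfer operator regularizes after finitely many steps). I would then write $\|\mathcal L_{\tilde{\bar\varphi}}^{\omega,n}\|_{\mathcal H_{2}}^{2}=\|(\mathcal L_{\tilde{\bar\varphi}}^{\omega,n})^{*}\mathcal L_{\tilde{\bar\varphi}}^{\omega,n}\|_{\mathcal H_{2}}$ and observe that the positive operator $(\mathcal L_{\tilde{\bar\varphi}}^{\omega,n})^{*}\mathcal L_{\tilde{\bar\varphi}}^{\omega,n}$ acts on the $G$-coordinate through a symmetric kernel whose weight at $g\in G$ is the $\bar\varphi$-weighted number of pairs $(\alpha,\alpha')\in\mathcal W^{n}(\omega)^{2}$ with a common $f_{\omega}^{n}$-image and $\psi_{\omega}^{n}(\alpha)\,(\psi_{\omega}^{n}(\alpha'))^{-1}=g$. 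This weight is invariant under $g\mapsto g^{-1}$, is strictly positive at $g=\mathrm{id}$, and — since $\mathcal T$ is topologically mixing, via the sets $G_{a,b}^{n}(\omega)$ — for $n$ large is supported on a generating subset of $G$.

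The core of the argument, and the step I expect to be the main obstacle, is the Kesten--Day estimate turning non-amenability into exponential decay. Since $G$ is non-amenable it has a finitely generated non-amenable subgroup, equivalently there are a finite symmetric $F\subseteq G$ and $\rho_{0}\in(0,1)$ with $\|\mathrm{conv}_{u_{F}}\|_{\ell^{2}(G)}=\rho_{0}$ for the uniform probability $u_{F}$ on $F$, i.e. the fraction of closed $F$-labelled walks of length $\ell$ among all $F$-walks is at most $\rho_{0}^{\ell}$. Using topological mixing of $\mathcal T$ once more to realize each element of $F$ by admissible base blocks of a fixed length $N_{0}$, and grouping the $n$ coordinates into such blocks, I would compare the return-to-identity mass of $(\mathcal L_{\tilde{\bar\varphi}}^{\omega,n})^{*}\mathcal L_{\tilde{\bar\varphi}}^{\omega,n}$ with its total mass and obtain $\|(\mathcal L_{\tilde{\bar\varphi}}^{\omega,n})^{*}\mathcal L_{\tilde{\bar\varphi}}^{\omega,n}\|_{\mathcal H_{2}}\le C(\omega)\,\rho_{1}^{\,n}\,\|(\mathcal L_{\bar\varphi}^{\omega,n})^{*}\mathcal L_{\bar\varphi}^{\omega,n}\|$ with $\rho_{1}\in(0,1)$ independent of $\omega$, $C(\omega)$ tempered, and the base factor on the right of zero exponential rate in the normalized coordinates. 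Taking $\frac1n\log$, integrating, using the temperedness of $C(\omega)$ (which comes from the integrability of $\varphi$ and of $\kappa$), and letting $n\to\infty$ gives $\Lambda<0$ in normalized coordinates, i.e. $\log{\rm spr}_{\mathcal H}(\mathcal L_{\tilde\varphi}^{\omega})<\Pi_{Gur}^{(r)}(f,\varphi)$ for $\mathbb P$-a.e. $\omega$. The difficulty is precisely that the intrinsic increments $\psi_{\omega}^{n}(\alpha)$ are neither uniform nor confined to a single generating set, so Kesten's inequality cannot be quoted directly; the block-encoding afforded by topological mixing is what lets one dominate the $G$-walk produced by the skew product by a genuine $F$-random walk, and one must keep every constant in this comparison tempered uniformly in $\omega$ so that the subadditive ergodic theorem still delivers a strictly negative exponent.
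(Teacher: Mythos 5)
Your first half --- subadditivity of $n\mapsto\log\|\mathcal L_{\tilde\varphi}^{\omega,n}\|_{\mathcal H_{\infty}(\vartheta^{n}\omega)}$, Kingman's theorem, the Cauchy--Schwarz bound by $\|\mathcal L_{\varphi}^{\omega,n}\mathbf 1\|_{\infty}$, and the normalization via the relative Ruelle eigenfunction --- is sound, and is arguably cleaner than the paper's own derivation of the non-strict inequality, which instead goes through the fiber Gibbs property of $\mu_{\omega}$ (Proposition \ref{the estimation 1 of random operators}) and the projected Markov-type operators $T_{n}^{\omega}$ (Lemma \ref{the description of spectral of random group extension}). The divergence, and the problem, lies in the strictness. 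The paper never attempts a direct Kesten--Day spectral-gap estimate: it argues by contradiction, showing that equality on a positive-measure set of $\omega$ forces $\limsup_{n}\mathcal A_{n}(\omega)^{1/n}=1$, hence (Lemma \ref{the estimation 2 of random operators with functions}) the existence of almost-invariant elements of $\mathcal H_{c}(\omega)$, from which F{\o}lner sets for $G$ are extracted by the layer-cake decomposition $R_{k}^{2}=\sum_{i}\lambda_{i}\mathbf 1_{A_{i}}$ in the proof of Theorem \ref{the part 2 of the first theorem}; non-amenability is then contradicted. You run the implication forward, and the step you yourself flag as the main obstacle is exactly where the argument, as written, does not close.

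Concretely: $(\mathcal L_{\tilde{\bar\varphi}}^{\omega,n})^{*}\mathcal L_{\tilde{\bar\varphi}}^{\omega,n}$ is not a right-convolution operator on $\ell^{2}(G)$. Its weight at $g$ depends on the base point and on the pair $(\alpha,\alpha')$ jointly, the operator does not lie in the group von Neumann algebra, and consecutive blocks are coupled through admissibility and through the common-image constraint; so neither the trace identity $\|A\|=\lim_{\ell}\langle A^{\ell}\delta_{e},\delta_{e}\rangle^{1/\ell}$ nor the submultiplicativity $\|{\rm conv}_{p_{1}*\cdots*p_{m}}\|\le\prod_{j}\|{\rm conv}_{p_{j}}\|$ is available. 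Comparing ``return-to-identity mass'' with ``total mass'' only yields the Schur-test bound $\|A\|\le{}$total mass, i.e.\ the non-strict inequality you already have. To produce a gap one must either (i) first project onto the $G$-coordinate --- this is what the paper's $T_{n}^{\omega}=\mathcal M_{\vartheta^{n}\omega}\mathcal L_{\tilde\varphi}^{\omega,n}|_{\mathcal H_{c}(\omega)}$ accomplish --- and then control the failure of $T_{n+m}^{\omega}=T_{m}^{\vartheta^{n}\omega}T_{n}^{\omega}$ by Gibbs distortion constants, or (ii) carry out the word-splitting and uniform-convexity argument of Lemma \ref{the estimation 2 of random operators with functions}, where each block contributes a definite weight $\gamma(\omega)$ to two admissible continuations whose $\psi$-images differ by a prescribed element and the parallelogram law yields a factor $(1-2\gamma(\omega)\delta_{3}(\omega))$ per block. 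Your proposed domination of the skew-product walk by a genuine $F$-random walk is precisely a decomposition of type (ii), $P=c\,u_{F}+(1-c)Q$ with $\|{\rm conv}_{u_{F}}\|<1$ and $Q$ sub-Markov, but you have not said in what sense the operator-valued, base-point-dependent, block-correlated kernel admits such a decomposition, nor why the per-block constant $c(\omega)$ satisfies $\int\log\bigl(1-c(\omega)(1-\rho_{0})\bigr)\,d\mathbb P<0$ rather than degenerating. That is the entire content of the hard lemma, so as it stands the proposal identifies the right difficulty but leaves a genuine gap rather than providing a complete alternative proof.
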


\begin{proof}[Proof of Theorem \ref{the part I of the first result}]
Firstly, for $\mathbb{P}$-a.e. $\omega\in \Omega$, we claim that $$\log {\rm spr}_{\mathcal{H}}(\mathcal{L}_{\tilde{\varphi}}^{\omega})\leq\Pi_{Gur}^{(r)}(f,\varphi).$$
And then in the following, fix $\omega\in \Omega$, we divide the proof into two cases.

{\bf Case 1.} Consider $\mathcal{L}_{\varphi}^{\omega}({\bf 1})={\bf 1}$ and then $\Pi_{Gur}^{(r)}(f,\varphi)=0$. Then for $\mathbb{P}$-a.e. $\omega\in \Omega$, the inequality in Theorem \ref{the part I of the first result} holds. Indeed,
by using (3) in Lemma \ref{the description of spectral of random group extension}, it can be proved that $${\rm spr}_{\mathcal{H}}(\mathcal{L}_{\tilde{\varphi}}^{\omega})=\limsup_{n\rightarrow\infty}\|T_{n}^{\omega}\|_{\mathcal{H}_{\infty}(\vartheta^{n}\omega)}^{\frac{1}{n}}.$$
Again, by using (2) in Lemma \ref{the description of spectral of random group extension}, we can obtain that $$\|T_{n}^{\omega}\|_{\mathcal{H}_{\infty}(\vartheta^{n}\omega)}\leq \|\mathcal{L}_{\tilde{\varphi}}^{\omega,n}\|_{\mathcal{H}_{\infty}(\vartheta^{n}\omega)}.$$
Combining with Proposition \ref{the estimation 1 of random operators} and above inequality, we have
\begin{align*}
{\rm spr}_{\mathcal{H}}(\mathcal{L}_{\tilde{\varphi}}^{\omega})=\limsup_{n\rightarrow\infty}\|T_{n}^{\omega}\|_{\mathcal{H}_{\infty}(\vartheta^{n}\omega)}^{\frac{1}{n}}\leq 1.
\end{align*}
This implies that $$\log {\rm spr}_{\mathcal{H}}(\mathcal{L}_{\tilde{\varphi}}^{\omega})\leq 0=\Pi_{Gur}^{(r)}(f,\varphi).$$

{\bf Case 2.} If not for Case 1, there exists locally fiber H\"{o}lder continuous $h:\mathcal{E}\rightarrow \mathbb{R}^{+}$ which $h(\omega,\cdot),\omega\in \Omega$ are bounded away from zero and infinity such that
\begin{align*}
\mathcal{L}_{\varphi}^{\omega}(h(\omega,\cdot))=\lambda_{\omega}\cdot h(\vartheta\omega,\cdot)
\end{align*}
where $\int\log\lambda_{\omega}d\mathbb{P}(\omega)=\Pi_{Gur}^{(r)}(f,\varphi)$ for $\mathbb{P}$-a.e. $\omega\in \Omega$.
Define $\varphi_{0}(\omega,\cdot)=\varphi(\omega,\cdot)+\log h(\omega,\cdot)-\log h\circ\Theta(\omega,\cdot)-\log \lambda_{\omega}.$ Then we have
$\mathcal{L}_{\varphi_{0}}^{\omega}({\bf 1})={\bf 1}$
and then $\Pi_{Gur}^{(r)}(f,\varphi_{0})=0$. By using Case 1, we have
\begin{align*}
\log {\rm spr}_{\mathcal{H}}(\mathcal{L}_{\tilde{\varphi}_{0}}^{\omega})\leq\Pi_{Gur}^{(r)}(f,\varphi_{0}).
\end{align*}
For any $v\in \mathcal{H}_{\infty}(\omega)$, we have
\begin{align*}
\mathcal{L}_{\tilde{\varphi}_{0}}^{\omega}v=\exp\left(-\log \lambda_{\omega}\right)\cdot\frac{1}{h\circ\pi_{1,2}(\vartheta\omega)}\cdot\mathcal{L}_{\tilde{\varphi}}^{\omega}v\cdot h\circ\pi_{1,2}.
\end{align*}
Therefore, we obtain that $\mathcal{L}_{\tilde{\varphi}_{0}}^{\omega}$ and $\exp\left(-\log \lambda_{\omega}\right)\cdot\mathcal{L}_{\tilde{\varphi}}^{\omega}$ have the same spectrum since $h$ is bounded. Hence,
$$\log {\rm spr}_{\mathcal{H}}(\mathcal{L}_{\tilde{\varphi}_{0}}^{\omega})=\log {\rm spr}_{\mathcal{H}}(\mathcal{L}_{\tilde{\varphi}}^{\omega})-\Pi_{Gur}^{(r)}(f,\varphi).$$ By Case 1, we have $$\log {\rm spr}_{\mathcal{H}}(\mathcal{L}_{\tilde{\varphi}_{0}}^{\omega})\leq \Pi_{Gur}^{(r)}(f,\varphi_{0}),$$ then
\begin{align*}
\log {\rm spr}_{\mathcal{H}}(\mathcal{L}_{\tilde{\varphi}}^{\omega})-\Pi_{Gur}^{(r)}(f,\varphi)\leq 0=\Pi_{Gur}^{(r)}(f,\varphi_{0}).
\end{align*}
Therefore,
\begin{align*}
\log {\rm spr}_{\mathcal{H}}(\mathcal{L}_{\tilde{\varphi}}^{\omega})\leq\Pi_{Gur}^{(r)}(f,\varphi).
\end{align*}
Finally, if the result in Theorem \ref{the part I of the first result} does not hold, by using Case 2, we only consider the case of the equality. If there exists a subset $\Lambda$ of $\Omega$ with $\mathbb{P}(\Lambda)>0$ such that for any $\omega\in \Lambda$, one has $$\log {\rm spr}_{\mathcal{H}}(\mathcal{L}_{\tilde{\varphi}}^{\omega})=\Pi_{Gur}^{(r)}(f,\varphi),$$ by using the similar process of Step 1 in Theorem \ref{the second result of main theorems}, it can be obtained that $G$ is amenable, which is a contradiction under the condition in the Theorem \ref{the part I of the first result}. Then we have for $\mathbb{P}$-a.e. $\omega\in \Omega$,
$$\log {\rm spr}_{\mathcal{H}}(\mathcal{L}_{\tilde{\varphi}}^{\omega})<\Pi_{Gur}^{(r)}(f,\varphi)$$ from the Claim 1 and Claim 2.
This implies the proof.
\end{proof}

\section{Random group extensions by amenable groups}\label{Random group extensions by amenable groups}

In this section, we give the proof of the part II of Theorem \ref{the first result of main theorems}, that is Theorem \ref{the part 2 of the first theorem}. Let $\mathcal{T}$ be a topologically mixing random group extension of a random countable Markov shift $f$ with relative BIP property by a countable group $G$, we mainly consider the question for relative Gurevi\v{c} pressures between the random group extension and the random countable Markov shift (Question \ref{problem of random group extension and random countable Markov shift}).
Define the set
\begin{align*}
\mathcal{H}_{c}(\omega):=\left\{R(\omega,\cdot,\cdot)\in\mathcal{H}_{\infty}(\omega):R(\omega,\cdot,\cdot) \ \text{is constant on} \ \mathcal{E}_{\omega}\times \{g\} \ \text{for any} \ g\in G\right\}.
\end{align*}
For any $\alpha\in \mathcal{W}^{n}(\omega)$, we consider the following map
\begin{align*}
\tau_{\alpha}:\bigcup_{\omega\in \Omega}\{\vartheta^{n}\omega\}\times f_{\omega}^{n}([\alpha]_{\omega})\rightarrow\bigcup_{\omega\in \Omega}\{\omega\}\times [\alpha]_{\omega}, \ (\vartheta^{n}\omega,x)\mapsto (\omega,y),
\end{align*}
where $f_{\omega}^{n}y=x$. Then
\begin{align*}
\varphi\circ\tau_{\alpha}:\bigcup_{\omega\in \Omega}\{\vartheta^{n}\omega\}\times f_{\omega}^{n}([\alpha]_{\omega})\rightarrow \mathbb{R}, \ (\vartheta^{n}\omega,x)\mapsto 1_{f_{\omega}^{n}([\alpha]_{\omega})}(x)\cdot\varphi\circ\tau_{\alpha}(\vartheta^{n}\omega,x).
\end{align*}
Set
\begin{align*}
\tilde{\tau}_{\alpha}:\bigcup_{\omega\in \Omega}\{\vartheta^{n}\omega\}\times f_{\omega}^{n}([\alpha]_{\omega})\times G\rightarrow \bigcup_{\omega\in \Omega}\{\omega\}\times [\alpha]_{\omega}\times G,
\end{align*}
where
\begin{align*}
(\vartheta^{n}\omega,x,g)\mapsto (\omega,y,g\cdot(\psi_{\omega}^{n}(y))^{-1})
\end{align*}
and $f_{\omega}^{n}y=x$. Denote $\tau_{\alpha}(\vartheta^{n}\omega,x)$ by $\tau_{\alpha}(\vartheta^{n}\omega)$, and $\tilde{\tau}_{\alpha}(\vartheta^{n}\omega,x,g)$ by $\tilde{\tau}_{\alpha}(\vartheta^{n}\omega)$ for convenience.
In addition, we have
\begin{align*}
\mathcal{L}_{\tilde{\varphi}}^{\omega}v(\vartheta\omega,x,g)=\sum_{\alpha\in\mathcal{W}^{1}(\omega)}\exp(\varphi\circ\tau_{\alpha}(\vartheta\omega,x))\cdot v\circ\tilde{\tau}_{\alpha}(\vartheta\omega,x,g).
\end{align*}

Next, we give the following result.

\begin{theorem}\label{the part 2 of the first theorem}
Let $\mathcal{T}$ be a topologically mixing random group extension of a random countable Markov shift $f$ with relative BIP property by a countable group $G$. Suppose that $\varphi:\mathcal{E}\rightarrow\mathbb{R}$ is a locally fiber H\"{o}lder continuous function satisfying \eqref{the condition of locally fiber Holder continuous function}. Then we have $\Pi_{Gur}^{(r)}(f,\varphi)=\Pi_{Gur}^{(r)}(\mathcal{T},\tilde{\varphi})$ implies that group $G$ is amenable.
\end{theorem}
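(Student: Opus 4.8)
The plan is to route this implication through the fiberwise spectral radius of $\mathcal{L}_{\tilde{\varphi}}^{\omega}$. First I would prove that the relative Gurevi\v{c} pressure of the extension is always dominated by $\log{\rm spr}_{\mathcal{H}}(\mathcal{L}_{\tilde{\varphi}}^{\omega})$. Fix $a\in\mathcal{W}^{1}$ and consider the $\mathcal{H}_{\infty}(\omega)$-unit vector $R_{a}={\bf 1}_{[a]_{\omega}}\otimes\delta_{id}$. For a return time $n=n_{a}^{(j)}(\omega)$ (so $\vartheta^{n}\omega\in\Omega_{a}$) and $x\in[a]_{\vartheta^{n}\omega}$, unravelling the definition of $\mathcal{L}_{\tilde{\varphi}}^{\omega,n}$ gives
\begin{align*}
\mathcal{L}_{\tilde{\varphi}}^{\omega,n}R_{a}(\vartheta^{n}\omega,x,id)=\sum_{\substack{\alpha\in\mathcal{W}^{n}_{a,a}(\omega)\\ \psi_{\omega}^{n}(\alpha)=id}}\exp\left(S_{n}\varphi(\omega,y_{\alpha})\right),
\end{align*}
where $y_{\alpha}$ is the unique point of $[\alpha]_{\omega}$ with $f_{\omega}^{n}y_{\alpha}=x$; by the locally fiber H\"{o}lder estimate this agrees with $\tilde{Z}_{n}^{\omega}(\tilde{\varphi},a,a)$ up to the factor $B_{1}^{\vartheta^{n}\omega}(\varphi)^{\pm 1}$. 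Hence $\tilde{Z}_{n}^{\omega}(\tilde{\varphi},a,a)\le B_{1}^{\vartheta^{n}\omega}(\varphi)\,\|\mathcal{L}_{\tilde{\varphi}}^{\omega,n}\|_{\mathcal{H}_{\infty}(\vartheta^{n}\omega)}$. Since $\int_{\Omega}\log B_{1}^{\omega}(\varphi)\,d\mathbb{P}<\infty$ forces $\tfrac1n\log B_{1}^{\vartheta^{n}\omega}(\varphi)\to 0$ a.e., dividing by $n=n_{a}^{(j)}(\omega)$, letting $j\to\infty$ (the return times are a.e.\ finite and of positive density by Kac's lemma, and by Kingman's subadditive theorem $\tfrac1n\log\|\mathcal{L}_{\tilde{\varphi}}^{\omega,n}\|_{\mathcal{H}_{\infty}(\vartheta^{n}\omega)}$ converges a.e.\ to the constant $\log{\rm spr}_{\mathcal{H}}(\mathcal{L}_{\tilde{\varphi}}^{\omega})$) and integrating against $\mathbb{P}$ yields $\Pi_{Gur}^{(r)}(\mathcal{T},\tilde{\varphi})\le\log{\rm spr}_{\mathcal{H}}(\mathcal{L}_{\tilde{\varphi}}^{\omega})$ for $\mathbb{P}$-a.e.\ $\omega$.

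Combining this with the a priori inequality $\log{\rm spr}_{\mathcal{H}}(\mathcal{L}_{\tilde{\varphi}}^{\omega})\le\Pi_{Gur}^{(r)}(f,\varphi)$ (the ``Claim'' established in the proof of Theorem~\ref{the part I of the first result} through Lemma~\ref{the description of spectral of random group extension} and Proposition~\ref{the estimation 1 of random operators}) gives the chain $\Pi_{Gur}^{(r)}(\mathcal{T},\tilde{\varphi})\le\log{\rm spr}_{\mathcal{H}}(\mathcal{L}_{\tilde{\varphi}}^{\omega})\le\Pi_{Gur}^{(r)}(f,\varphi)$ for $\mathbb{P}$-a.e.\ $\omega$. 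The hypothesis makes the two outer terms equal, so $\log{\rm spr}_{\mathcal{H}}(\mathcal{L}_{\tilde{\varphi}}^{\omega})=\Pi_{Gur}^{(r)}(f,\varphi)$ for $\mathbb{P}$-a.e.\ $\omega$, and it remains to deduce amenability from this saturation. This is exactly the contrapositive of Theorem~\ref{the part I of the first result}, equivalently the argument carried out as Step~1 in the proof of Theorem~\ref{the second result of main theorems}: one normalizes by the random Gibbs eigenfunction, passing to the potential $\varphi_{0}$ of Case~2 in the proof of Theorem~\ref{the part I of the first result} so that $\mathcal{L}_{\varphi_{0}}^{\omega}{\bf 1}={\bf 1}$, $\Pi_{Gur}^{(r)}(f,\varphi_{0})=0$ and $\log{\rm spr}_{\mathcal{H}}(\mathcal{L}_{\tilde{\varphi}_{0}}^{\omega})=0$; then one compresses $\mathcal{L}_{\tilde{\varphi}_{0}}^{\omega}$ to the subspace $\mathcal{H}_{c}(\omega)$ of functions constant on each $\mathcal{E}_{\omega}\times\{g\}$, obtaining via the maps $\tau_{\alpha},\tilde{\tau}_{\alpha}$ a random convolution operator on $\ell^{2}(G)$ driven by the random probability vector $p_{\omega}(g)=\mathcal{L}_{\varphi_{0}}^{\omega}({\bf 1}_{\{\psi_{\omega}=g\}})$; finally Proposition~\ref{the estimation 1 of random operators} bounds $\|T_{n}^{\omega}\|_{\mathcal{H}_{\infty}}$ by the $n$-step return weights of the symmetrization of this walk, so $\log{\rm spr}_{\mathcal{H}}(\mathcal{L}_{\tilde{\varphi}_{0}}^{\omega})=0$ forces these weights to decay subexponentially $\mathbb{P}$-a.e., and a Fubini/ergodicity argument together with Kesten's criterion (equivalently, extraction of a F{\o}lner set as in F{\o}lner's theorem) produces a F{\o}lner net in $G$. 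Hence $G$ is amenable.

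The main obstacle is this last step: converting the a.e.-fiberwise saturation $\log{\rm spr}_{\mathcal{H}}(\mathcal{L}_{\tilde{\varphi}_{0}}^{\omega})=0$ into a single deterministic F{\o}lner set for $G$, in a setting where the group extension is neither symmetric nor stationary. This is where one must symmetrize the induced random walk and invoke a genuinely random version of Kesten's theorem, and it is also the place where the relative BIP property and topological mixing of $\mathcal{T}$ are used (to control the words realizing each $g\in G$ with uniform-up-to-$\log$-integrable distortion). The comparison in the first step, while conceptually straightforward, is the other delicate point: identifying the ``diagonal'' partition function $\tilde{Z}_{n}^{\omega}(\tilde{\varphi},a,a)$ with a matrix coefficient of $\mathcal{L}_{\tilde{\varphi}}^{\omega,n}$ up to a subexponential error relies on the $\log$-integrability of $B_{1}^{\omega}(\varphi)$ and of the return times $n_{a}^{(j)}(\omega)$ supplied by Kac's lemma, plus the fact (Kingman) that the fiberwise spectral radius is an honest $\vartheta$-invariant constant.
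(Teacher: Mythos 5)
Your reduction steps are fine and essentially reproduce inequalities the paper proves elsewhere: the bound $\Pi_{Gur}^{(r)}(\mathcal{T},\tilde{\varphi})\leq\log {\rm spr}_{\mathcal{H}}(\mathcal{L}_{\tilde{\varphi}}^{\omega})$ is (after normalization) Proposition \ref{the estimation 1 of random operators} combined with Lemma \ref{the description of spectral of random group extension}(3), and $\log {\rm spr}_{\mathcal{H}}(\mathcal{L}_{\tilde{\varphi}}^{\omega})\leq\Pi_{Gur}^{(r)}(f,\varphi)$ is the Claim in the proof of Theorem \ref{the part I of the first result}; your distortion bookkeeping with $B_{1}^{\omega}(\varphi)$ and the use of Kac/Kingman are acceptable minor variants. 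The problem is the last step, which is the actual content of the theorem. You deduce amenability from the saturation $\log {\rm spr}_{\mathcal{H}}(\mathcal{L}_{\tilde{\varphi}}^{\omega})=\Pi_{Gur}^{(r)}(f,\varphi)$ by invoking the contrapositive of Theorem \ref{the part I of the first result}, equivalently Step 1 of Theorem \ref{the second result of main theorems}. But in the paper both of those are themselves justified by appealing to "the processes of Proposition \ref{the estimation 1 of random operators}, Lemma \ref{the estimation 2 of random operators with functions} and Theorem \ref{the part 2 of the first theorem}" --- i.e.\ to the proof of the very statement you are asked to prove. Citing them as black boxes is circular; you must supply the mechanism that turns $\limsup_{n}(\mathcal{A}_{n}(\omega))^{1/n}=1$ (equivalently $\Pi_{Gur}^{(r)}(\mathcal{T},\tilde{\varphi}_{0})=0$) into amenability.

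Your own sketch of that mechanism --- compress to $\mathcal{H}_{c}(\omega)$, symmetrize the induced random walk, and apply "a random version of Kesten's theorem" --- is not what the paper does and is unlikely to work as stated: no symmetry of $\psi$ or of $\varphi$ is assumed, and the whole point of the Stadlbauer-type argument the paper follows is to avoid Kesten-style symmetrization. Concretely, the paper proves Lemma \ref{the estimation 2 of random operators with functions} by a contradiction argument that uses the relative BIP property to build, for each test function $R\in\mathcal{H}_{c}(\omega)$, two admissible words $u,z$ with the same trajectory weight but group elements $id$ and $\psi_{\omega}^{k}(\alpha_{R})$, and then exploits uniform convexity (rotundity) of $\ell^{2}(G)$ to force exponential decay of $\|\mathcal{L}_{\tilde{\varphi}}^{\omega,nmk}R\|$, contradicting the lower bound of Proposition \ref{the estimation 1 of random operators}; this yields almost-invariant vectors $R_{k}(\omega)\in\mathcal{H}_{c}(\omega)$. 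The proof of the theorem then uses topological mixing of $\mathcal{T}$ to realize a prescribed finite set $K\subset G$ by words, transfers the almost invariance to the functions $g\mapsto R_{k}^{2}(\vartheta^{m}\omega,x,g)$ in $\ell^{1}(G)$, and extracts a F{\o}lner set from a layer-cake decomposition $R_{k}^{2}=\sum_{i}\lambda_{i}{\bf 1}_{A_{i}}$. None of this appears in your proposal, and you yourself flag it as "the main obstacle"; as written, the proof therefore has a genuine gap precisely at the step where amenability is produced.
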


In order to prove the Theorem \ref{the part 2 of the first theorem}, we need the following results. The case for a general topological group extension of a topological countable Markov shift was proved in \cite{Stadlbauer2013An}. We mainly follows the method of \cite{Stadlbauer2013An}. In the following, we consider the case of $\mathcal{L}_{\varphi}^{\omega}({\bf 1})={\bf 1}$ and then $\Pi_{Gur}^{(r)}(f,\varphi)=0$. The following result gives a description of random Perron-Frobenius operators $\mathcal{L}_{\tilde{\varphi}}^{\omega}$, $\omega\in\Omega$ with respect to $\mathcal{T}$, and estimates the relations between the random Perron-Frobenius operator and relative Gurevi\v{c} pressure.

\begin{proposition}\label{the estimation 1 of random operators}
Let $\mathcal{T}$ be a topologically mixing random group extension of a random countable Markov shift $f$ with relative BIP property by a countable group $G$. Suppose that $\varphi:\mathcal{E}\rightarrow\mathbb{R}$ is a locally fiber H\"{o}lder continuous function satisfying \eqref{the condition of locally fiber Holder continuous function}. The function spaces $(\mathcal{H}_{1}(\omega),\|\cdot\|_{\mathcal{H}_{1}(\omega)})$ and $(\mathcal{H}_{\infty}(\omega),\|\cdot\|_{\mathcal{H}_{\infty}(\omega)}),\omega\in \Omega$ are Banach spaces, the operators $$\mathcal{L}_{\tilde{\varphi}}^{\omega,k}:=\mathcal{L}_{\tilde{\varphi}}^{\vartheta^{k-1}\omega}\circ\cdots\circ\mathcal{L}_{\tilde{\varphi}}^{\vartheta\omega}\circ\mathcal{L}_{\tilde{\varphi}}^{\omega}:\mathcal{H}_{\infty}(\omega)\rightarrow\mathcal{H}_{\infty}(\vartheta^{k}\omega)$$ are bounded and there exists $C(\omega)>1$ such that $$\|\mathcal{L}_{\tilde{\varphi}}^{\omega,k}\|_{\mathcal{H}_{\infty}(\vartheta^{k}\omega)}\leq C(\omega)$$ for all $k\in\mathbb{N}$. Furthermore,
\begin{align*}
\mathcal{A}_{k}(\omega):&=\sup\left\{\frac{\|\mathcal{L}_{\tilde{\varphi}}^{\omega,k}R(\vartheta^{k}\omega,\cdot,\cdot)\|_{\mathcal{H}_{1}(\vartheta^{k}\omega)}}{\|R(\omega,\cdot,\cdot)\|_{\mathcal{H}_{1}(\omega)}}:R(\omega,\cdot,\cdot)\geq 0,R\in \mathcal{H}_{c}(\omega)\right\}\\
&\leq 1
\end{align*}
and then for $\mathbb{P}$-a.e. $\omega\in \Omega$, one has
\begin{align*}
\limsup_{k\rightarrow\infty}(\mathcal{A}_{k}(\omega))^{\frac{1}{k}}\geq\exp\left(\Pi_{Gur}^{(r)}(\mathcal{T},\tilde{\varphi})\right).
\end{align*}
\end{proposition}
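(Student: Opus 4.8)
The plan is to work in the normalized setting $\mathcal{L}_{\varphi}^{\omega}(\mathbf{1})=\mathbf{1}$, $\Pi_{Gur}^{(r)}(f,\varphi)=0$ (the general case reducing to this one exactly as in the proof of Theorem \ref{the part I of the first result}, via the conjugation $\varphi\mapsto\varphi_0$), and to establish the two assertions separately. First I would verify that $(\mathcal{H}_1(\omega),\|\cdot\|_{\mathcal{H}_1(\omega)})$ and $(\mathcal{H}_\infty(\omega),\|\cdot\|_{\mathcal{H}_\infty(\omega)})$ are Banach spaces: completeness follows from completeness of $L^1(\mathcal{E}_\omega,\mu_\omega)$ and $L^\infty(\mathcal{E}_\omega,\mu_\omega)$ together with completeness of the weighted $\ell^2(G)$-sum, so this is routine. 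For the boundedness of $\mathcal{L}_{\tilde\varphi}^{\omega,k}$ on $\mathcal{H}_\infty$, the point is that $\|\mathcal{L}_{\tilde\varphi}^{\omega,k}v(\vartheta^k\omega,\cdot,g)\|_\infty$ is controlled, after fixing the last coordinate $g$, by $\mathcal{L}_{\varphi}^{\omega,k}\mathbf{1}$ applied to the sup-norm profile of $v$, and since $\mathcal{L}_\varphi^\omega(\mathbf{1})=\mathbf{1}$ one gets, using the local Hölder distortion bound $B_1^\omega(\varphi)$ introduced in the preliminaries, that $\|\mathcal{L}_{\tilde\varphi}^{\omega,k}\|_{\mathcal{H}_\infty(\vartheta^k\omega)}\le C(\omega)$ uniformly in $k$ for a measurable $C(\omega)>1$; the $\ell^2(G)$-part of the $\mathcal{H}_\infty$-norm only shuffles the $g$-labels by right multiplication and hence is an $\ell^2$-isometry at the level of the group action.

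Next I would handle the operator $\mathcal{A}_k(\omega)$ on the cone of nonnegative functions in $\mathcal{H}_c(\omega)$. The upper bound $\mathcal{A}_k(\omega)\le 1$ should come from the fact that on $\mathcal{H}_c(\omega)$ the operator $\mathcal{L}_{\tilde\varphi}^{\omega,k}$ acts, in the $\mathcal{H}_1$-norm, essentially as a substochastic operator: writing $R(\omega,\cdot,g)=r_\omega(g)$ constant in the base, one computes $\|\mathcal{L}_{\tilde\varphi}^{\omega,k}R(\vartheta^k\omega,\cdot,\cdot)\|_{\mathcal{H}_1}$ and, using $\mathcal{L}_\varphi^\omega(\mathbf{1})=\mathbf{1}$ to collapse the base integrals, bounds it by a weighted $\ell^2(G)$-norm of the vector obtained from $(r_\omega(g))_g$ by the "group part" of the transfer operator, which is an averaging operator of the Markov-operator type on $\ell^2(G)$ and therefore has norm $\le 1$; this is where the $\ell^2$ geometry of $G$ enters, exactly as in Kesten's criterion, and this step is the technical heart of the upper bound. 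For the lower bound on $\limsup_k(\mathcal{A}_k(\omega))^{1/k}$, the strategy is to test $\mathcal{A}_k$ against the indicator-type function $R$ supported on $\mathcal{E}_\omega\times\{id\}$ (which lies in $\mathcal{H}_c(\omega)$), so that $\mathcal{L}_{\tilde\varphi}^{\omega,k}R$ restricted back to the slice over $id$ picks out precisely the sum over $\omega$-admissible words of length $k$ with $\psi_\omega^k(\alpha)=id$ that loop from a base symbol $a$ to $a$, i.e. a quantity comparable (up to the distortion constant $B_1^\omega$) to $\tilde Z_k^\omega(\tilde\varphi,a,a)$; then projecting onto the slice over $id$ can only decrease the $\mathcal{H}_1$-norm, so $\|\mathcal{L}_{\tilde\varphi}^{\omega,k}R\|_{\mathcal{H}_1(\vartheta^k\omega)}\ge$ that partition sum, and dividing by $\|R\|_{\mathcal{H}_1(\omega)}$ (a fixed finite constant) and taking $k$ along the return-time subsequence $n_a^{(j)}(\omega)$ gives $\limsup_k(\mathcal{A}_k(\omega))^{1/k}\ge\exp(\Pi_{Gur}^{(r)}(\mathcal{T},\tilde\varphi))$ by the very definition of the relative Gurevič pressure of the group extension together with Remark \ref{the remark of random group extension}(1), which guarantees the limit exists and is a.e. constant. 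The Kac lemma bound $\int_{\Omega_a}n_a^{(1)}\,d\mathbb{P}=1$ ensures the return times are a.e. finite so this subsequential limit is meaningful.

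The main obstacle I anticipate is controlling the interplay between the base dynamics and the group fibre simultaneously in the $\mathcal{H}_1$-versus-$\mathcal{H}_\infty$ norms: one must make sure that restricting attention to the constant-in-base cone $\mathcal{H}_c(\omega)$ really does reduce the operator to a genuine operator on $\ell^2(G)$ (so that the $\le 1$ bound is available), and that the distortion incurred by replacing $\sup_{y\in[\alpha]_\omega}$ in the partition function by a pointwise value is uniformly absorbed by the integrable random variable $\log B_1^\omega(\varphi)$; the measurability of $C(\omega)$ and of all intermediate quantities in $\omega$, together with checking that the exceptional null sets from ergodicity of $\vartheta$ and from the BIP/mixing hypotheses can be discarded uniformly, is the bookkeeping that makes the "for $\mathbb{P}$-a.e. $\omega$" conclusion rigorous. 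Once these are in place, the chain $\exp(\Pi_{Gur}^{(r)}(\mathcal{T},\tilde\varphi))\le\limsup_k(\mathcal{A}_k(\omega))^{1/k}\le 1=\exp(\Pi_{Gur}^{(r)}(f,\varphi))$ is the desired comparison, and it is exactly this that feeds into the amenability dichotomy in the subsequent sections.
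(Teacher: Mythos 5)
Your proposal is correct and follows essentially the same route as the paper: the $\mathcal{H}_{\infty}$-bound via the normalization $\mathcal{L}_{\varphi}^{\omega}(\mathbf{1})=\mathbf{1}$ together with the fiber Gibbs/distortion constant, the bound $\mathcal{A}_{k}(\omega)\leq 1$ by observing that on $\mathcal{H}_{c}(\omega)$ the operator reduces in the $\mathcal{H}_{1}$-norm to convolution by the probability $\mu_{\omega}\{x:\psi_{\omega}^{k}(x_{0},\ldots,x_{k-1})=h\}$ on $\ell^{2}(G)$ (the paper writes this as a triangle-inequality/translation-invariance estimate, which is the same Minkowski argument), and the lower bound by testing with $\mathbf{1}_{\{\omega\}\times\mathcal{E}_{\omega}\times\{id\}}$, restricting to the $id$-slice, and comparing with $\tilde{Z}_{k}^{\omega}(\tilde{\varphi},a,a)$ via the Gibbs property along the return times $n_{a}^{(j)}(\omega)$. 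The only cosmetic difference is that the paper's $\mathcal{H}_{\infty}$-estimate is carried out with Jensen's inequality and the Gibbs constant $C_{\varphi}(\omega)$ rather than your distortion bound $B_{1}^{\omega}(\varphi)$, which amounts to the same thing in the normalized setting.
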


\begin{proof}
The proof that $(\mathcal{H}_{p}(\omega),\|\cdot\|_{\mathcal{H}_{p}(\omega)})$ are Banach spaces by the definitions. To prove the uniform bound for $\|\mathcal{L}_{\tilde{\varphi}}^{\omega,k}\|_{\mathcal{H}_{\infty}(\vartheta^{k}\omega)}$, assume that $k\in\mathbb{N}$ and $R(\omega,\cdot,\cdot)\in\mathcal{H}_{\infty}(\omega)$. By using Jensen's inequality, set $$S_{k}\varphi(\omega,y)=\sum_{i=0}^{k-1}\varphi\circ \Theta^{i}(\omega,y)$$ for any $(\omega,y)\in\mathcal{E}$, we have
\begin{align*}
&\left\|\mathcal{L}_{\tilde{\varphi}}^{\omega,k}R(\vartheta^{k}\omega,\cdot,\cdot)\right\|_{\mathcal{H}_{\infty}(\vartheta^{k}\omega)}^{2}\\ \leq&\sum_{g\in G}\sup_{x\in\mathcal{E}_{\vartheta^{k}\omega}}\left(\sum_{\alpha\in\mathcal{W}^{k}(\omega)}\exp(S_{k}\varphi\circ\tau_{\alpha}(\vartheta^{k}\omega,x))\cdot R\circ \tilde{\tau}_{\alpha}(\vartheta^{k}\omega,x,g)\right)^{2}\\
\leq&\sum_{g\in G}\sup_{x\in\mathcal{E}_{\vartheta^{k}\omega}}\sum_{\alpha\in\mathcal{W}^{k}(\omega)}\exp(S_{k}\varphi\circ\tau_{\alpha}(\vartheta^{k}\omega,x))\cdot \left(\left\|R(\omega,\cdot,g\cdot(\psi_{\omega}^{k}(y))^{-1})\right\|_{\infty}\right)^{2}\\
\leq &C_{\varphi}(\omega)\cdot\sum_{\alpha\in\mathcal{W}^{k}(\omega)}\mu_{\omega}([\alpha]_{\omega})\cdot\|R(\omega,\cdot,\cdot)\|_{\mathcal{H}_{\infty}(\omega)}^{2}
\\=&C_{\varphi}(\omega)\cdot \|R(\omega,\cdot,\cdot)\|_{\mathcal{H}_{\infty}(\omega)}^{2},
\end{align*}
where $C_{\varphi}(\omega)$ is given by the fiber Gibbs property of $\mu_{\omega}$ on $\mathcal{E}$. Hence, $C_{\varphi}(\omega)$ is an upper bound for $\|\mathcal{L}_{\tilde{\varphi}}^{\omega,k}\|_{\mathcal{H}_{\infty}(\vartheta^{k}\omega)}$ independent of $k$. Assume that $R(\omega,\cdot,\cdot)\in\mathcal{H}_{c}(\omega)$, and set $S_{k}\tilde{\varphi}(\omega,y,g)=\sum_{i=0}^{k-1}\tilde{\varphi}\circ(\Theta_{\psi})^{i}(\omega,y,g)$ for any $(\omega,y,g)\in\mathcal{E}\times G$. It follows that
\begin{align*}
&\left\|\mathcal{L}_{\tilde{\varphi}}^{\omega,k}R(\vartheta^{k}\omega,\cdot,\cdot)\right\|_{\mathcal{H}_{1}(\vartheta^{k}\omega)}\\
\leq & \sum_{\alpha\in\mathcal{W}^{k}(\omega)}\left\|\exp(S_{k}\tilde{\varphi}\circ\tilde{\tau}_{\alpha}(\vartheta^{k}\omega,\cdot,\cdot))\cdot R\circ \tilde{\tau}_{\alpha}(\vartheta^{k}\omega,\cdot,\cdot)\right\|_{\mathcal{H}_{1}(\vartheta^{k}\omega)}\\
=&\sum_{\alpha\in\mathcal{W}^{k}(\omega)}\left(\sum_{g\in G}\left(\int\exp(S_{k}\tilde{\varphi}\circ\tilde{\tau}_{\alpha}(\vartheta^{k}\omega,\cdot,g))\cdot R\circ \tilde{\tau}_{\alpha}(\vartheta^{k}\omega,\cdot,g)d\mu_{\vartheta^{k}\omega}\right)^{2}\right)^{\frac{1}{2}}
\\=&\sum_{\alpha\in\mathcal{W}^{k}(\omega)}\left(\sum_{g\in G}\left(\mu_{\omega}([\alpha]_{\omega})\right)^{2}\cdot \left(\int R(\omega,\cdot,g\cdot (\psi_{\omega}^{k}(\alpha))^{-1})d\mu_{\omega}\right)^{2}\right)^{\frac{1}{2}}
\\ \leq& \left(\sum_{g\in G} \left(\int R(\omega,\cdot,g\cdot (\psi_{\omega}^{k}(\alpha))^{-1})d\mu_{\omega}\right)^{2}\right)^{\frac{1}{2}}
\\=&\|R(\omega,\cdot,\cdot)\|_{\mathcal{H}_{1}(\omega)}.
\end{align*}
Then by the above inequality, we have $$\mathcal{A}_{k}(\omega)\leq 1$$ for $k\in\mathbb{N}$. This obtains the result for $\mathcal{A}_{k}(\omega)$.
Observe that the fiber Gibbs property of $\mu_{\omega}$ implies that
\begin{align*}
&\left\|\mathcal{L}_{\tilde{\varphi}}^{\omega,k}{\bf 1}_{\{\omega\}\times \mathcal{E}_{\omega}\times \{id\}}(\vartheta^{k}\omega,\cdot,\cdot)\right\|_{\mathcal{H}_{1}(\vartheta^{k}\omega)} \geq\left\|\mathcal{L}_{\tilde{\varphi}}^{\omega,k}{\bf 1}_{\{\omega\}\times \mathcal{E}_{\omega}\times \{id\}}(\vartheta^{k}\omega,\cdot,id)\right\|_{\mathcal{H}_{1}(\vartheta^{k}\omega)}\\
\geq&\int\sum_{\alpha\in\mathcal{W}^{k}(\omega)}\exp(S_{k}\tilde{\varphi}\circ\tilde{\tau}_{\alpha}(\vartheta^{k}\omega,\cdot,id))\cdot {\bf 1}_{\{\omega\}\times \mathcal{E}_{\omega}\times \{id\}}\circ\tilde{\tau}_{\alpha}(\vartheta^{k}\omega,\cdot,id))d\mu_{\vartheta^{k}\omega}\\
\geq&\int \sum_{\alpha\in\mathcal{W}^{k}(\omega), \ \psi_{\omega}^{k}(\alpha)=id}\exp(S_{k}\varphi\circ\tau_{\alpha}(\vartheta^{k}\omega,\cdot))d\mu_{\vartheta^{k}\omega}
\geq\sum_{\alpha\in\mathcal{W}^{k}(\omega), \ \psi_{\omega}^{k}(\alpha)=id}\mu_{\omega}([\alpha]_{\omega})\\
\gg&\sum_{\alpha\in\mathcal{W}^{k}_{a,a}(\omega), \ \psi_{\omega}^{k}(\alpha)=id}\exp\left(\sup_{y\in[\alpha]_{\omega}}\sum_{i=0}^{k-1}\varphi\circ\Theta^{i}(\omega,y)\right)
\end{align*}
for all $a\in\mathcal{W}^{1}(\omega)$. This implies the proof.
\end{proof}

When $\Pi_{Gur}^{(r)}(\mathcal{T},\tilde{\varphi})=0$, by Proposition \ref{the estimation 1 of random operators}, we have
\begin{align*}
\limsup_{k\rightarrow\infty}(\mathcal{A}_{k}(\omega))^{\frac{1}{k}}\geq 1.
\end{align*}
Then we obtain the following result.

\begin{lemma}\label{the estimation 2 of random operators with functions}
Let $\mathcal{T}$ be a topologically mixing random group extension of a random countable Markov shift $f$ with relative BIP property by a countable group $G$. Suppose that $\varphi:\mathcal{E}\rightarrow\mathbb{R}$ is a locally fiber H\"{o}lder continuous function satisfying \eqref{the condition of locally fiber Holder continuous function}. Assume that $\Pi_{Gur}^{(r)}(\mathcal{T},\tilde{\varphi})=0$. Then there exists $n\in\mathbb{N}$ such that for any $\epsilon>0$, there exists a measurable function $\{R(\omega):\omega\in \Omega\}$ where $R(\omega)\in \mathcal{H}_{c}(\omega)$ with $R(\omega)\geq 0$ such that for $\mathbb{P}$-a.e. $\omega\in \Omega$, one has
\begin{align*}
\left\|\mathcal{L}_{\tilde{\varphi}}^{\omega,k}R(\vartheta^{k}\omega)-R(\vartheta^{k}\omega)\right\|_{\mathcal{H}_{1}(\vartheta^{k}\omega)}\leq \epsilon \cdot\|R(\omega)\|_{\mathcal{H}_{1}(\omega)}.
\end{align*}
\end{lemma}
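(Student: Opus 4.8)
The plan is to reduce everything to the ``reduced random-walk operators'' hidden inside $\mathcal{A}_{k}(\omega)$, to show that the hypothesis $\Pi_{Gur}^{(r)}(\mathcal{T},\tilde{\varphi})=0$ prevents these operators from contracting uniformly, and then to extract $R$ as a (measurable) approximate eigenfunction of the associated fibred transfer cocycle. As a preliminary normalisation I would replace $\varphi$ by the cohomologous potential $\varphi_{0}$ from the proof of Theorem~\ref{the part I of the first result}, so that $\mathcal{L}_{\varphi}^{\omega}({\bf 1})={\bf 1}$, the relative Gibbs measure $\mu$ is $\Theta$-invariant, and its disintegrations $\mu_{\omega}$ are fibrewise conformal: $\int_{f_{\omega}^{k}([\alpha]_{\omega})}\exp\big(S_{k}\varphi\circ\tau_{\alpha}(\vartheta^{k}\omega,\cdot)\big)\,d\mu_{\vartheta^{k}\omega}=\mu_{\omega}([\alpha]_{\omega})$ for all $\alpha\in\mathcal{W}^{k}(\omega)$. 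By Proposition~\ref{the estimation 1 of random operators} and $\Pi_{Gur}^{(r)}(\mathcal{T},\tilde{\varphi})=0$ we have, for $\mathbb{P}$-a.e. $\omega$, both $\mathcal{A}_{k}(\omega)\le 1$ for every $k$ and $\limsup_{k\to\infty}\mathcal{A}_{k}(\omega)^{1/k}=1$.

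The key step is the identification of $\mathcal{A}_{k}(\omega)$ with the norm of a convolution operator on $\ell^{2}(G)$. For $R\in\mathcal{H}_{c}(\omega)$ with $R\ge 0$, write $R(\omega,\cdot,g)\equiv r(\omega)_{g}$ and $r(\omega)=(r(\omega)_{g})_{g}\in\ell^{2}(G)$. Since $\mathcal{L}_{\tilde{\varphi}}^{\omega,k}R\ge 0$, the $L^{1}(\mu_{\vartheta^{k}\omega})$-norm of its $g$-section equals its integral, and the fibrewise conformality gives
\begin{align*}
\int \mathcal{L}_{\tilde{\varphi}}^{\omega,k}R(\vartheta^{k}\omega,\cdot,g)\,d\mu_{\vartheta^{k}\omega}=\sum_{h\in G}p_{\omega}^{k}(h)\,r(\omega)_{gh^{-1}},\qquad p_{\omega}^{k}(h):=\!\!\sum_{\alpha\in\mathcal{W}^{k}(\omega):\,\psi_{\omega}^{k}(\alpha)=h}\!\!\mu_{\omega}([\alpha]_{\omega}),
\end{align*}
where $p_{\omega}^{k}$ is a probability measure on $G$. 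Hence $\|\mathcal{L}_{\tilde{\varphi}}^{\omega,k}R(\vartheta^{k}\omega)\|_{\mathcal{H}_{1}(\vartheta^{k}\omega)}$ is the $\ell^{2}(G)$-norm of the $p_{\omega}^{k}$-convolution of $r(\omega)$; since convolution by a non-negative kernel attains its operator norm on the positive cone of $\ell^{2}(G)$, it follows that $\mathcal{A}_{k}(\omega)$ is exactly the operator norm of $p_{\omega}^{k}$-convolution on $\ell^{2}(G)$. Comparing $\mu_{\omega}([\beta\gamma]_{\omega})$ with $\mu_{\omega}([\beta]_{\omega})\mu_{\vartheta^{j}\omega}([\gamma]_{\vartheta^{j}\omega})$ via the fibre Gibbs property and splicing admissible words via topological mixing together with the relative BIP property, one obtains $p_{\omega}^{j+k}\le C_{j}(\omega)\,p_{\omega}^{j}\ast p_{\vartheta^{j}\omega}^{k}$ with $C_{j}(\omega)$ a tempered random constant ($\mathbb{P}$-integrable logarithm, growing sub-exponentially along $\vartheta$-orbits by integrability of $\kappa$ and $\log C_{\varphi}$), hence $\mathcal{A}_{j+k}(\omega)\le C_{j}(\omega)\,\mathcal{A}_{j}(\omega)\,\mathcal{A}_{k}(\vartheta^{j}\omega)$. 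Absorbing the error into a Birkhoff sum turns $\log\mathcal{A}_{k}$ into a genuine subadditive cocycle over the ergodic $\vartheta$, so Kingman's subadditive ergodic theorem shows that $\lim_{k}\frac1k\log\mathcal{A}_{k}(\omega)$ exists a.e.; since $\limsup_{k}\frac1k\log\mathcal{A}_{k}(\omega)=0$ this limit equals $0$, so $\mathcal{A}_{k}(\omega)$ does not decay exponentially, and in fact $\mathcal{A}_{n}(\omega)\ge c(\omega)>0$ uniformly in $n$ for some random $c(\omega)$ — the analogue in this random setting of the fact, used by Stadlbauer in \cite{Stadlbauer2013An}, that the reduced transfer operator stays bounded away from $0$.

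It remains to convert this into the asserted almost-invariance. Fix $n$ (say the least one for which $\mathcal{A}_{n}(\omega)$ is large enough on a full-measure set) and let $\epsilon>0$. Using that $\mathcal{A}_{n}(\omega)$ is (close to) the norm of $p_{\omega}^{n}$-convolution, choose, measurably in $\omega$ by measurable selection, a non-negative near-maximiser $\xi_{\omega}\in\ell^{2}(G)$, let $R(\omega)\in\mathcal{H}_{c}(\omega)$ be the fibre-constant field with value $\xi_{\omega}$, and --- crucially exploiting that the statement permits $R(\vartheta^{n}\omega)\neq R(\omega)$ --- propagate $R$ along the $\vartheta^{n}$-orbit so that the value of $R(\vartheta^{n}\omega)$ is the $p_{\omega}^{n}$-convolution of $\xi_{\omega}$ (telescoping along the orbit, which keeps $R$ measurable). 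With this choice the fibre-average of $\mathcal{L}_{\tilde{\varphi}}^{\omega,n}R(\vartheta^{n}\omega)$ is exactly $R(\vartheta^{n}\omega)$, so
\begin{align*}
\big\|\mathcal{L}_{\tilde{\varphi}}^{\omega,n}R(\vartheta^{n}\omega)-R(\vartheta^{n}\omega)\big\|_{\mathcal{H}_{1}(\vartheta^{n}\omega)}\le\Big\|\mathcal{L}_{\tilde{\varphi}}^{\omega,n}R(\vartheta^{n}\omega)-\textstyle\int \mathcal{L}_{\tilde{\varphi}}^{\omega,n}R(\vartheta^{n}\omega,\cdot,\cdot)\,d\mu_{\vartheta^{n}\omega}\Big\|_{\mathcal{H}_{1}(\vartheta^{n}\omega)},
\end{align*}
and the right-hand side is the oscillation of $\mathcal{L}_{\tilde{\varphi}}^{\omega,n}R(\vartheta^{n}\omega)$ along the fibre $\mathcal{E}_{\vartheta^{n}\omega}$. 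This oscillation is where I expect the main difficulty to lie: a priori it is bounded only by a fixed Gibbs-distortion multiple of $\|R(\omega)\|_{\mathcal{H}_{1}(\omega)}$, so to make it $\le\epsilon\|R(\omega)\|_{\mathcal{H}_{1}(\omega)}$ one must exploit that a near-maximiser of a (nearly) norm-one non-negative convolution operator is itself almost invariant under translation by the finitely supported law $p_{\omega}^{n}$, so that the $x$-dependent Gibbs weights $\exp(S_{n}\varphi\circ\tau_{\alpha}(\vartheta^{n}\omega,x))$ act on $\xi_{\omega}$ essentially as their $\mu_{\vartheta^{n}\omega}$-averages; this, together with the measurable-selection argument along $\vartheta^{n}$-orbits, mirrors the deterministic estimates of \cite{Stadlbauer2013An}. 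Once it is carried out, the claimed inequality holds for $\mathbb{P}$-a.e. $\omega$.
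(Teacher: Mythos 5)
Your reduction of $\mathcal{A}_{k}(\omega)$ to the operator norm of convolution by the probability measure $p_{\omega}^{k}(h)=\sum_{\alpha\in\mathcal{W}^{k}(\omega),\,\psi_{\omega}^{k}(\alpha)=h}\mu_{\omega}([\alpha]_{\omega})$ on $\ell^{2}(G)$ is correct (it is essentially the content of the operators $T_{n}^{\omega}$ in Lemma \ref{the description of spectral of random group extension}), and the hypothesis does give $\limsup_{k}\mathcal{A}_{k}(\omega)^{1/k}=1$ via Proposition \ref{the estimation 1 of random operators}. But the proposal has a genuine gap at precisely the point the lemma is about. First, from your Kingman argument you only obtain $\lim_{k}\frac{1}{k}\log\mathcal{A}_{k}(\omega)=0$; this does not imply $\mathcal{A}_{n}(\omega)\geq c(\omega)>0$ uniformly in $n$, and certainly not that $\mathcal{A}_{n}(\omega)$ is close to $1$ for some fixed $n$ (subexponential decay such as $\mathcal{A}_{k}\sim e^{-\sqrt{k}}$ is fully compatible with your conclusion). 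Yet your extraction of an almost-invariant vector needs the convolution norm to be nearly $1$: only then does near-maximality force the translates $\xi_{\omega}(\cdot h^{-1})$, $h\in\mathrm{supp}\,p_{\omega}^{n}$, to almost align. Second, even granting that, your construction of the measurable field $R$ by ``propagating along the $\vartheta^{n}$-orbit'' is not available: an ergodic invertible $\vartheta$ admits no measurable choice of base points on orbits, and if instead you select near-maximisers $\xi_{\omega}$ independently at each fibre there is no control whatsoever of $\|p_{\omega}^{n}\ast\xi_{\omega}-\xi_{\vartheta^{n}\omega}\|_{2}$, which is exactly the quantity the lemma bounds. Finally, the residual ``oscillation'' estimate in $\mathcal{H}_{1}$ that you explicitly defer is not a technical remainder but the analytic heart of the statement.

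The paper closes this gap by arguing in the opposite direction. It sets $\delta_{1}(\omega)$ to be the infimum of the defect $\|\mathcal{L}_{\tilde{\varphi}}^{\omega,k}R(\vartheta^{k}\omega)-R(\vartheta^{k}\omega)\|_{\mathcal{H}_{1}(\vartheta^{k}\omega)}/\|R(\omega)\|_{\mathcal{H}_{1}(\omega)}$ over nonnegative fields in $\mathcal{H}_{c}$, and shows $\delta_{1}(\omega)=0$ by contradiction: if $\delta_{1}(\omega)>0$, then every such $R$ has, on a finite set of cylinders of definite mass, a translate that differs from $R$ by a definite amount in $\ell^{2}(G)$; uniform convexity (``rotundity'') of $\ell^{2}(G)$ then yields a uniform $\delta_{2}(\omega)>0$ of strict contraction when two spliced admissible words $u,z$ (built from the relative BIP loops, one with $\psi_{\omega}^{2k}(u)=id$, one carrying $\psi_{\omega}^{k}(\alpha_{R})$) are averaged, and an inductive word-splitting/covering construction (Steps 2--3, with the sets $\mathcal{W}^{\dag}(\omega)$ and $\mathcal{V}_{\Lambda}(\omega)$) converts this into $\mathcal{A}_{nmk}(\omega)\leq(1-2\gamma(\omega)\delta_{3}(\omega))^{n}$, i.e.\ exponential decay, contradicting $\limsup_{k}\mathcal{A}_{k}(\omega)^{1/k}\geq\exp\bigl(\Pi_{Gur}^{(r)}(\mathcal{T},\tilde{\varphi})\bigr)=1$. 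That convexity-plus-combinatorics step, which you point to but do not carry out, is what actually produces the almost-invariant fields, and without it (or a worked-out substitute giving convolution norms near $1$ at a fixed scale together with a measurable selection) the proposal does not prove the lemma.
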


\begin{proof}
Since $\mathcal{T}$ is a topologically mixing random group extension of a random countable Markov shift $f$ with relative BIP property by a countable group $G$, there exists $k\in\mathbb{N}$ and a measurable family $\{\mathcal{J}(\omega),\omega\in\Omega\}$ where $\mathcal{J}(\omega)\in\mathcal{W}^{k}(\omega)$ such that for $\mathbb{P}$-a.e. $\omega\in \Omega$, each pair $(\beta,\beta')$ with $\beta\in \mathcal{I}_{bip}(\omega)$ and $\beta'\in\mathcal{I}_{bip}(\vartheta^{k}\omega)$, there exists $u_{\beta,\beta'}\in \mathcal{J}(\omega)$ such that $u_{\beta,\beta'}\in \mathcal{W}^{k}(\omega)$ and $\psi_{\omega}^{k}(u_{\beta,\beta'})=id$. For $\omega\in \Omega$, define
\begin{align*}
\delta_{1}(\omega):=\inf\left\{\frac{\left\|\mathcal{L}_{\tilde{\varphi}}^{\omega,k}R(\vartheta^{k}\omega)-R(\vartheta^{k}\omega)\right\|_{\mathcal{H}_{1}(\vartheta^{k}\omega)}}{\|R(\omega)\|_{\mathcal{H}_{1}(\omega)}}:{ R(\omega)\in \mathcal{H}_{c}(\omega), R(\omega)\geq 0 \atop 0\neq R(\omega)\ \text{is measurable in} \ \Omega}\right\}.
\end{align*}

{\bf Step 1.} Firstly, there exists a finite set $\mathcal{W}^{*}(\omega)\subset \mathcal{W}^{k}(\omega)$ with
\begin{align*}
\sum_{\alpha\in\mathcal{W}^{k}(\omega)\setminus \mathcal{W}^{*}(\omega)}\mu_{\omega}([\alpha]_{\omega})\leq \frac{1}{4}\delta_{1}(\omega).
\end{align*}
For $R(\omega)\in\mathcal{H}_{c}(\omega)$, we have
\begin{align*}
&\left\|\sum_{\alpha\in\mathcal{W}^{k}(\omega)\setminus \mathcal{W}^{*}(\omega)}\exp(S_{k}\tilde{\varphi}\circ\tilde{\tau}_{\alpha}(\vartheta^{k}\omega))\cdot R\circ\tilde{\tau}_{\alpha}(\vartheta^{k}\omega)\right\|_{\mathcal{H}_{1}(\vartheta^{k}\omega)}\\
\leq&\sum_{\alpha\in\mathcal{W}^{k}(\omega)\setminus \mathcal{W}^{*}(\omega)}\mu_{\omega}([\alpha]_{\omega})\cdot\|R(\omega)\|_{\mathcal{H}_{1}(\omega)} \leq  \frac{1}{4}\delta_{1}(\omega)\cdot\|R(\omega)\|_{\mathcal{H}_{1}(\omega)}.
\end{align*}
Without loss of generality, we assume that $\mathcal{L}_{\varphi}^{\omega}({\bf 1})={\bf 1}$ and then $\Pi_{Gur}^{(r)}(f,\varphi)=0$. By using $\mathcal{L}_{\varphi}^{\omega}({\bf 1})={\bf 1}$, we have
\begin{align*}
&\sum_{\alpha\in\mathcal{W}^{*}(\omega)}\mu_{\omega}([\alpha]_{\omega})\cdot\left\|(R\circ\tilde{\tau}_{\alpha}(\vartheta^{k}\omega)-R(\vartheta^{k}\omega))\cdot {\bf 1}_{f_{\omega}^{k}([\alpha]_{\omega})\times G}\right\|_{\mathcal{H}_{1}(\vartheta^{k}\omega)} \\ \geq&\frac{1}{2}\delta_{1}(\omega)\cdot\|R(\omega)\|_{\mathcal{H}_{1}(\omega)}.
\end{align*}
It follows that there exists $\alpha_{R}\in\mathcal{W}^{*}(\omega)$ such that
\begin{align*}
\left\|(R\circ\tilde{\tau}_{\alpha_{R}}(\vartheta^{k}\omega)-R(\vartheta^{k}\omega))\cdot {\bf 1}_{f_{\omega}^{k}([\alpha_{R}]_{\omega})\times G}\right\|_{\mathcal{H}_{1}(\vartheta^{k}\omega)}\geq\frac{1}{2}\delta_{1}(\omega)\cdot\|R(\omega)\|_{\mathcal{H}_{1}(\omega)}.
\end{align*}
Moreover, the relative BIP property implies that $\mu_{\vartheta^{k}\omega}(f_{\omega}^{k}([\alpha]_{\omega}))$ is uniformly bounded from below for all $\alpha\in \mathcal{W}^{k}(\omega)$.
Then for any $x\in \mathcal{E}_{\omega}$, we have
\begin{align*}
&\left\|(R\circ\tilde{\tau}_{\alpha_{R}}(\vartheta^{k}\omega)-R(\vartheta^{k}\omega))\cdot {\bf 1}_{f_{\omega}^{k}([\alpha_{R}]_{\omega})\times G}\right\|_{\mathcal{H}_{1}(\vartheta^{k}\omega)}\\
\gg&\left\|R(\vartheta^{k}\omega,f_{\omega}^{k}x,\cdot)-R(\vartheta^{k}\omega,f_{\omega}^{k}x,\cdot(\psi_{\omega}^{k}(\alpha_{R}))^{-1})\right\|_{l^{2}(G)}.
\end{align*}

Next, for any $\omega\in\Omega$, we construct the $\omega$-admissible words.
Note that for any $x\in\mathcal{E}_{\omega}$, $\gamma\in\mathcal{I}_{bip}(\vartheta^{k}\omega)$ satisfying $[\gamma]_{\vartheta^{k}\omega}\subset f_{\omega}^{k}([\alpha_{R}]_{\omega})$, there exist $\alpha_{\gamma}\in\mathcal{J}(\omega)$ and $\alpha_{x}\in\mathcal{J}(\vartheta^{k}\omega)$ such that $f_{\omega}^{2k}x\in f_{\vartheta^{k}\omega}^{k}[\alpha_{x}]_{\vartheta^{k}\omega}$, $\alpha_{\gamma}\alpha_{x}$ and $\alpha_{R}\alpha_{x}$ are $\omega$-admissible, and then belongs to $\mathcal{W}^{2k}(\omega)$. Set
\begin{align*}
u=\alpha_{\gamma}\alpha_{x} \ \ \text{and} \ \ z=\alpha_{R}\alpha_{x}.
\end{align*}
Since $R(\omega,\cdot,\cdot)\in\mathcal{H}_{c}(\omega)$ and
\begin{align*}
\psi_{\omega}^{k}(\alpha_{\gamma})=\psi_{\vartheta^{k}\omega}^{k}(\alpha_{x})=id,
\end{align*}
we have
\begin{align*}
\psi_{\omega}^{2k}(u)=id \ \ \text{and} \ \ \psi_{\omega}^{2k}(z)=\psi_{\omega}^{k}(\alpha_{R}).
\end{align*}
The rotundity implies that there exists a unform constant $\delta_{2}(\omega)>0$ with
\begin{align*}
\frac{1}{2}\left\|R\circ\tilde{\tau}_{u}(\vartheta^{2k}\omega,f_{\omega}^{2k}x,\cdot)+R\circ\tilde{\tau}_{z}(\vartheta^{2k}\omega,f_{\omega}^{2k}x,\cdot)\right\|_{l^{2}(G)}\leq (1-\delta_{2}(\omega))\|R(\omega)\|_{\mathcal{H}_{1}(\omega)}
\end{align*}
for any $x\in\mathcal{E}_{\omega}$ satisfying $f_{\omega}^{2k}x\in f_{\vartheta^{k}\omega}^{k}[\alpha_{x}]_{\vartheta^{k}\omega}$ and $\mathbb{P}$-a.e. $\omega\in\Omega$.
By substituting $u$ and $z$ with $\alpha u$ and $\alpha z$, respectively, for some $\vartheta^{-(m-2)k}\omega$-admissible word $\alpha$ in $\mathcal{J}(\vartheta^{-(m-2)k}\omega)\times \mathcal{J}(\vartheta^{-(m-3)k}\omega)\times \cdots\times \mathcal{J}(\vartheta^{-2k}\omega)\times \mathcal{J}(\vartheta^{-k}\omega)$ and $m\in\mathbb{N}$ to be specified later,  there exists a family $\{\mathcal{W}^{\dag}(\omega),\omega\in \Omega\}$ where $\mathcal{W}^{\dag}(\omega)$ is a finite set and $\mathcal{W}^{\dag}(\omega)\subset \mathcal{W}^{mk}(\omega)$ with the following properties. For all $\alpha_{1}\in\mathcal{W}^{n_{1}}(\omega)$ and $\alpha_{2}\in\mathcal{W}^{n_{2}}(\vartheta^{(m-2)k+n_{1}}\omega)$, there exist $u(\alpha_{1},R,\alpha_{2})$ and $z(\alpha_{1},R,\alpha_{2})$ in $\mathcal{W}^{\dag}(\vartheta^{n_{1}}\omega)$ satisfying the following conditions:
\begin{itemize}
  \item[(1)] the corresponding estimate holds for $u:=u(\alpha_{1},R,\alpha_{2})$ and $z:=z(\alpha_{1},R,\alpha_{2})$ with $f_{\omega}^{\vartheta^{(m-2)k+n_{1}}}x\in f_{\vartheta^{(m-2)k+n_{1}}\omega}^{n_{1}}[\alpha_{x}]_{\vartheta^{(m-2)k+n_{1}}\omega}$ by taking $\omega$ with $\vartheta^{n_{1}}\omega$;
  \item[(2)] the first $(m-2)k$ letters of $u(\alpha_{1},R,\alpha_{2})$ and $z(\alpha_{1},R,\alpha_{2})$ coincide;
  \item[(3)] $\alpha_{1}u(\alpha_{1},R,\alpha_{2})\alpha_{2}$ and $\alpha_{1}z(\alpha_{1},R,\alpha_{2})\alpha_{2}$ are $\omega$-admissible.
\end{itemize}

{\bf Step 2.}
Secondly, choose $m\in\mathbb{N}$ such that for $\mathbb{P}$-a.e. $\omega\in \Omega$,
\begin{align*}
(1-\delta_{2}(\omega))^{\frac{1}{2}}&\leq \frac{\exp(S_{n}\varphi\circ \tau_{\alpha_{1}}(\tau_{u(\alpha_{1},R,\alpha_{2})}(\vartheta^{(m-2)k+n_{1}}\omega,f_{\omega}^{\vartheta^{(m-2)k+n_{1}}}x)))}{\exp(S_{n}\varphi\circ \tau_{\alpha_{1}}(\tau_{z(\alpha_{1},R,\alpha_{2})}(\vartheta^{(m-2)k+n_{1}}\omega,f_{\omega}^{\vartheta^{(m-2)k+n_{1}}}x)))}\\&\leq (1-\delta_{2}(\omega))^{-\frac{1}{2}}
\end{align*}
for any $n\in\mathbb{N}$ and $\alpha_{1}\in\mathcal{W}^{n}(\omega)$. Since $|\mathcal{W}^{\dag}(\omega)|<\infty,\omega\in \Omega$, we have
\begin{align*}
2\gamma(\omega):=\inf\left\{\exp(S_{mk}\varphi\circ\tau_{u}(\vartheta^{mk}\omega,y)):y\in f_{\omega}^{mk}([u]_{\omega}),u\in\mathcal{W}^{\dag}(\omega)\right\}>0.
\end{align*}
By dividing each $u\in\mathcal{W}^{\dag}(\omega)$ into two words $u_{1}$ and $u_{2}$ and setting $$\exp(S_{mk}\varphi\circ\tau_{u_{2}}(\vartheta^{mk}\omega,y)):=\exp(S_{mk}\varphi\circ\tau_{u}(\vartheta^{mk}\omega,y))-\gamma(\omega)$$ and $\exp(S_{mk}\varphi\circ\tau_{u_{1}}(\vartheta^{mk}\omega,y)):=\gamma(\omega)$ for each $y\in f_{\omega}^{mk}([u]_{\omega})$, without loss of generality we assume that
\begin{align*}
\exp(S_{mk}\varphi\circ\tau_{u}(\vartheta^{mk}\omega,y))=\gamma(\omega) \ \text{for any}  \ y\in f_{\omega}^{mk}([u]_{\omega}) \ \text{and} \ u\in\mathcal{W}^{\dag}(\omega).
\end{align*}

Given the function $R(\omega)\in\mathcal{H}_{c}(\omega)$ and $(k+1)$ finite $\omega$-admissible words $\alpha_{i}\in\mathcal{W}^{n_{i}}(\omega)$ for any $i=0,1,\ldots,k$. For $j=1,\ldots,k$, define $R_{j}(\vartheta^{s_{j}}\omega,f_{\omega}^{s_{j}}x,g)$ as
\begin{align*}
\sum_{(i_{1},i_{2},\ldots,i_{j-1})\in\{1,2\}^{j-1}}R\circ \tilde{\tau}_{\alpha_{0}u_{1}^{(i_{1})}\alpha_{1}\cdots u_{j-1}^{(i_{j-1})}\alpha_{j-1}}(\vartheta^{s_{j}}\omega,f_{\omega}^{s_{j}}x,g\cdot(\psi_{\omega}^{s_{j}}(x))^{-1})
\end{align*}
where $s_{j}:=n_{0}+\cdots+n_{j-1}+mk(j-1)$,
\begin{align*}
u_{1}^{(1)}:=u(\alpha_{0},R_{1},\alpha_{1}), \ \ u_{1}^{(2)}:=z(\alpha_{0},R_{1},\alpha_{1})\in\mathcal{W}^{\dag}(\vartheta^{n_{0}}\omega),
\end{align*}
and for $j=2,3,\ldots,k$,
\begin{align*}
u_{j}^{(1)}:=u(\alpha_{j-1},R_{j},\alpha_{j}), \ \ u_{j}^{(2)}:=z(\alpha_{j-1},R_{j},\alpha_{j})\in\mathcal{W}^{\dag}(\vartheta^{n_{0}+\cdots+n_{j-1}+mk(j-1)}\omega).
\end{align*}
Write $\hat{R}(g):=R(\omega,x,g)$ for $x\in\mathcal{E}_{\omega}$ and $R_{\alpha}(\omega,y,g):=R(\tau_{\alpha}(\vartheta^{n}\omega,y),g\cdot (\psi_{\omega}(\alpha))^{-1})$ for $y\in\mathcal{E}_{\vartheta^{n}\omega}$ and a finite word $\alpha\in\mathcal{W}^{n}(\omega)$. Let $N:=n_{0}+n_{1}+\cdots+n_{n}+nmk$. Then we have
\begin{align*}
&\left\|\sum_{(i_{1},\ldots,i_{n})\in\{1,2\}^{n}}\exp(S_{N}\varphi\circ\tau_{\alpha_{0}u_{1}^{(i_{1})}\alpha_{1}\cdots u_{n}^{(i_{n})}\alpha_{n}})\cdot R\circ\tilde{\tau}_{\alpha_{0}u_{1}^{(i_{1})}\alpha_{1}\cdots u_{n}^{(i_{n})}\alpha_{n}}(\vartheta^{N}\omega)\right\|_{l^{2}(G)}\\
\leq &\max_{(i_{1},i_{2},\ldots,i_{n})\in\{1,2\}^{n}}\exp(S_{N}\varphi\circ\tau_{\alpha_{0}u_{1}^{(i_{1})}\alpha_{1}\cdots u_{n}^{(i_{n})}\alpha_{n}}(\vartheta^{N}\omega))\cdot 2(1-\delta_{2}(\omega))\\
&\cdot \left\|\sum_{i_{n-1}\in\{1,2\}}\hat{R}_{n}(\cdot (\psi_{\vartheta^{N-mk-n_{n}}\omega}^{mk}(u_{n-1}^{(i_{n-1})}))^{-1})\right\|_{l^{2}(G)}.
\end{align*}
Combining with $S_{mk}\varphi\circ\tau_{u}(\vartheta^{mk}\omega,f_{\omega}^{mk}x)=\gamma(\omega)$ for all $x\in f_{\omega}^{mk}([u]_{\omega}) $ and $u\in\mathcal{W}^{\dag}(\omega)$, it follows that
\begin{align*}
&\left\|\sum_{(i_{1},\ldots,i_{n})\in\{1,2\}^{n}}\exp(S_{N}\varphi\circ\tau_{\alpha_{0}u_{1}^{(i_{1})}\alpha_{1}\cdots u_{n}^{(i_{n})}\alpha_{n}})\cdot R\circ\tilde{\tau}_{\alpha_{0}u_{1}^{(i_{1})}\alpha_{1}\cdots u_{n}^{(i_{n})}}(\vartheta^{N}\omega)\right\|_{l^{2}(G)}\\
\leq&\max_{(i_{1},\ldots,i_{n})\in\{1,2\}^{n}}\exp(S_{N}\varphi\circ\tau_{\alpha_{0}u_{1}^{(i_{1})}\alpha_{1}\cdots u_{n}^{(i_{n})}\alpha_{n}}(\vartheta^{N}\omega))\cdot (1-\delta_{3}(\omega))^{n}\cdot\|R(\omega)\|_{\mathcal{H}_{1}(\omega)},
\end{align*}
where $\delta_{3}(\omega):=1-(1-\delta_{2}(\omega))^{\frac{1}{2}}$.

{\bf Step 3.} Finally, fix $R(\omega)\in\mathcal{H}_{c}(\omega),\omega\in \Omega$ and $x\in\mathcal{E}_{\omega}$. Let $\mathcal{D}$ be the set of all subsets of $\{1,2,\ldots,n\}$. For each $\Lambda:=\{k_{1},k_{2},\ldots,k_{d}\}\in\mathcal{D}$, the subset $\mathcal{V}_{\Lambda}(\omega)$ of $\mathcal{W}^{nmk}(\omega)$ is defined as follows:
a word $\alpha=\alpha_{1}\alpha_{2}\cdots \alpha_{n}\in\mathcal{W}^{nmk}(\omega)$ is an element of $\mathcal{V}_{\Lambda}(\omega)$ if and only if there exist $\alpha_{k_{j}}^{(i_{j})}$, for $i_{j}=1,2$ and $j=1,2,\ldots,d$, such that $\alpha_{k_{j}}=\alpha_{k_{j}}^{(1)}$ and
\begin{align*}
&\left\|\sum_{\Gamma}\exp(S_{nmk}\varphi\circ\tau_{v})(\vartheta^{\sum_{i\notin \eta}n_{i}+mk|\eta|}\omega))\cdot R\circ\tilde{\tau}_{v}(\vartheta^{\sum_{i\notin \eta}n_{i}+mk|\eta|}\omega)\right\|_{l^{2}(G)}\\ \leq & \sum_{\Gamma}\exp(S_{nmk}\varphi\circ\tau_{v}(\vartheta^{\sum_{i\notin \eta}n_{i}+mk|\eta|}\omega))(1-\delta_{3}(\omega))^{|\eta|}\|R(\omega)\|_{\mathcal{H}_{1}(\omega)},
\end{align*}
where $\Gamma$ is taken over all $v=(v_{1},v_{2},\ldots,v_{n})$ with $v_{i}=\alpha_{i}$ for $i\notin \Lambda$ and $v_{i}\in\{\alpha_{i}^{(1)},\alpha_{i}^{(2)}\}$ for $i\in\Lambda$. Note that the construction of $u_{i}^{(1)}$ and $u_{i}^{(2)}$ and the estimation imply that $\{\mathcal{V}_{\Lambda}(\omega):\Lambda\in\mathcal{D}\}$ is a covering of $\mathcal{W}^{nmk}(\omega)$. Then
\begin{align*}
\mathcal{V}_{\Lambda}^{*}(\omega):=\mathcal{V}_{\Lambda}(\omega)\setminus \bigcup_{\Lambda\subset\Lambda',\Lambda\neq\Lambda'}\mathcal{V}_{\Lambda'}(\omega)
\end{align*}
defines a partition of $\mathcal{W}^{nmk}(\omega)$. Moreover, for $\Lambda=\{k_{1},k_{2},\ldots,k_{d}\}$, $j\in\{1,2,\ldots,d\}$ such that $k_{j}-1\notin \Lambda$ and $\alpha_{1}\alpha_{2}\cdots \alpha_{n}\in \mathcal{V}_{\Lambda}^{*}(\omega)$, one has
\begin{align*}
&\sum_{v_{1}\cdots v_{k_{j}-1}\alpha_{k_{j}}\cdots\alpha_{n}\in \mathcal{V}_{\Lambda}^{*}(\omega)}\exp(S_{mk}\varphi\circ\tau_{v_{k_{j-1}}\alpha_{k_{j}}\cdots\alpha_{n}}(\vartheta^{mk(k_{j}-2)}\omega,f_{\omega}^{mk(k_{j}-2)}x))\leq 1-2\gamma(\omega).
\end{align*}
It follows that
\begin{align*}
\left\|\mathcal{L}_{\varphi\circ\pi_{1,2}}^{\omega,nmk}R(\vartheta^{nmk}\omega,f^{nmk}_{\omega}(x),\cdot)\right\|_{l^{2}(G)}\leq (1-2\gamma(\omega)\delta_{3}(\omega))^{n}\cdot\left\|R(\omega)\right\|_{\mathcal{H}_{1}(\omega)}.
\end{align*}
This is a contradiction by using Jensen's inequality and Proposition \ref{the estimation 1 of random operators}. This implies that $\delta_{1}(\omega)=0$.

\end{proof}

In the following, we prove Theorem \ref{the part 2 of the first theorem}.

\begin{proof}[Proof of Theorem \ref{the part 2 of the first theorem}]
It suffices to prove the case of $\mathcal{L}_{\varphi}^{\omega}({\bf 1})={\bf 1}$ and then $\Pi_{Gur}^{(r)}(f,\varphi)=0$. If not, there exists locally fiber H\"{o}lder continuous $h:\mathcal{E}\rightarrow \mathbb{R}^{+}$ which $h(\omega,\cdot),\omega\in \Omega$ are bounded away from zero and infinity such that
\begin{align*}
\mathcal{L}_{\varphi}^{\omega}(h(\omega,\cdot))=\lambda_{\omega}\cdot h(\vartheta\omega,\cdot)
\end{align*}
where $\int\log\lambda_{\omega}d\mathbb{P}(\omega)=\Pi_{Gur}^{(r)}(f,\varphi)$.
Define $\varphi_{0}(\omega,\cdot):=\varphi(\omega,\cdot)+\log h(\omega,\cdot)-\log h\circ\Theta(\omega,\cdot)-\log\lambda_{\omega}$. Then we have
$\mathcal{L}_{\varphi_{0}}^{\omega}({\bf 1})={\bf 1}$
and then $\Pi_{Gur}^{(r)}(f,\varphi_{0})=0$. By the condition of $\Pi_{Gur}^{(r)}(f,\varphi)=\Pi_{Gur}^{(r)}(\mathcal{T},\tilde{\varphi})$, we have $\Pi_{Gur}^{(r)}(\mathcal{T},\tilde{\varphi}_{0})=\Pi_{Gur}^{(r)}(\mathcal{T},\tilde{\varphi})-\Pi_{Gur}^{(r)}(f,\varphi)=0$. Then we show the proof in the case of $\Pi_{Gur}^{(r)}(f,\varphi)=0$. Let $K$ be a finite subset of $G$. By the topologically mixing property of $\mathcal{T}$, there exists $m\in\mathbb{N}$ such that $m$ is a multiple of $n$ in Lemma \ref{the estimation 2 of random operators with functions} and $K\subset \{\psi_{\omega}^{m}(v):v\in \mathcal{W}^{m}(\omega)\}$ for $\mathbb{P}$-a.e. $\omega\in \Omega$. Then there exists a finite subset $\mathcal{W}_{K}(\omega)$ of $\mathcal{W}^{m}(\omega)$ with $K=\{\psi_{\omega}^{m}(v):v\in \mathcal{W}_{K}(\omega)\}$. By Lemma \ref{the estimation 2 of random operators with functions}, there exists a sequence of positive measurable function $\{R_{k}(\omega):\omega\in \Omega\}$ where $R_{k}(\omega)\in \mathcal{H}_{c}(\omega)$ with $R_{k}(\omega)\geq 0$ and $\|R_{k}(\omega)\|_{\mathcal{H}_{1}(\omega)}=1$ such that for $\mathbb{P}$-a.e. $\omega\in \Omega$, one has
\begin{align*}
\lim_{k\rightarrow\infty}\left\|\mathcal{L}_{\tilde{\varphi}}^{\omega,m}R_{k}(\vartheta^{m}\omega)-R_{k}(\vartheta^{m}\omega)\right\|_{\mathcal{H}_{1}(\vartheta^{m}\omega)}=0.
\end{align*}
Then $\lim_{k\rightarrow\infty}\left\|\mathcal{L}_{\tilde{\varphi}}^{\omega,m}R_{k}(\vartheta^{m}\omega)\right\|_{\mathcal{H}_{1}(\vartheta^{m}\omega)}=0$. We claim that for any $v\in \mathcal{W}_{K}(\omega)$, one has
$
\left\|\left(R_{k}\circ\tilde{\tau}_{v}(\vartheta^{m}\omega)-R_{k}(\vartheta^{m}\omega)\right)\cdot {\bf 1}_{f_{\omega}^{m}([v]_{\omega})\times G}\right\|_{\mathcal{H}_{1}(\vartheta^{m}\omega)}\rightarrow0.
$
If not, there exists $v\in \mathcal{W}_{K}(\omega)$ such that $$\liminf_{k\rightarrow\infty}\left\|\left(R_{k}\circ\tilde{\tau}_{v}(\vartheta^{m}\omega)-R_{k}(\vartheta^{m}\omega)\right)\cdot {\bf 1}_{f_{\omega}^{m}([v]_{\omega})\times G}\right\|_{\mathcal{H}_{1}(\vartheta^{m}\omega)}>0.$$
By the process in the first step of proof of Lemma \ref{the estimation 2 of random operators with functions}, one can imply that $$\left\|\mathcal{L}_{\tilde{\varphi}}^{\omega,m}R_{k}(\vartheta^{m}\omega)\right\|_{\mathcal{H}_{1}(\vartheta^{m}\omega)}$$ bounded away from 1, which is a contradiction. Choose a subsequence, for any $v\in \mathcal{W}_{K}(\omega)$, we have
\begin{align*}
\lim_{k\rightarrow\infty}\left\|\left(R_{k}\circ\tilde{\tau}_{v}(\vartheta^{m}\omega)-R_{k}(\vartheta^{m}\omega)\right)\cdot {\bf 1}_{f_{\omega}^{m}([v]_{\omega})\times G}\right\|_{\mathcal{H}_{1}(\vartheta^{m}\omega)}=0.
\end{align*}
Let $h\in G$. Then we have
\begin{align*}
\left\|R_{k}^{2}(\vartheta^{m}\omega,x,\cdot)-R_{k}^{2}(\vartheta^{m}\omega,x,\cdot h)\right\|_{1}
\leq 2\left\|R_{k}(\vartheta^{m}\omega,x,\cdot)-R_{k}(\vartheta^{m}\omega,x,\cdot h)\right\|_{2}.
\end{align*}
Fix $k\in\mathbb{N}$. There exists a random variable $p:\Omega\rightarrow\mathbb{N}\cup \{\infty\}$ with $p(\omega)\in\mathbb{N}\cup \{\infty\}$, $\lambda_{i}:\Omega\rightarrow (0,+\infty)$ and $A_{i}(\omega)\subset G$ with $A_{i}(\omega)\subset A_{i+1}(\omega)$ for any $1\leq i\leq p(\omega)$ such that $$R_{k}^{2}(\omega,y,\cdot)=\sum_{i=1}^{p(\omega)}\lambda_{i}(\omega){\bf 1}_{A_{i}(\omega)}(\cdot).$$
Note that $\sum_{i=1}^{p(\omega)}\lambda_{i}(\omega)|A_{i}(\omega)|=1$ and $(A_{i}(\omega)h\setminus A_{i}(\omega))\cap (A_{j}(\omega)\setminus A_{j}(\omega)h)=\emptyset$. It follows that
\begin{align*}
\left\|R_{k}^{2}(\vartheta^{m}\omega,x,\cdot)-R_{k}^{2}(\vartheta^{m}\omega,x,\cdot h)\right\|_{1}=\sum_{i=1}^{p(\vartheta^{m}\omega)}\lambda_{i}(\vartheta^{m}\omega)\left|A_{i}(\vartheta^{m}\omega)\triangle A_{i}(\vartheta^{m}\omega)h^{-1}\right|.
\end{align*}
For $\epsilon>0$, let $k>0$ satisfying
\begin{align*}
&\left\|\left(R_{k}\circ\tilde{\tau}_{v}(\vartheta^{m}\omega)-R_{k}(\vartheta^{m}\omega)\right)\cdot {\bf 1}_{f_{\omega}^{m}([v]_{\omega})\times G}\right\|_{\mathcal{H}_{1}(\vartheta^{m}\omega)}\\=&2\left\|R_{k}(\vartheta^{m}\omega,x,\cdot)-R_{k}(\vartheta^{m}\omega,x,\cdot h)\right\|_{2} \leq  |\mathcal{W}_{K}(\omega)|^{-1}\cdot\epsilon
\end{align*}
for any $v\in \mathcal{W}_{K}(\omega)$. Then
\begin{align*}
\frac{1}{2}\sum_{i=1}^{p(\vartheta^{m}\omega)}\lambda_{i}(\vartheta^{m}\omega)\sum_{h\in K}\left|A_{i}(\vartheta^{m}\omega)\triangle A_{i}(\vartheta^{m}\omega)h^{-1}\right|\leq \epsilon.
\end{align*}
It follows that there exists $1\leq i\leq p(\vartheta^{m}\omega)$ such that $$\sum_{h\in K}\left|A_{i}(\vartheta^{m}\omega)\triangle A_{i}(\vartheta^{m}\omega)h^{-1}\right|\leq 2\epsilon|A_{i}(\vartheta^{m}\omega)|.$$ Therefore, for any finite set $K$ and $\epsilon>0$, there exists a $(K,\epsilon)$-F{\o}lner set $A(\omega)$ which is finite and such that $$\sum_{h\in K}\left|A(\omega)h\triangle A(\omega)\right|\leq \epsilon|A(\omega)|.$$ This finishes the proof.
\end{proof}

\section{Amenability and random Perron-Frobenius operator}\label{Amenability and random Perron-Frobenius operator}

In this section, we mainly study the amenability and random Perron-Frobenius operator, and give the proof of Theorem \ref{the second result of main theorems}.

\begin{definition}
Define the linear operators $\mathcal{M}_{\omega}:\mathcal{H}_{\infty}(\omega)\rightarrow \mathcal{H}_{c}(\omega)$ and $T_{n}^{\omega}:\mathcal{H}_{c}(\omega)\rightarrow\mathcal{H}_{c}(\vartheta^{n}\omega)$ which for any $R(\omega)\in \mathcal{H}_{\infty}(\omega)$ and $n\in\mathbb{N}$ given by
\begin{align*}
\mathcal{M}_{\omega}(R(\omega)):=\sum_{g\in G}\left(\int R(\omega,\cdot,g)d\mu_{\omega}\right){\bf 1}_{\mathcal{E}_{\omega}\times \{g\}}
\end{align*}
and
\begin{align*}
T_{n}^{\omega}:=\mathcal{M}_{\vartheta^{n}\omega}\mathcal{L}_{\tilde{\varphi}}^{\omega,n}|_{\mathcal{H}_{c}(\omega)}.
\end{align*}
\end{definition}

\begin{remark}\label{the remark of linear operators and perron frobenius operators}
The above linear operators are positive and bounded with $\|\mathcal{M}_{\omega}\|_{\mathcal{H}_{\infty}(\omega)}= 1$ for $\mathbb{P}$-a.e $\omega\in \Omega$ and $$\|T_{n}^{\omega}\|_{\mathcal{H}_{\infty}(\vartheta^{n}\omega)}\leq \|\mathcal{M}_{\vartheta^{n}\omega}\|_{\mathcal{H}_{\infty}(\vartheta^{n}\omega)}\cdot\|\mathcal{L}_{\tilde{\varphi}}^{\omega,n}\|_{\mathcal{H}_{\infty}(\vartheta^{n}\omega)}=\|\mathcal{L}_{\tilde{\varphi}}^{\omega,n}\|_{\mathcal{H}_{\infty}(\vartheta^{n}\omega)}$$ for $\mathbb{P}$-a.e $\omega\in \Omega$ and any $n\in\mathbb{N}$.
\end{remark}

The following result gives the relations of operators $T_{n}^{\omega}$ and $\mathcal{L}_{\tilde{\varphi}}^{\omega}$. For a topological countable Markov shift, Jaerisch \cite{Jaerisch2015Groupextended} established these relations.

\begin{lemma}\label{the description of spectral of random group extension}
Let $\mathcal{T}$ be a topologically mixing random group extension of a random countable Markov shift $f$ with relative BIP property by a countable group $G$. Suppose that $\mathcal{L}_{\varphi}^{\omega}({\bf 1})={\bf 1}$, then we have the following results.
\begin{itemize}
  \item[(1)] $\|T_{n}^{\omega}\|_{\mathcal{H}_{\infty}(\vartheta^{n}\omega)}=\mathcal{A}_{n}(\omega)$ for any $n\in\mathbb{N}$.
  \item[(2)] $\|T_{n}^{\omega}\|_{\mathcal{H}_{\infty}(\vartheta^{n}\omega)}\leq \|\mathcal{L}_{\tilde{\varphi}}^{\omega,n}\|_{\mathcal{H}_{\infty}(\vartheta^{n}\omega)}\leq C_{\varphi}(\omega)\cdot\|T_{n}^{\omega}\|_{\mathcal{H}_{\infty}(\vartheta^{n}\omega)}$ for any $n\in\mathbb{N}$.
  \item[(3)] $\limsup\limits_{n\rightarrow\infty}\|T_{n}^{\omega}\|_{\mathcal{H}_{\infty}(\vartheta^{n}\omega)}^{\frac{1}{n}}=\limsup\limits_{n\rightarrow\infty}\left(\mathcal{A}_{n}(\omega)\right)^{\frac{1}{n}}={\rm spr}_{\mathcal{H}}(\mathcal{L}_{\tilde{\varphi}}^{\omega})$.
\end{itemize}
\end{lemma}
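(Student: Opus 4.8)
The plan is to treat all three items as Banach--lattice computations for the normalised transfer operator, exploiting the standing hypothesis $\mathcal{L}_{\varphi}^{\omega}({\bf 1})={\bf 1}$, which makes $\mu$ conformal in the sense $\mu_{\vartheta^{n}\omega}\circ\mathcal{L}_{\varphi}^{\omega,n}=\mu_{\omega}$ and provides a fiber Gibbs constant $C_{\varphi}(\omega)$ that is the \emph{same} for all iterates. The one structural observation that drives everything is that on $\mathcal{H}_{c}(\omega)$ the norms $\|\cdot\|_{\mathcal{H}_{\infty}(\omega)}$ and $\|\cdot\|_{\mathcal{H}_{1}(\omega)}$ coincide: an element of $\mathcal{H}_{c}(\omega)$ is the constant function with values $(R(\omega,\cdot,g))_{g\in G}$, and since $\mu_{\omega}$ is a probability measure both norms reduce to $\big(\sum_{g}|R(\omega,\cdot,g)|^{2}\big)^{1/2}$; thus $\mathcal{H}_{c}(\omega)$ is isometrically $l^{2}(G)$. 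Together with the elementary identity $\|\mathcal{M}_{\vartheta^{n}\omega}(w)\|_{\mathcal{H}_{1}(\vartheta^{n}\omega)}=\|w\|_{\mathcal{H}_{1}(\vartheta^{n}\omega)}$ valid for $w\ge 0$ (because $\big|\int w(\cdot,g)\,d\mu_{\vartheta^{n}\omega}\big|=\int|w(\cdot,g)|\,d\mu_{\vartheta^{n}\omega}$), this gives, for any nonnegative $R\in\mathcal{H}_{c}(\omega)$ (so that $\mathcal{L}_{\tilde{\varphi}}^{\omega,n}R\ge 0$),
\[
\frac{\|T_{n}^{\omega}R(\vartheta^{n}\omega)\|_{\mathcal{H}_{\infty}(\vartheta^{n}\omega)}}{\|R(\omega)\|_{\mathcal{H}_{\infty}(\omega)}}=\frac{\|T_{n}^{\omega}R(\vartheta^{n}\omega)\|_{\mathcal{H}_{1}(\vartheta^{n}\omega)}}{\|R(\omega)\|_{\mathcal{H}_{1}(\omega)}}=\frac{\|\mathcal{L}_{\tilde{\varphi}}^{\omega,n}R(\vartheta^{n}\omega)\|_{\mathcal{H}_{1}(\vartheta^{n}\omega)}}{\|R(\omega)\|_{\mathcal{H}_{1}(\omega)}}.
\]
Taking the supremum over such $R$, the right-hand side is $\mathcal{A}_{n}(\omega)$ by definition; and since $T_{n}^{\omega}$ is a positive operator on $l^{2}(G)$ one has $|T_{n}^{\omega}R|\le T_{n}^{\omega}|R|$ componentwise with $\||R|\|_{\mathcal{H}_{\infty}(\omega)}=\|R\|_{\mathcal{H}_{\infty}(\omega)}$, so the operator norm is already realised on nonnegative elements of $\mathcal{H}_{c}(\omega)$; this gives part (1).

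For part (2) the left inequality is precisely Remark \ref{the remark of linear operators and perron frobenius operators} (it uses $\|\mathcal{M}_{\vartheta^{n}\omega}\|_{\mathcal{H}_{\infty}(\vartheta^{n}\omega)}=1$). For the right inequality I would dominate an arbitrary $v\in\mathcal{H}_{\infty}(\omega)$ by the element $\hat{v}\in\mathcal{H}_{c}(\omega)$ with $\hat{v}(\omega,\cdot,g)\equiv\|v(\omega,\cdot,g)\|_{\infty}$, so that $\|\hat{v}\|_{\mathcal{H}_{\infty}(\omega)}=\|v\|_{\mathcal{H}_{\infty}(\omega)}$ and, the transfer weights being positive, $|\mathcal{L}_{\tilde{\varphi}}^{\omega,n}v|\le\mathcal{L}_{\tilde{\varphi}}^{\omega,n}\hat{v}$ pointwise. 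Expanding $\mathcal{L}_{\tilde{\varphi}}^{\omega,n}\hat{v}(\vartheta^{n}\omega,x,g)=\sum_{\alpha\in\mathcal{W}^{n}(\omega)}\exp\big(S_{n}\varphi\circ\tau_{\alpha}(\vartheta^{n}\omega,x)\big)\,\hat{v}(\omega,\cdot,g\cdot(\psi_{\omega}^{n}(\alpha))^{-1})$, the only $x$-dependence sits in the weights, which the fiber Gibbs property (with $\Lambda_{n}\equiv 1$ in the normalised case) bounds by $\exp(S_{n}\varphi\circ\tau_{\alpha}(\vartheta^{n}\omega,x))\le C_{\varphi}(\omega)\,\mu_{\omega}([\alpha]_{\omega})$, uniformly in $x\in f_{\omega}^{n}([\alpha]_{\omega})$ and in $n$. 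Conformality of $\mu$ gives $\int\mathcal{L}_{\varphi}^{\omega,n}{\bf 1}_{[\alpha]_{\omega}}\,d\mu_{\vartheta^{n}\omega}=\mu_{\omega}([\alpha]_{\omega})$, hence $T_{n}^{\omega}\hat{v}(\vartheta^{n}\omega,\cdot,g)=\sum_{\alpha}\mu_{\omega}([\alpha]_{\omega})\,\hat{v}(\omega,\cdot,g\cdot(\psi_{\omega}^{n}(\alpha))^{-1})$, so that $\mathcal{L}_{\tilde{\varphi}}^{\omega,n}\hat{v}(\vartheta^{n}\omega,x,g)\le C_{\varphi}(\omega)\,T_{n}^{\omega}\hat{v}(\vartheta^{n}\omega,\cdot,g)$ for every $x$. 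Taking $\sup_{x}$, then the $l^{2}$-sum over $g$, then the supremum over $v$, yields $\|\mathcal{L}_{\tilde{\varphi}}^{\omega,n}\|_{\mathcal{H}_{\infty}(\vartheta^{n}\omega)}\le C_{\varphi}(\omega)\,\|T_{n}^{\omega}\|_{\mathcal{H}_{\infty}(\vartheta^{n}\omega)}$.

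Part (3) is then immediate: taking $n$-th roots in part (2) gives $\|T_{n}^{\omega}\|_{\mathcal{H}_{\infty}(\vartheta^{n}\omega)}^{1/n}\le\|\mathcal{L}_{\tilde{\varphi}}^{\omega,n}\|_{\mathcal{H}_{\infty}(\vartheta^{n}\omega)}^{1/n}\le C_{\varphi}(\omega)^{1/n}\|T_{n}^{\omega}\|_{\mathcal{H}_{\infty}(\vartheta^{n}\omega)}^{1/n}$, and $C_{\varphi}(\omega)^{1/n}\to 1$ for $\mathbb{P}$-a.e.\ $\omega$ since $C_{\varphi}(\omega)<\infty$; combined with part (1) and the defining formula ${\rm spr}_{\mathcal{H}}(\mathcal{L}_{\tilde{\varphi}}^{\omega})=\limsup_{n}\|\mathcal{L}_{\tilde{\varphi}}^{\omega,n}\|_{\mathcal{H}_{\infty}(\vartheta^{n}\omega)}^{1/n}$, all three $\limsup$'s coincide.

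The only genuinely nontrivial step is the right inequality in part (2): the whole argument hinges on the $x$-uniformity and, above all, the $n$-uniformity of the fiber Gibbs constant $C_{\varphi}(\omega)$, so I would invoke the version of the fiber Gibbs property for $\mu_{\omega}$ quoted earlier (with the same random variable $C_{\varphi}$ for every iterate, $\int|\log C_{\varphi}|\,d\mathbb{P}<\infty$) rather than re-derive distortion estimates; the remaining ingredients --- the isometry $\mathcal{H}_{c}(\omega)\cong l^{2}(G)$, positivity of $T_{n}^{\omega}$ and $\mathcal{L}_{\tilde{\varphi}}^{\omega,n}$, and the conformality identity --- are routine bookkeeping.
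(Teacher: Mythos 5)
Your proof is correct and follows essentially the same route as the paper's: part (1) via the identity $\|T_{n}^{\omega}R\|_{\mathcal{H}_{\infty}}=\|\mathcal{L}_{\tilde{\varphi}}^{\omega,n}R\|_{\mathcal{H}_{1}}$ on nonnegative elements of $\mathcal{H}_{c}(\omega)$, part (2) by dominating $v$ with the constant majorant $\hat{v}$ and invoking the fiber Gibbs bound together with the conformality identity $\int\mathcal{L}_{\varphi}^{\omega,n}{\bf 1}_{[\alpha]_{\omega}}\,d\mu_{\vartheta^{n}\omega}=\mu_{\omega}([\alpha]_{\omega})$, and part (3) by taking $n$-th roots. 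You merely spell out some details the paper leaves implicit (the isometry $\mathcal{H}_{c}(\omega)\cong l^{2}(G)$, realisation of the norm on nonnegative elements, and $C_{\varphi}(\omega)^{1/n}\to 1$).
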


\begin{proof}
In the following, we give the proofs respectively.

(1). Let $n\in\mathbb{N}$. For any $R(\omega)\in\mathcal{H}_{c}^{+}(\omega):=\left\{R(\omega)\in\mathcal{H}_{c}(\omega):R(\omega)\geq 0\right\}$, we have
$$\|T_{n}^{\omega}R(\vartheta^{n}\omega)\|_{\mathcal{H}_{\infty}(\vartheta^{n}\omega)}=\|\mathcal{L}_{\tilde{\varphi}}^{\omega,n}R(\vartheta^{n}\omega)\|_{\mathcal{H}_{1}(\vartheta^{n}\omega)},$$
and by the Remark \ref{the remark of linear operators and perron frobenius operators}, this implies (1).

(2). By the Remark \ref{the remark of linear operators and perron frobenius operators}, we have $\|T_{n}^{\omega}\|_{\mathcal{H}_{\infty}(\vartheta^{n}\omega)}\leq \|\mathcal{L}_{\tilde{\varphi}}^{\omega,n}\|_{\mathcal{H}_{\infty}(\vartheta^{n}\omega)}$. Then we need to prove that
\begin{align*}
\|\mathcal{L}_{\tilde{\varphi}}^{\omega,n}\|_{\mathcal{H}_{\infty}(\vartheta^{n}\omega)}\leq C_{\varphi}(\omega)\cdot\|T_{n}^{\omega}\|_{\mathcal{H}_{\infty}(\vartheta^{n}\omega)}.
\end{align*}
Firstly, we claim that
\begin{align*}
\|\mathcal{L}_{\tilde{\varphi}}^{\omega,n}\|_{\mathcal{H}_{\infty}(\vartheta^{n}\omega)}=\sup_{R\in\mathcal{H}_{c}^{+}(\omega),\ \|R(\omega)\|_{\mathcal{H}_{\infty}(\omega)}=1}\left\{\|\mathcal{L}_{\tilde{\varphi}}^{\omega,n}R(\vartheta^{n}\omega)\|_{\mathcal{H}_{\infty}(\vartheta^{n}\omega)}\right\}.
\end{align*}
Indeed, for any $R(\omega)\in \mathcal{H}_{\infty}^{+}(\omega):=\left\{R(\omega)\in\mathcal{H}_{\infty}(\omega):R(\omega)\geq 0\right\}$, there exists $\hat{R}(\omega)\in \mathcal{H}_{c}^{+}(\omega),\omega\in\Omega$ where $\hat{R}(\omega,x,g):=\|R(\omega,\cdot,g)\|_{\mathcal{H}_{\infty}(\omega)}$ for $(x,g)\in \mathcal{E}_{\omega}\times G$ such that $\|\hat{R}(\omega)\|_{\mathcal{H}_{\infty}(\omega)}=\|R(\omega)\|_{\mathcal{H}_{\infty}(\omega)}$ and $$\|\mathcal{L}_{\tilde{\varphi}}^{\omega,n}R(\vartheta^{n}\omega)\|_{\mathcal{H}_{\infty}(\vartheta^{n}\omega)}\leq\|\mathcal{L}_{\tilde{\varphi}}^{\omega,n}\hat{R}(\vartheta^{n}\omega)\|_{\mathcal{H}_{\infty}(\vartheta^{n}\omega)}.$$ This finishes the claim.
For any $R(\omega)\in \mathcal{H}_{\infty}^{+}(\omega)$ and $y\in\mathcal{E}_{\omega}$, we have
\begin{align}\label{the equation 1 description of spectral of random group extension}
\|\mathcal{L}_{\tilde{\varphi}}^{\omega,n}R(\vartheta^{n}\omega,\cdot,g)\|_{\mathcal{H}_{\infty}(\vartheta^{n}\omega)} \leq C_{\varphi}(\omega)\cdot\sum_{\alpha\in\mathcal{W}^{n}(\omega)}\mu_{\omega}([\alpha]_{\omega})\cdot R(\omega,y,g\cdot(\psi_{\omega}^{n}(\alpha))^{-1}).
\end{align}
Since $\mu_{\omega}([\alpha])=\int \mathcal{L}_{\varphi}^{\omega,n}{\bf 1}_{\{\omega\}\times[\alpha]_{\omega}}(\vartheta^{n}\omega,\cdot)d\mu_{\vartheta^{n}\omega}$ for any $\alpha\in \mathcal{W}^{1}(\omega)$, then
\begin{align}\label{the equation 2 description of spectral of random group extension}
\sum_{\alpha\in \mathcal{W}^{n}(\omega)}\mu_{\omega}([\alpha]_{\omega})\cdot R(\omega,y,g\cdot(\psi_{\omega}^{n}(\alpha))^{-1})=\|\mathcal{L}_{\tilde{\varphi}}^{\omega,n}R(\vartheta^{n}\omega,\cdot,g)\|_{1}.
\end{align}
Then this implies (2) combining with \eqref{the equation 1 description of spectral of random group extension} and \eqref{the equation 2 description of spectral of random group extension}.

(3). By using (1) and (2), this implies (3) since the spectral radius of $\mathcal{L}_{\tilde{\varphi}}^{\omega}$ satisfies that $${\rm spr}_{\mathcal{H}}(\mathcal{L}_{\tilde{\varphi}}^{\omega})=\limsup\limits_{n\rightarrow\infty}\|\mathcal{L}_{\tilde{\varphi}}^{\omega,n}\|_{\mathcal{H}_{\infty}(\vartheta^{n}\omega)}^{\frac{1}{n}}.$$

\end{proof}

In the following, we prove the Theorem \ref{the second result of main theorems}.

\begin{proof}[Proof of Theorem \ref{the second result of main theorems}]
Given $\omega\in \Omega$. Assume that without loss of generality that $\mathcal{L}_{\varphi}^{\omega}({\bf 1})={\bf 1}$
and then $\Pi_{Gur}^{(r)}(f,\varphi)=0$. If not, there exists locally fiber H\"{o}lder continuous $h:\mathcal{E}\rightarrow \mathbb{R}^{+}$ which $h(\omega,\cdot),\omega\in \Omega$ are bounded away from zero and infinity such that
\begin{align*}
\mathcal{L}_{\varphi}^{\omega}(h(\omega,\cdot))=\lambda_{\omega}\cdot h(\vartheta\omega,\cdot)
\end{align*}
where $\int\log\lambda_{\omega}d\mathbb{P}(\omega)=\Pi_{Gur}^{(r)}(f,\varphi)$.
Define $\varphi_{0}(\omega,\cdot)=\varphi(\omega,\cdot)+\log h(\omega,\cdot)-\log h\circ\Theta(\omega,\cdot)-\log\lambda_{\omega}$. Then we have
$\mathcal{L}_{\varphi_{0}}^{\omega}({\bf 1})={\bf 1}$
and then $\Pi_{Gur}^{(r)}(f,\varphi_{0})=0$. Next, we divide the proof into two steps.

{\bf Step 1.} For any $v\in \mathcal{H}_{\infty}(\omega)$, we have
\begin{align*}
\mathcal{L}_{\tilde{\varphi}_{0}}^{\omega}v=\exp(-\log \lambda_{\omega})\cdot\frac{1}{h\circ\pi_{1,2}}\cdot\mathcal{L}_{\tilde{\varphi}_{0}}^{\omega}v\cdot h\circ\pi_{1,2}.
\end{align*}
Therefore, we obtain that $\mathcal{L}_{\tilde{\varphi}_{0}}^{\omega}$ and $\exp(-\log \lambda_{\omega})\cdot\mathcal{L}_{\tilde{\varphi}}^{\omega}$ have the same spectrum since $h$ is bounded. Hence,
$$\log {\rm spr}_{\mathcal{H}}(\mathcal{L}_{\tilde{\varphi}_{0}}^{\omega})=\log {\rm spr}_{\mathcal{H}}(\mathcal{L}_{\tilde{\varphi}}^{\omega})-\Pi_{Gur}^{(r)}(f,\varphi).$$ By the condition of the theorem \ref{the second result of main theorems}, we have
$\log {\rm spr}_{\mathcal{H}}(\mathcal{L}_{\tilde{\varphi}_{0}}^{\omega})=0$. Then we may assume that $\mathcal{L}_{\varphi}^{\omega}({\bf 1})={\bf 1}$.
By $\log {\rm spr}_{\mathcal{H}}(\mathcal{L}_{\tilde{\varphi}}^{\omega})=\Pi_{Gur}^{(r)}(f,\varphi)=0$ since $\mathcal{L}_{\varphi}^{\omega}({\bf 1})={\bf 1}$, then we have $${\rm spr}_{\mathcal{H}}(\mathcal{L}_{\tilde{\varphi}}^{\omega})= 1.$$ Therefore, by (3) in Lemma \ref{the description of spectral of random group extension}, we have
$$\limsup\limits_{n\rightarrow\infty}\left(\mathcal{A}_{n}(\omega)\right)^{\frac{1}{n}}=1.$$
It follows that $G$ is amenable by the processes of Proposition \ref{the estimation 1 of random operators}, Lemma \ref{the estimation 2 of random operators with functions} and Theorem \ref{the part 2 of the first theorem} under the condition of $\limsup\limits_{n\rightarrow\infty}\left(\mathcal{A}_{n}(\omega)\right)^{\frac{1}{n}}=1$.

{\bf Step 2.}  Suppose that $G$ is amenable. Observe that for any $n\in\mathbb{N}$, $R(\omega)\in \mathcal{H}_{c}(\omega)$ and $y\in\mathcal{E}_{\omega}$, one has
\begin{align*}
T_{n}^{\omega}R(\vartheta^{n}\omega)=\sum_{g\in G}\left(\sum_{\alpha\in \mathcal{W}^{n}(\omega)}\mu_{\omega}([\alpha]_{\omega})\cdot R(\omega,y,g\cdot(\psi_{\omega}^{n}(\alpha))^{-1})\right){\bf 1}_{\mathcal{E}_{\vartheta^{n}\omega}\times \{g\}},
\end{align*}
which is the Markov operator with respect to the distribution given by $$\mu_{\omega}\left(\left\{x\in\mathcal{E}_{\omega}:\psi_{\omega}^{n}(x_{0},x_{1},\ldots,x_{n-1})=g\right\}\right)$$ on $G$. Then we have $\|T_{n}^{\omega}\|_{\mathcal{H}_{\infty}(\vartheta^{n}\omega)}=1$ for $\mathbb{P}$-a.e. $\omega\in \Omega$ by \cite[Theorem 1]{Day1964Convolutions}. By using (3) in Lemma \ref{the description of spectral of random group extension}, we obtain that ${\rm spr}_{\mathcal{H}}(\mathcal{L}_{\tilde{\varphi}}^{\omega})=1$ for $\mathbb{P}$-a.e. $\omega\in\Omega$, which implies the proof.

\end{proof}

\section{Relative Gurevi\v{c} pressure for amenable group extensions}\label{Relative Gurevic pressure for amenable group extensions}

In this section, we consider the relative Gurevi\v{c} pressure for the case of amenable group $G$ and give the proof of Theorem \ref{the third result of main theorems}.
For each measurable in $(\omega,x)$ and continuous in $x\in \mathcal{E}_{\omega}$ function $\varphi$ on $\mathcal{E}$, let us set
\begin{align*}
\|\varphi\|_{1}=\int\|\varphi(\omega)\|_{\infty}d\mathbb{P}(\omega), \ \text{where} \ \|\varphi(\omega)\|_{\infty}=\sup_{x\in \mathcal{E}_{\omega}}|\varphi(\omega,x)|.
\end{align*}
Let $L^{1}_{\mathcal{E}}(\Omega,C(X))$ the space of such functions $\varphi$ with $\|\varphi\|_{1}<\infty$. For any subset $\Omega'\subset\Omega$, define $J_{\omega}(\Omega'):=\left\{n\in\mathbb{N}:\vartheta^{n}\omega\in\Omega'\right\}$. For $t>0$, $b_{n}\geq 0$, $a\in\mathcal{W}^{1}(\omega)$ and
given a measurable family $o=\{o(\omega)\in\mathcal{E}_{\omega}:o(\omega)\in[a]_{\omega}\}_{\omega\in \Omega}$, we let
\begin{align*}
\mathcal{P}_{a}(\omega,t):=\sum_{n\in J_{\omega}(\Omega_{a})}t^{-n}\cdot b_{n}\cdot \exp\left(\log \mathcal{Z}_{n}^{\omega}(\tilde{\varphi},a,a)\right)
\end{align*}
where for any $n\in\mathbb{N}$, $\mathcal{Z}_{n}^{\omega}(\tilde{\varphi},a,a)$ is defined by
\begin{align*}
\mathcal{Z}_{n}^{\omega}(\tilde{\varphi},a,a)=\sum_{\alpha\in\mathcal{W}^{n}_{a,a}(\omega)}\exp\left(S_{n}\varphi\circ\tau_{\alpha}(\vartheta^{n}\omega,o(\vartheta^{n}\omega))\cdot {\bf 1}_{\mathcal{E}\times \{id\}}\circ\Theta_{\psi}^{n}\left(\omega,y,id\right)\right).
\end{align*}
Observe that
\begin{align*}
\mathcal{P}_{a}(\omega,t)=\sum_{n\in \mathbb{N}}{\bf 1}_{\Omega_{a}}\circ \vartheta^{n}(\omega)\cdot t^{-n}\cdot b_{n}\cdot \exp\left(\log \mathcal{Z}_{n}^{\omega}(\tilde{\varphi},a,a)\right),
\end{align*}
and since the function ${\bf 1}_{\Omega_{a}}\circ \vartheta^{n}(\omega)\cdot t^{-n}\cdot b_{n}\cdot \exp\left(\log \mathcal{Z}_{n}^{\omega}(\tilde{\varphi},a,a)\right)$ is measurable in $\Omega$, we have that the function $\mathcal{P}_{a}(\omega,t)$ is measurable in $\Omega$. Then we define
\begin{align*}
\mathcal{P}_{a}(t):=\int_{\Omega} \mathcal{P}_{a}(\omega,t)d\mathbb{P}(\omega).
\end{align*}
If the sequence $\{b_{n}\}_{n\in\mathbb{N}}$ is omitted, then for a sequence $\left\{\log \mathcal{Z}_{n}^{\omega}(\tilde{\varphi},a,a)\right\}_{n\in\mathbb{N},n\in J_{\omega}(\Omega_{a})}$, by [\cite{Stadlbauer2010On}, Proposition 3.1], the number
\begin{align*}
\Pi_{Gur}^{(r)}(\mathcal{T},\tilde{\varphi})=\int_{\Omega}\lim_{n\rightarrow\infty,n\in J_{\omega}(\Omega_{a})}\frac{1}{n}\log \mathcal{Z}_{n}^{\omega}(\tilde{\varphi},a,a)d\mathbb{P}(\omega)
\end{align*}
is called the transition parameter of $\left\{\log \tilde{Z}_{n}^{\omega}(\tilde{\varphi},a,a)\right\}_{n\in\mathbb{N},n\in J_{\omega}(\Omega_{a})}$. Since $\mathbb{P}$ is ergodic, the above limit holds for $\mathbb{P}$-a.e. $\omega\in \Omega$ without taking integrals in the right-hand side. It is uniquely determined that by the fact that $\mathcal{P}_{a}(\omega,t)$ converge for $t>\exp\left(\Pi_{Gur}^{(r)}(\mathcal{T},\tilde{\varphi})\right)$ and diverges for $t<\exp\left(\Pi_{Gur}^{(r)}(\mathcal{T},\tilde{\varphi})\right)$ for $\mathbb{P}$-a.e. $\omega\in \Omega$. For $t=\exp\left(\Pi_{Gur}^{(r)}(\mathcal{T},\tilde{\varphi})\right)$, the sum may converge or diverge. By \cite[Lemma 3.1]{Denker1991On}, we have that there exists a sequence $\{b_{n}\}_{n\in\mathbb{N}}$ of positive reals such that $$\lim_{n\rightarrow\infty}\frac{b_{n}}{b_{n+1}}=1,$$
(also taking subsequence $\left\{n\right\}_{n\in\mathbb{N},n\in J_{\omega}(\Omega_{a})}$ if necessary), and in addition, for $\mathbb{P}$-a.e. $\omega\in \Omega$, $\mathcal{P}_{a}(\omega,t)$ has radius of convergence $\exp\left(\Pi_{Gur}^{(r)}(\mathcal{T},\tilde{\varphi})\right)$ and diverges at $\exp\left(\Pi_{Gur}^{(r)}(\mathcal{T},\tilde{\varphi})\right)$. Similarly, we can obtain that $\mathcal{P}_{a}(t)$ has radius of convergence $\exp\left(\Pi_{Gur}^{(r)}(\mathcal{T},\tilde{\varphi})\right)$ and diverges at $\exp\left(\Pi_{Gur}^{(r)}(\mathcal{T},\tilde{\varphi})\right)$. Write for $\rho=\exp\left(\Pi_{Gur}^{(r)}(\mathcal{T},\tilde{\varphi})\right)$.
For $t>\rho$, $c\in\mathcal{W}^{1}(\omega)$ and $g\in G$, given a measurable family $\eta=\{\eta(\omega)\in \mathcal{E}_{\omega}:\eta(\omega)\in [b]_{\omega}\}_{\omega\in \Omega}$, denote a fiber measure $\nu_{\omega,\eta,g}^{t}$ on $\{\omega\}\times\mathcal{E}_{\omega}\times G$ by
\begin{align*}
\int_{\{\omega\}\times\mathcal{E}_{\omega}\times G}R(\omega)d\nu_{\omega,\eta,g}^{t}:=\frac{1}{\mathcal{P}_{a}(t)}\sum_{n\in \mathbb{N}}t^{-n}\cdot b_{n}\cdot D_{n}^{\omega}(\varphi,R,\eta,g),
\end{align*}
for any $R\in L^{1}_{\mathcal{E}}(\Omega,C(X))$ where $D_{n}^{\omega}(\varphi,R,\eta,g)$ is defined as
\begin{align*}
\sum_{\alpha\in\mathcal{W}^{n}(\omega),\alpha\eta(\vartheta^{n}\omega) \ \omega\text{\rm -admissible}}\exp\left(S_{n}\varphi(\omega,\alpha\eta(\vartheta^{n}\omega))R\circ\tilde{\tau}_{\alpha}(\vartheta^{n}\omega,\eta(\vartheta^{n}\omega),g)\right).
\end{align*}
Since $D_{n}^{\omega}(\varphi,R,\eta,g)$ is measurable in $\Omega$, the function $\int_{\{\omega\}\times\mathcal{E}_{\omega}\times G}R(\omega)d\nu_{\omega,\eta,g}^{t}$ is measurable in $\Omega$.
Let $t\rightarrow \rho^{+}$, there exists a fiber measure $\nu_{\omega,\eta,g}$ on $\{\omega\}\times\mathcal{E}_{\omega}\times G$ since $\{\omega\}\times\mathcal{E}_{\omega}\times \{g\}$ is compact and $G$ is countable.

In order to prove Theorem \ref{the third result of main theorems}, we need the following lemmas.

\begin{lemma}\label{the lemma 1 of main result}
Let $\mathcal{T}$ be a topologically mixing random group extension of a random shift of finite type $f$ by a countable group $G$. The following three statements hold.
\begin{itemize}
  \item[(1)] There exists an non-random variable $C_{1}>0$ such that for any measurable families $\eta,\xi$ and elements $g,h\in G$ satisfying $f_{\omega}^{k}(\eta(\omega))=\xi(\vartheta^{k}\omega)$ and $g\cdot \psi_{\omega}^{k}(\eta(\omega))=h$, then for $\mathbb{P}$-a.e. $\omega\in \Omega$ we have
      $$\int_{\{\omega\}\times\mathcal{E}_{\omega}\times G}R(\omega)d\nu_{\omega,\eta,g}\leq C_{1}^{k}\cdot\int_{\{\omega\}\times\mathcal{E}_{\omega}\times G}R(\omega)d\nu_{\omega,\xi,h}.$$
  \item[(2)] There exists a random variable $C_{2}:\Omega\rightarrow (0,\infty)$ such that for  any measurable families $\eta,\xi$ and elements $g\in G$ where $\eta(\omega),\xi(\omega)$ belongs to the same cylinder of length $1$, then for $\mathbb{P}$-a.e. $\omega\in \Omega$ we have
      $$\int_{\{\omega\}\times\mathcal{E}_{\omega}\times G}R(\omega)d\nu_{\omega,\eta,g}\leq C_{2}(\omega)\cdot\int_{\{\omega\}\times\mathcal{E}_{\omega}\times G}R(\omega)d\nu_{\omega,\xi,g}.$$
  \item[(3)] There exists a random variable $C_{3}:\Omega\rightarrow (0,\infty)$ such that for any measurable families $\eta$ and $\xi$, then for $\mathbb{P}$-a.e. $\omega\in \Omega$ we have
      $$\int_{\{\omega\}\times\mathcal{E}_{\omega}\times G}R(\omega)d\nu_{\omega,\xi,g}\leq C_{3}(\omega)\cdot\int_{\{\omega\}\times\mathcal{E}_{\omega}\times G}R(\omega)d\nu_{\omega,\eta,g}.$$
\end{itemize}
\end{lemma}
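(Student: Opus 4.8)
All three statements are uniform distortion estimates for the family $\{\nu_{\omega,\eta,g}\}$, and the plan is to prove, for $\mathbb{P}$-a.e.\ $\omega$, the corresponding inequalities for the approximating measures $\nu_{\omega,\eta,g}^{t}$ ($t>\rho$) with constants independent of $t$, and then let $t\downarrow\rho$. Two ingredients will be used throughout. First, since $f$ has the relative BIP property and $\Pi_{Gur}^{(r)}(f,\varphi)<\infty$ (automatic for a random shift of finite type), the fibre Gibbs property recalled above gives a random variable $C_{\varphi}(\omega)$ with $\int|\log C_{\varphi}|\,d\mathbb{P}<\infty$ and $C_{\varphi}(\omega)^{-1}\Lambda_{n}(\omega)\mu_{\omega}([\alpha]_{\omega})\le\exp(S_{n}\varphi(\omega,x))\le C_{\varphi}(\omega)\Lambda_{n}(\omega)\mu_{\omega}([\alpha]_{\omega})$ for every $n$, every $\alpha\in\mathcal{W}^{n}(\omega)$ and every $x\in[\alpha]_{\omega}$; the point is that this bound is uniform in $n$ and $\alpha$, whereas the naive estimate $\sum_{j<n}V_{n-j}^{\vartheta^{j}\omega}(\varphi)$ is not, since $\kappa$ is merely integrable. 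Second, $\mathcal{P}_{a}(t)\to\infty$ as $t\downarrow\rho$, so in each $\nu_{\omega,\eta,g}^{t}$ the contribution of the finitely many short words is washed out in the limit and only the asymptotics of the Poincar\'e series matters.

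\textbf{Part (1).} The relations $f_{\omega}^{k}(\eta(\omega))=\xi(\vartheta^{k}\omega)$ and $\psi_{\omega}^{k}(\eta(\omega))=g^{-1}h$ hold for $\mathbb{P}$-a.e.\ $\omega$, hence by $\vartheta$-invariance of $\mathbb{P}$ at every level: writing $w^{(n)}$ for the first $k$ symbols of $\eta(\vartheta^{n}\omega)$ one has $\eta(\vartheta^{n}\omega)=w^{(n)}\xi(\vartheta^{n+k}\omega)$ and $\psi_{\vartheta^{n}\omega}^{k}(w^{(n)})=g^{-1}h$. I would then note that $\alpha\mapsto\alpha w^{(n)}$ injects the index set of $D_{n}^{\omega}(\varphi,R,\eta,g)$ into that of $D_{n+k}^{\omega}(\varphi,R,\xi,h)$, that the matched terms land at the same point $\alpha\eta(\vartheta^{n}\omega)=(\alpha w^{(n)})\xi(\vartheta^{n+k}\omega)$ with the same $G$-coordinate $g\,\psi_{\omega}^{n}(\alpha)^{-1}$ (it is precisely the cocycle relation $h=g\,\psi_{\vartheta^{n}\omega}^{k}(w^{(n)})$ that forces this agreement), and that the two Birkhoff weights differ only by $\exp(S_{k}\varphi)$ over an orbit arc of length $k$. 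Summing over $\alpha$ and $n$ with $R\ge0$ and accounting for the index shift $n\mapsto n+k$ (a factor $t^{k}b_{n}/b_{n+k}$) gives $\int R\,d\nu_{\omega,\eta,g}^{t}\le t^{k}(\sup_{n}b_{n}/b_{n+k})\big(\sup_{n}e^{-S_{k}\varphi(\vartheta^{n}\omega,\eta(\vartheta^{n}\omega))}\big)\int R\,d\nu_{\omega,\xi,h}^{t}$, and letting $t\downarrow\rho$ yields the claim, $C_{1}$ collecting $\rho$, the asymptotic ratio of the $b_{n}$ and an exponential bound on the weight of $\varphi$ over arcs of bounded length.

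\textbf{Part (2).} Since $\eta(\omega)$ and $\xi(\omega)$ lie in a common $1$-cylinder for a.e.\ $\omega$, so do $\eta(\vartheta^{n}\omega)$ and $\xi(\vartheta^{n}\omega)$ for every $n$; hence the admissible preimage words $\alpha$ in $D_{n}^{\omega}(\varphi,R,\eta,g)$ and in $D_{n}^{\omega}(\varphi,R,\xi,g)$ coincide, the $G$-coordinate $g\,\psi_{\omega}^{n}(\alpha)^{-1}$ is the same, and $\alpha\eta(\vartheta^{n}\omega),\alpha\xi(\vartheta^{n}\omega)\in[\alpha]_{\omega}$. Testing on a generating $\pi$-system of cylinder sets $[\beta]_{\omega}\times\{g'\}$: for $n\ge|\beta|$ the index sets match and, by the Gibbs property, $\exp(S_{n}\varphi(\omega,\alpha\eta(\vartheta^{n}\omega)))\le C_{\varphi}(\omega)^{2}\exp(S_{n}\varphi(\omega,\alpha\xi(\vartheta^{n}\omega)))$ uniformly in $n,\alpha$; the terms with $n<|\beta|$ are bounded and disappear after division by $\mathcal{P}_{a}(t)\to\infty$. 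Hence $\nu_{\omega,\eta,g}([\beta]_{\omega}\times\{g'\})\le C_{\varphi}(\omega)^{2}\nu_{\omega,\xi,g}([\beta]_{\omega}\times\{g'\})$, so $\int R\,d\nu_{\omega,\eta,g}\le C_{2}(\omega)\int R\,d\nu_{\omega,\xi,g}$ for all $R\ge0$ with $C_{2}(\omega)=C_{\varphi}(\omega)^{2}$.

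\textbf{Part (3) and main obstacle.} Here I would use topological mixing of the group extension $\mathcal{T}$, not merely of $f$. For a.e.\ $\omega$ let $M(\omega)$ be the least $m\ge1$ with $id\in G_{a,b}^{m}(\omega)$, $a=\xi(\omega)_{0}$, $b=\eta(\vartheta^{m}\omega)_{0}$; the mixing criterion for $\mathcal{T}$ (with group element $id$) makes $M(\omega)$ finite and measurable, and one may pick measurably $\alpha_{\omega}\in\mathcal{W}^{M(\omega)}(\omega)$ with first symbol $a$, with $\eta(\vartheta^{M(\omega)}\omega)\in f_{\omega}^{M(\omega)}([\alpha_{\omega}]_{\omega})$, and with $\psi_{\omega}^{M(\omega)}(\alpha_{\omega})=id$. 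Then $\xi'(\omega):=\alpha_{\omega}\eta(\vartheta^{M(\omega)}\omega)$ lies in the same $1$-cylinder as $\xi(\omega)$, satisfies $f_{\omega}^{M(\omega)}\xi'(\omega)=\eta(\vartheta^{M(\omega)}\omega)$ and $g\,\psi_{\omega}^{M(\omega)}(\xi'(\omega))=g$; applying (2) to the pair $(\xi,\xi')$ and then (1) to $(\xi',\eta)$ gives the claim with $C_{3}(\omega)=C_{2}(\omega)C_{1}^{M(\omega)}$. I expect the real difficulties to be: (a) performing the word selections in (1) and (3) measurably in $\omega$ while keeping all base points and cocycles consistent under $\vartheta$; and (b) the uniform-in-$n$ control of distortion in (2)--(3), where integrability of $\kappa$ is insufficient and the fibre Gibbs property of $\mu_{\omega}$ must be invoked, with the normalisation $\mathcal{P}_{a}(t)\to\infty$ then handling the short-word remainder.
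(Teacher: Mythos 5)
Your proposal is correct and follows the same overall route as the paper's proof: everything is established at the level of the approximating measures $\nu^{t}_{\omega,\eta,g}$ with constants independent of $t>\rho$, part (1) by prepending to each word counted in $D_{n}^{\omega}(\varphi,R,\eta,g)$ the first $k$ symbols of $\eta(\vartheta^{n}\omega)$ and using the cocycle identity $g\cdot\psi^{k}_{\vartheta^{n}\omega}(\eta(\vartheta^{n}\omega))=h$ to match the $G$-coordinates (the paper's constant is $C_{1}=\bigl(\inf_{n}\tfrac{b_{n}}{b_{n-1}}\cdot B_{\varphi}\cdot\rho^{-1}\bigr)^{-1}$, the same three ingredients you list), part (2) by a term-by-term distortion comparison over the common index set with the short-word remainder killed by $\mathcal{P}_{a}(t)\to\infty$, and part (3) by interpolating a third family via topological mixing of $\mathcal{T}$ and chaining (1) and (2). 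The one genuinely different step is in (2): the paper takes the distortion constant directly from local fiber H\"older continuity, asserting $C_{2}(\omega)=\exp(2\kappa(\omega))$, but the telescoped variation $\sum_{j=0}^{n-1}V^{\vartheta^{j}\omega}_{n+1-j}(\varphi)\le\sum_{j=0}^{n-1}\kappa(\vartheta^{j}\omega)2^{-(n+1-j)}$ involves $\kappa$ along the forward orbit and is naturally a function of $\vartheta^{n}\omega$ rather than of $\omega$, so it is not obviously uniform in $n$; your substitute, deducing $\exp(S_{n}\varphi(\omega,x))\le C_{\varphi}(\omega)^{2}\exp(S_{n}\varphi(\omega,y))$ for $x,y$ in a common length-$n$ cylinder from the fiber Gibbs property of $\mu_{\omega}$ (available since a random shift of finite type has the relative BIP property and finite pressure), is uniform in $n$ and $\alpha$ and is the more robust way to produce $C_{2}(\omega)$. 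In (3) your intermediate family $\xi'(\omega)=\alpha_{\omega}\eta(\vartheta^{M(\omega)}\omega)$ maps exactly onto $\eta$ after $M(\omega)$ steps, so one application of (2) suffices ($C_{3}=C_{2}C_{1}^{M}$), whereas the paper's $\zeta$ only lands in the same $1$-cylinder as $\eta$ and costs a second application ($C_{3}=C_{2}^{2}C_{1}^{N}$); in either version one should, as the paper does, take the mixing time bounded by a fixed $r$ so that statement (1), which is formulated for a fixed $k$, is legitimately applicable.
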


\begin{proof}

(1). For any measurable families $\eta,\xi$ and elements $g,h\in G$ satisfying $f_{\omega}^{k}(\eta(\omega))=\xi(\vartheta^{k}\omega)$ and $g\cdot \psi_{\omega}^{k}(\eta(\omega))=h$, we have
\begin{align*}
\int_{\{\omega\}\times\mathcal{E}_{\omega}\times G}R(\omega)d\nu_{\omega,\xi,h}^{t}&=\frac{1}{\mathcal{P}_{a}(t)}\sum_{n\in \mathbb{N}}t^{-n}\cdot b_{n}\cdot D_{n}^{\omega}(\varphi,R,\xi,h)
\\
&\geq\frac{1}{\mathcal{P}_{a}(t)}\sum_{n\geq k}t^{-(n-k)} t^{-k} \frac{b_{n}}{b_{n-k}} b_{n-k}\cdot \left(B_{\varphi}\right)^{k}\cdot D_{n-k}^{\omega}(\varphi,R,\eta,g),
\end{align*}
where $B_{\varphi}=\inf_{x\in\mathcal{E}_{\omega},\omega\in\Omega}\exp(\varphi(\omega,x))$. Since
\begin{align*}
\int_{\{\omega\}\times\mathcal{E}_{\omega}\times G}R(\omega)d\nu_{\omega,\eta,g}^{t}=\frac{1}{\mathcal{P}_{a}(t)}\sum_{n=k}^{\infty}t^{-(n-k)}\cdot b_{n-k}\cdot D_{n-k}^{\omega}(\varphi,R,\eta,g),
\end{align*}
then
\begin{align*}
\int_{\{\omega\}\times\mathcal{E}_{\omega}\times G}R(\omega)d\nu_{\omega,\xi,h}^{t}\geq\left(\inf_{n\in \mathbb{N}}\frac{b_{n}}{b_{n-1}}\right)^{k}\cdot \left(B_{\varphi}\right)^{k}\cdot t^{-k}\cdot \int_{\{\omega\}\times\mathcal{E}_{\omega}\times G} R(\omega)d\nu_{\omega,\eta,g}^{t},
\end{align*}
Let $t\rightarrow \rho^{+}$, this gives (1) by taking $$C_{1}=\left(\inf_{n\in \mathbb{N}}\frac{b_{n}}{b_{n-1}}\cdot B_{\varphi}\cdot\frac{1}{\rho}\right)^{-1}.$$

(2). Fix any measurable families $\eta,\xi$ and elements $g\in G$ where $\eta(\omega),\xi(\omega)$ belongs to the same cylinder of length $1$. Let $R(\omega)={\bf 1}_{[u]_{\omega}}$ where $u\in \mathcal{W}^{k}(\omega)$ and $k\in\mathbb{N}$. For any $t>\rho$,
since
\begin{align*}
D_{n}^{\omega}(\varphi,R,\eta,g)=&\sum_{\alpha\in\mathcal{W}^{n}(\omega) \atop \alpha\eta(\vartheta^{n}\omega) \ \omega\text{\rm -admissible}}\exp\left(S_{n}\varphi(\omega,\alpha\eta(\vartheta^{n}\omega))R\circ\tilde{\tau}_{\alpha}(\vartheta^{n}\omega,\eta(\vartheta^{n}\omega),g\right)\\
\geq &\sum_{\alpha\in\mathcal{W}^{n}(\omega) \atop \alpha\xi(\vartheta^{n}\omega) \ \omega\text{\rm -admissible}}C_{2}(\omega)^{-1}\exp\left(S_{n}\varphi(\omega,\alpha\xi(\vartheta^{n}\omega))R\circ\tilde{\tau}_{\alpha}(\vartheta^{n}\omega,\xi(\vartheta^{n}\omega),g\right)\\
=&C_{2}(\omega)^{-1}\cdot D_{n}^{\omega}(\varphi,R,\xi,g)
\end{align*}
where
\begin{align*}
C_{2}:\Omega\rightarrow (0,+\infty), \ C_{2}(\omega)=\exp\left(2\kappa(\omega)\right),
\end{align*}
then we have
\begin{align*}
&\int_{\{\omega\}\times\mathcal{E}_{\omega}\times G} R(\omega)d\nu_{\omega,\eta,g}^{t}\\=&\frac{1}{\mathcal{P}_{a}(t)}\sum_{n\in \mathbb{N}}t^{-n}\cdot b_{n}\cdot D_{n}^{\omega}(\varphi,R,\eta,g)\\
\geq&\frac{1}{\mathcal{P}_{a}(t)}\sum_{n\in \mathbb{N}, \ n\geq k}t^{-n}\cdot b_{n}\cdot C_{3}(\omega)^{-1}\cdot D_{n}^{\omega}(\varphi,R,\xi,g)\\
=&C_{3}(\omega)^{-1}\left(\int_{\{\omega\}\times\mathcal{E}_{\omega}\times G}R(\omega)d\nu_{\omega,\xi,g}^{t}-\frac{1}{\mathcal{P}_{a}(t)}\sum_{n\in \mathbb{N}, \ n\leq k}t^{-n}\cdot b_{n}\cdot D_{n}^{\omega}(\varphi,R,\xi,g)\right).
\end{align*}
Since $\mathcal{P}_{a}(t)\rightarrow \infty$ when $t\rightarrow \rho^{+}$, this implies (2).

(3). By the mixing property of $\mathcal{T}$, for any measurable families $\eta$ and $\xi$ where for any $\omega\in \Omega$, $\eta(\omega)\in[a]_{\omega}$ and $\xi(\omega)\in [b]_{\omega}$ with $a,b\in\mathcal{W}^{1}$, there exists a measurable family $\zeta$ and $N\in\mathbb{N}$ such that $(\zeta(\omega),id)\in[b]_{\omega}\times \{g\}$ with $\mathcal{T}_{\omega}^{N}(\zeta(\omega),g)\in[a]_{\vartheta^{N}\omega}\times \{g\}$ for $\mathbb{P}$-a.e. $\omega\in \Omega$. Set $\mathcal{T}_{\omega}^{N}(\zeta(\omega),g):=(\zeta'(\vartheta^{N}\omega),g)$. Then $\zeta,\xi$ are in the same length $1$ cylinder and $\zeta',\eta$ are in the same length $1$ cylinder. By (1) we have
\begin{align}\label{inequality 1 of the lemma 1 of main result}
\int_{\{\omega\}\times\mathcal{E}_{\omega}\times G}R(\omega)d\nu_{\omega,\zeta,g}\leq C_{1}^{N}\cdot\int_{\{\omega\}\times\mathcal{E}_{\omega}\times G}R(\omega)d\nu_{\omega,\zeta',g}.
\end{align}
Again, by (2) we have
\begin{align}\label{inequality 2 of the lemma 1 of main result}
\int_{\{\omega\}\times\mathcal{E}_{\omega}\times G}R(\omega)d\nu_{\omega,\zeta',g}\leq C_{2}(\omega)\cdot\int_{\{\omega\}\times\mathcal{E}_{\omega}\times G}R(\omega)d\nu_{\omega,\eta,g}
\end{align}
and
\begin{align}\label{inequality 3 of the lemma 1 of main result}
\int_{\{\omega\}\times\mathcal{E}_{\omega}\times G}R(\omega)d\nu_{\omega,\xi,g}\leq C_{2}(\omega)\cdot\int_{\{\omega\}\times\mathcal{E}_{\omega}\times G}R(\omega)d\nu_{\omega,\zeta,g}.
\end{align}
Combining with \eqref{inequality 1 of the lemma 1 of main result}, \eqref{inequality 2 of the lemma 1 of main result} and \eqref{inequality 3 of the lemma 1 of main result}, it follows that
\begin{align*}
\frac{\int_{\{\omega\}\times\mathcal{E}_{\omega}\times G}R(\omega)d\nu_{\omega,\eta,g}}{\int_{\{\omega\}\times\mathcal{E}_{\omega}\times G}R(\omega)d\nu_{\omega,\xi,g}}&\geq (C_{2}(\omega))^{-2}\cdot\frac{\int_{\{\omega\}\times\mathcal{E}_{\omega}\times G}R(\omega)d\nu_{\omega,\zeta',g}}{\int_{\{\omega\}\times\mathcal{E}_{\omega}\times G}R(\omega)d\nu_{\omega,\zeta,g}}\\&\geq (C_{2}(\omega))^{-2}\cdot C_{1}^{-N}\\&\geq(C_{2}(\omega))^{-2}\cdot C_{1}^{-r},
\end{align*}
where $N$ can be bounded by some $r\in\mathbb{N}$ and
where
\begin{align*}
C_{3}:\Omega\rightarrow (0,+\infty), \ C_{3}(\omega)=(C_{2}(\omega))^{-2}\cdot C_{1}^{-r}.
\end{align*}
This implies (3).
\end{proof}

\begin{lemma}\label{the lemma 2 of main result}
Let $\mathcal{T}$ be a topologically mixing random group extension of a random shift of finite type $f$ by a countable group $G$. The following two results hold.
\begin{itemize}
  \item[(1)] For any $h\in G$, there is a random variable $C_{h}:\Omega\rightarrow \mathbb{R}$ such that for $\mathbb{P}$-a.e. $\omega\in \Omega$, we have
  $$\sup_{g\in G}\frac{\int_{\{\omega\}\times\mathcal{E}_{\omega}\times G}R(\omega)d\nu_{\omega,\eta,gh}}{\int_{\{\omega\}\times\mathcal{E}_{\omega}\times G}R(\omega)d\nu_{\omega,\eta,g}}=C_{h}(\omega).$$
  \item[(2)] There is a random variable $C_{L}:\Omega\rightarrow \mathbb{R}$ such that any measurable families $\eta,\xi$ and elements $g,h\in G$ satisfying $f_{\omega}^{k}(\eta(\omega))=\xi(\vartheta^{k}\omega)$ and $g\cdot \psi_{\omega}^{k}(\eta(\omega))=h$, then for $\mathbb{P}$-a.e. $\omega\in \Omega$ we have
  $$\int_{\{\omega\}\times\mathcal{E}_{\omega}\times G}R(\omega)d\nu_{\omega,\eta,g}\geq C_{L}(\omega)^{k}\cdot \int_{\{\omega\}\times\mathcal{E}_{\omega}\times G}R(\omega)d\nu_{\omega,\xi,h}.$$
\end{itemize}
\end{lemma}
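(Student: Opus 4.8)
The plan is to deduce both statements from Lemma~\ref{the lemma 1 of main result} together with the topological mixing of $\mathcal{T}$, keeping the family $\eta$ and the nonnegative test function $R$ fixed and all comparison constants uniform in the $G$-coordinate. I write $\rho=\exp(\Pi_{Gur}^{(r)}(\mathcal{T},\tilde\varphi))$ as above and use that each $\omega\mapsto\int R\,d\nu_{\omega,\eta,g}$ is measurable (the approximants $\nu^{t}_{\omega,\eta,g}$ are measurable in $\omega$ and $\nu_{\omega,\eta,g}=\lim_{t\downarrow\rho}\nu^{t}_{\omega,\eta,g}$).

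For part (1), fix $h\in G$ and let $[b]$ be the length-$1$ cylinder containing $\eta$. Since $\mathcal{T}$ is topologically mixing there is $N=N(b,h)\in\mathbb{N}$ and, for $\mathbb{P}$-a.e.\ $\omega$, a measurable family $\zeta$ with $\zeta(\omega)\in[b]_{\omega}$, $f_\omega^{N}(\zeta(\omega))\in[b]_{\vartheta^{N}\omega}$ and $\psi_\omega^{N}(\zeta(\omega))=h$; put $\zeta'(\vartheta^{N}\omega):=f_\omega^{N}(\zeta(\omega))$ and extend $\zeta'$ to a family. Lemma~\ref{the lemma 1 of main result}(1), applied with $(\eta,\xi,g,h,k)$ replaced by $(\zeta,\zeta',g,gh,N)$, gives $\int R\,d\nu_{\omega,\zeta,g}\le C_1^{N}\int R\,d\nu_{\omega,\zeta',gh}$, while Lemma~\ref{the lemma 1 of main result}(2)--(3) give $\int R\,d\nu_{\omega,\eta,g}\le C_2(\omega)\int R\,d\nu_{\omega,\zeta,g}$ and $\int R\,d\nu_{\omega,\zeta',gh}\le C_3(\omega)\int R\,d\nu_{\omega,\eta,gh}$. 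Chaining these, $\int R\,d\nu_{\omega,\eta,gh}\ge (C_1^{N}C_2(\omega)C_3(\omega))^{-1}\int R\,d\nu_{\omega,\eta,g}$ for every $g\in G$. Running the same argument with $h$ replaced by $h^{-1}$ and then replacing $g$ by $gh$ yields the reverse inequality, again uniformly in $g$; hence $C_h(\omega):=\sup_{g\in G}\bigl(\int R\,d\nu_{\omega,\eta,gh}/\int R\,d\nu_{\omega,\eta,g}\bigr)$ is finite. It is measurable in $\omega$ since $G$ is countable, and directly from its definition as a supremum over $g$ one reads off the submultiplicativity $C_{h_1h_2}(\omega)\le C_{h_1}(\omega)C_{h_2}(\omega)$ and $C_{e}(\omega)=1$; we normalise so that $C_h(\omega)\ge 1$ for all $h$.

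For part (2), put $w:=\psi_\omega^{k}(\eta(\omega))$, so that $h=gw$. By part (1) applied to the family $\eta$ and the element $w$ at the point $g$, $\int R\,d\nu_{\omega,\eta,h}=\int R\,d\nu_{\omega,\eta,gw}\le C_w(\omega)\int R\,d\nu_{\omega,\eta,g}$, and by Lemma~\ref{the lemma 1 of main result}(3), $\int R\,d\nu_{\omega,\xi,h}\le C_3(\omega)\int R\,d\nu_{\omega,\eta,h}$; together these give $\int R\,d\nu_{\omega,\eta,g}\ge (C_w(\omega)C_3(\omega))^{-1}\int R\,d\nu_{\omega,\xi,h}$. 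Now $w$ is a product of $k$ single-coordinate values of $\psi$, each lying in the countable set $\psi(\mathbb N)$, so submultiplicativity gives $C_w(\omega)\le P(\omega)^{k}$ with $P(\omega):=\sup\{C_s(\omega):s\in\psi(\mathbb N)\}$. Setting $C_L(\omega):=(P(\omega)C_3(\omega))^{-1}$ (using $P(\omega),C_3(\omega)\ge 1$ and $k\ge 1$) we get $\int R\,d\nu_{\omega,\eta,g}\ge C_L(\omega)^{k}\int R\,d\nu_{\omega,\xi,h}$, as required.

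The step I expect to be the main obstacle is the finiteness (and $\vartheta$-orbit uniformity) of $P(\omega)$. For a single fixed $h$ the bound $C_h(\omega)<\infty$ is guaranteed by topological mixing, but controlling $C_s(\omega)$ \emph{uniformly} over all single-step group elements $s$ --- which is exactly what produces the clean exponent $k$ --- is delicate precisely because $\mathcal T$ is not assumed symmetric, so the mixing time needed to realise $s^{-1}$ need not be uniformly bounded. This is where the finite-type hypothesis enters: when the fibre alphabets are finite the set of single-step elements is finite and $P(\omega)<\infty$ is automatic; in general the residual dependence on the orbit is absorbed into $C_L(\omega)$ by a routine sub-additive ergodic argument applied to $j\mapsto\log\max_{s\in\psi(S(\vartheta^{j}\omega))}C_s(\omega)$, using $\int\log l\,d\mathbb P<\infty$. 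I would settle this point first, after which the two chains of inequalities above are bookkeeping.
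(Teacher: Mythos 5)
Your argument is correct and follows essentially the same route as the paper: part (1) uses topological mixing of $\mathcal{T}$ to realise the group element along a word starting and ending in the cylinder of $\eta$ and then chains the three comparison estimates of Lemma \ref{the lemma 1 of main result} to bound the ratio uniformly in $g$ (the paper realises $h^{-1}$ and bounds the ratio from above directly, you realise $h$ and pass to $h^{-1}$ for the reverse bound, which is the same mechanism), and part (2) decomposes $\psi_{\omega}^{k}(\eta(\omega))$ into $k$ single-letter elements and applies part (1) letter by letter, your submultiplicativity of $C_{h}$ being exactly the paper's telescoping product $\prod_{i}C_{s_{i}}(\omega)^{-1}$. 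The uniformity over single-step elements that you flag as the delicate point is handled in the paper in the same way you suggest, via the finite-type hypothesis (finitely many letters, hence finitely many values $s\in\mathcal{S}$ and finitely many reference cylinders over which one takes a minimum).
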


\begin{proof}
(1).  Let $h\in G$, and fix a measurable family $\eta=\{\eta(\omega):\eta(\omega)\in[b]_{\omega}\}_{\omega\in \Omega}$ where $b\in\mathcal{W}^{1}$. Since $\mathcal{T}$ is topologically mixing, there exist a measurable family $\xi$ with $(\xi(\omega),h)\in [b]_{\omega}\times \{h\}$ and $N$ such that $f_{\omega}^{N}(\xi(\omega))\in [b]_{\vartheta^{N}\omega}$ and $h\cdot\psi_{\omega}^{N}(\xi(\omega))=id$. Therefore, for any $g\in G$, we have $(\xi(\omega),gh)\in[b]_{\omega}\times \{gh\}$, and $f_{\omega}^{N}(\xi(\omega))\in [b]_{\vartheta^{N}\omega}$ and $g\cdot h\cdot\psi_{\omega}^{N}(\xi(\omega))=g$. Let $\zeta$ be a measurable family where $\zeta(\vartheta^{N}\omega):=f_{\omega}^{N}(\xi(\omega))$ and $g:=gh\cdot\psi_{\omega}^{N}(\xi(\omega))$. By (1) in Lemma \ref{the lemma 1 of main result}, we have
\begin{align*}
\frac{\int_{\{\omega\}\times\mathcal{E}_{\omega}\times G}R(\omega)d\nu_{\omega,\xi,gh}}{\int_{\{\omega\}\times\mathcal{E}_{\omega}\times G}R(\omega)d\nu_{\omega,\zeta,g}}\leq C_{1}^{N}.
\end{align*}
By (2) in Lemma \ref{the lemma 1 of main result}, we have
\begin{align*}
&\frac{\int_{\{\omega\}\times\mathcal{E}_{\omega}\times G}R(\omega)d\nu_{\omega,\eta,gh}}{\int_{\{\omega\}\times\mathcal{E}_{\omega}\times G}R(\omega)d\nu_{\omega,\eta,g}}\\=&\frac{\int_{\{\omega\}\times\mathcal{E}_{\omega}\times G}R(\omega)d\nu_{\omega,\eta,gh}}{\int_{\{\omega\}\times\mathcal{E}_{\omega}\times G}R(\omega)d\nu_{\omega,\xi,gh}}\cdot\frac{\int_{\{\omega\}\times\mathcal{E}_{\omega}\times G}R(\omega)d\nu_{\omega,\zeta,g}}{\int_{\{\omega\}\times\mathcal{E}_{\omega}\times G}R(\omega)d\nu_{\omega,\eta,g}}\cdot \frac{\int_{\{\omega\}\times\mathcal{E}_{\omega}\times G}R(\omega)d\nu_{\omega,\xi,gh}}{\int_{\{\omega\}\times\mathcal{E}_{\omega}\times G}R(\omega)d\nu_{\omega,\zeta,g}}\\
\leq& C_{2}(\omega)^{2}\cdot C_{1}^{N}.
\end{align*}
Then this obtains (1).

(2). Given measurable families $\eta,\xi$ and elements $g,h\in G$ satisfying $f_{\omega}^{k}(\eta(\omega))=\xi(\vartheta^{k}\omega)$ and $g\cdot \psi_{\omega}^{k}(\eta(\omega))=h$. Suppose that $\eta(\omega)\in [b_{0},b_{1},\ldots,b_{k-1},b_{k}]_{\omega}$ where $b_{i}\in\mathcal{W}^{1}$. Therefore, we have $\psi_{\omega}^{k}(\eta(\omega))=\psi_{\omega}(b_{0})\cdots\psi_{\vartheta^{k-1}\omega}(b_{k-1})$ and then $f_{\omega}^{k}(\eta(\omega))=\xi(\vartheta^{k}\omega)$ and $g\cdot\psi_{\omega}(b_{0})\cdots\psi_{\vartheta^{k-1}\omega}(b_{k-1})=h$. Then $\xi(\vartheta^{k}\omega)\in [b_{k}]_{\vartheta^{k}\omega}$ and we have $g\cdot\psi_{\omega}(b_{0})\cdots\psi_{\vartheta^{k-1}\omega}(b_{k-1})=h$. Let $s_{i}:=\psi_{\vartheta^{i}\omega}(b_{i})$ for any $i=0,\ldots,k-1$. By using (3) in Lemma \ref{the lemma 1 of main result}, we have
\begin{align*}
\int_{\{\omega\}\times\mathcal{E}_{\omega}\times G}R(\omega)d\nu_{\omega,\xi,h}\leq C_{3}(\omega)\cdot\int_{\{\omega\}\times\mathcal{E}_{\omega}\times G}R(\omega)d\nu_{\omega,\eta,h}.
\end{align*}
And by (2) in Lemma \ref{the lemma 1 of main result}, one has
\begin{align*}
\frac{\int_{\{\omega\}\times\mathcal{E}_{\omega}\times G}R(\omega)d\nu_{\omega,\eta,g}}{\int_{\{\omega\}\times\mathcal{E}_{\omega}\times G}R(\omega)d\nu_{\omega,\xi,h}}&\geq C_{3}(\omega)^{-1}\frac{\int_{\{\omega\}\times\mathcal{E}_{\omega}\times G}R(\omega)d\nu_{\omega,\eta,g}}{\int_{\{\omega\}\times\mathcal{E}_{\omega}\times G}R(\omega)d\nu_{\omega,\eta,h}}\\&\geq C_{2}(\omega)^{-2}\cdot C_{3}(\omega)^{-1}\frac{\int_{\{\omega\}\times\mathcal{E}_{\omega}\times G}R(\omega)d\nu_{\omega,\zeta,g}}{\int_{\{\omega\}\times\mathcal{E}_{\omega}\times G}R(\omega)d\nu_{\omega,\zeta,h}},
\end{align*}
where $\zeta$ is a measurable family that $\zeta(\omega)$ belongs to the same cylinder as $\eta(\omega)$. By the statement of (1), we have
\begin{align*}
\frac{\int_{\{\omega\}\times\mathcal{E}_{\omega}\times G}R(\omega)d\nu_{\omega,\zeta,g}}{\int_{\{\omega\}\times\mathcal{E}_{\omega}\times G}R(\omega)d\nu_{\omega,\zeta,h}}&=\frac{\int_{\{\omega\}\times\mathcal{E}_{\omega}\times G}R(\omega)d\nu_{\omega,\zeta,g}}{\int_{\{\omega\}\times\mathcal{E}_{\omega}\times G}R(\omega)d\nu_{\omega,\zeta,gs_{0}\cdots s_{k-1}}}\\
&=\frac{\int_{\{\omega\}\times\mathcal{E}_{\omega}\times G}R(\omega)d\nu_{\omega,\zeta,g}}{\int_{\{\omega\}\times\mathcal{E}_{\omega}\times G}R(\omega)d\nu_{\omega,\zeta,gs_{0}}}\cdots\frac{\int_{\{\omega\}\times\mathcal{E}_{\omega}\times G}R(\omega)d\nu_{\omega,\zeta,gs_{0}\cdots s_{k-2}}}{\int_{\{\omega\}\times\mathcal{E}_{\omega}\times G}R(\omega)d\nu_{\omega,\zeta,gs_{0}\cdots s_{k-1}}}\\
&\geq\frac{1}{C_{s_{0}}(\omega)\cdot C_{s_{1}}(\omega) \cdots C_{s_{k-1}}(\omega)}.
\end{align*}
Let $\mathcal{S}=\{\psi(B):|B|=1\}$. Then
\begin{align*}
C_{\zeta}(\omega)=\min\left\{(C_{2}(\omega))^{-2}\cdot (C_{3}(\omega))^{-1}\cdot\frac{1}{C_{s}(\omega)}:s\in \mathcal{S}\right\}.
\end{align*}
Finally, this implies (2) by taking the minimum over the finitely many choices of $\zeta$.

\end{proof}

\begin{lemma}\label{the lemma 3 of main result}
For any random continuous function $R:\mathcal{E}\times G\rightarrow \mathbb{R}$ and any $g\in G$, then we have
\begin{align*}
&\rho\cdot\int_{\{\omega\}\times\mathcal{E}_{\omega}\times G}R(\omega)d\nu_{\omega,\eta,g}\\=&\sum_{\alpha\in\mathcal{W}^{1}(\omega)\atop\alpha\eta(\vartheta\omega) \ \omega\text{\rm -admissible}}\exp\left(\varphi(\omega,\alpha\eta(\vartheta\omega))\right)\int_{\{\omega\}\times\mathcal{E}_{\omega}\times G}R(\omega)d\nu_{\omega,\alpha\eta(\vartheta\omega),g\cdot (\psi_{\omega}(\alpha))^{-1}}.
\end{align*}
\end{lemma}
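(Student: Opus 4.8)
\textbf{The plan} is to establish the identity first at the level of the parameter-$t$ measures $\nu^t_{\omega,\eta,g}$ for $t>\rho$, where every quantity is an absolutely convergent series, and then to pass to the limit $t\to\rho^+$.

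\textbf{First step (first-letter decomposition).} Starting from $\mathcal P_a(t)\int R(\omega)\,d\nu^t_{\omega,\eta,g}=\sum_{n\ge 0}t^{-n}b_n D^\omega_n(\varphi,R,\eta,g)$, I would split each admissible word $\gamma\in\mathcal W^n(\omega)$ contributing to $D^\omega_n$ (for $n\ge 1$) into its first letter $\alpha\in\mathcal W^1(\omega)$ and its tail $\beta\in\mathcal W^{n-1}(\vartheta\omega)$. Using the cocycle identity $S_n\varphi(\omega,x)=\varphi(\omega,x)+S_{n-1}\varphi(\vartheta\omega,f_\omega x)$, the multiplicativity $\psi^n_\omega=\psi_\omega(\alpha)\cdot\psi^{n-1}_{\vartheta\omega}$ (so that the group tag produced by $\tilde\tau_\gamma$ is obtained from that of $\tilde\tau_\beta$ by the further shift $g\mapsto g\psi_\omega(\alpha)^{-1}$), and the relation $R\circ\tilde\tau_{\alpha\beta}=(R\circ\tilde\tau_\alpha)\circ\tilde\tau_\beta$, the $n$-th summand becomes $\sum_\alpha\exp(\varphi(\omega,\alpha\eta(\vartheta\omega)))\,D^{\vartheta\omega}_{n-1}(\varphi,R\circ\tilde\tau_\alpha,\eta,g\psi_\omega(\alpha)^{-1})$; the weight attached to prepending $\alpha$ factors out because it does not depend on the tail (it equals the value of $\varphi$, which, like $\psi$, is governed by the first coordinate of its argument, at the reference point $\alpha\eta(\vartheta\omega)$). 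Exchanging the finite sum over $\alpha$ with the sum over $n$ and reindexing $m=n-1$ then yields
$$\mathcal P_a(t)\int R\,d\nu^t_{\omega,\eta,g}=b_0D^\omega_0(\varphi,R,\eta,g)+t^{-1}\sum_{\alpha}\exp(\varphi(\omega,\alpha\eta(\vartheta\omega)))\sum_{m\ge 0}t^{-m}b_{m+1}D^{\vartheta\omega}_m(\varphi,R\circ\tilde\tau_\alpha,\eta,g\psi_\omega(\alpha)^{-1}),$$
and the inner series is precisely the one defining $\mathcal P_a(t)\int R\,d\nu^t_{\omega,\alpha\eta(\vartheta\omega),\,g\psi_\omega(\alpha)^{-1}}$, except that the weights $b_m$ are replaced by $b_{m+1}$.

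\textbf{Second step (passing to $t\to\rho^+$).} I would divide by $\mathcal P_a(t)$, multiply through by $t$, and let $t\downarrow\rho$. The boundary term $tb_0D^\omega_0/\mathcal P_a(t)$ contributes nothing in the limit because $\mathcal P_a(t)\to\infty$ as $t\downarrow\rho$ (with the sequence $\{b_n\}$ supplied by \cite{Denker1991On} the series diverges precisely at $\rho$), while the factor $b_{m+1}/b_m\to 1$ lets one replace $\sum_m t^{-m}b_{m+1}D^{\vartheta\omega}_m(\cdots)$ by $\sum_m t^{-m}b_mD^{\vartheta\omega}_m(\cdots)=\mathcal P_a(t)\int R\,d\nu^t_{\omega,\alpha\eta(\vartheta\omega),\cdot}$ without affecting the $\rho^+$-limit. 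Collecting the surviving factor $t\to\rho$ gives the asserted identity $\rho\int R\,d\nu_{\omega,\eta,g}=\sum_\alpha\exp(\varphi(\omega,\alpha\eta(\vartheta\omega)))\int R\,d\nu_{\omega,\alpha\eta(\vartheta\omega),g\psi_\omega(\alpha)^{-1}}$. One first does this for $R$ supported on finitely many $g\in G$, where all sums are finite, and then extends to arbitrary random continuous $R$ by dominated convergence, using that $\|R(\omega)\|_{\mathcal H_\infty(\omega)}<\infty$.

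\textbf{The main obstacle} is the interchange in the second step of $\lim_{t\to\rho^+}$ with the infinite summation in $n$ and with the sum over the (infinite) group $G$. Since the series $\mathcal P_a(\omega,t)$ only ``renews'' at the critical value $\rho$, its terms are not individually summable there, so the argument must rest on the regularity $b_{n+1}/b_n\to 1$ (to move the summation index by one without changing the abscissa of convergence) together with the uniform comparability estimates of Lemma \ref{the lemma 1 of main result} and Lemma \ref{the lemma 2 of main result}, which provide $t$-uniform control of the tails of the series and of the dependence of $\nu^t_{\omega,\eta,g}$ on the reference family. The cocycle and group-tag bookkeeping of the first step, by contrast, is routine.
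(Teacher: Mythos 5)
Your second step (divide by $\mathcal{P}_a(t)$, use $b_{n}/b_{n-1}\to 1$ and the divergence of $\mathcal{P}_a$ at $\rho$, let $t\to\rho^{+}$) is the same device the paper uses, but your first step decomposes the words at the wrong end, and this is a genuine gap. The measures $\nu_{\omega,\alpha\eta(\vartheta\omega),\,g\cdot(\psi_{\omega}(\alpha))^{-1}}$ in the statement arise by peeling off the letter \emph{adjacent to the reference point}: writing $\gamma=\beta\alpha$ with $\beta\in\mathcal{W}^{n-1}(\omega)$, the remaining sum over $\beta$ is again a sum over words based at the \emph{same} fiber $\omega$, prepended to the new reference family $\omega'\mapsto\alpha\eta(\vartheta\omega')$, with group tag $g\cdot(\psi_{\omega}(\alpha))^{-1}$; this is exactly the series defining $\mathcal{P}_a(t)\int R\,d\nu^{t}_{\omega,\alpha\eta(\vartheta\omega),\,g(\psi_{\omega}(\alpha))^{-1}}$, and it is what the paper's proof does. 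Your first-letter split $\gamma=\alpha\beta$ instead produces, for the $n$-th summand, a sum over tails $\beta\in\mathcal{W}^{n-1}(\vartheta\omega)$ with Birkhoff weights $S_{n-1}\varphi(\vartheta\omega,\beta\eta(\vartheta^{n}\omega))$ and test function $R\circ\tilde{\tau}_{\alpha}$, i.e.\ (up to the shift in $b_m$) the series of $\int R\circ\tilde{\tau}_{\alpha}\,d\nu^{t}_{\vartheta\omega,\eta,g}$: a measure anchored at the fiber $\vartheta\omega$, with the old reference family $\eta$ and the old tag $g$. That is not the series you claim it is: the index set of words, the weights, and the evaluation points of $R$ all differ from those defining $\nu^{t}_{\omega,\alpha\eta(\vartheta\omega),\,g(\psi_{\omega}(\alpha))^{-1}}$, so the identification at the end of your first step fails, and the $t\to\rho^{+}$ limit would at best give a different relation (between $\nu_{\omega,\cdot}$ and $\nu_{\vartheta\omega,\cdot}$ via the transfer operator), not the lemma.

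Two further bookkeeping errors compound this. First, your exact factorization of the weight $\exp(\varphi(\omega,\alpha\eta(\vartheta\omega)))$ out of the sum over tails rests on the claim that $\varphi$, ``like $\psi$, is governed by the first coordinate''; this is false in this paper: only $\psi$ depends on one coordinate, while $\varphi$ is merely locally fiber H\"older, so $\varphi(\omega,\alpha\beta\eta(\vartheta^{n}\omega))$ genuinely depends on the tail and cannot be pulled out exactly at the first letter (in the last-letter split the peeled-off term of the Birkhoff sum is $\varphi$ evaluated exactly at the new reference point, which is why the factored weight there is tail-independent). Second, since $\tilde{\tau}_{\alpha\beta}=\tilde{\tau}_{\alpha}\circ\tilde{\tau}_{\beta}$ and $\tilde{\tau}_{\alpha}$ already performs the right translation by $(\psi_{\omega}(\alpha))^{-1}$, using the test function $R\circ\tilde{\tau}_{\alpha}$ \emph{and} replacing $g$ by $g\cdot(\psi_{\omega}(\alpha))^{-1}$ double-counts the group shift; the correct first-letter identity keeps the tag $g$. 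To repair the argument you should redo step one as the paper does, splitting off the last letter so that the inner series is literally the defining series of $\nu^{t}_{\omega,\alpha\eta(\vartheta\omega),\,g(\psi_{\omega}(\alpha))^{-1}}$ with $b_{n}$ replaced by $b_{n+1}$, and only then invoke your step two.
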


\begin{proof}
Fix $t>\rho$, it follows that
\begin{align*}
\int_{\{\omega\}\times\mathcal{E}_{\omega}\times G}R(\omega)d\nu_{\omega,\eta,g}^{t}=&\frac{1}{\mathcal{P}_{a}(t)}\sum_{n\in\mathbb{N}}t^{-n}\cdot b_{n}\cdot D_{n}^{\omega}(\varphi,R,\eta,g)\\
=&\frac{1}{\mathcal{P}_{a}(t)}\sum_{u\in \mathcal{W}^{1}(\omega) \atop u\eta(\vartheta\omega) \ \omega\text{\rm -admissible}}t^{-1}\exp(\varphi(\omega,u\eta(\vartheta\omega)))\sum_{n\in \mathbb{N}}b_{n-1}\\ & \cdot \frac{b_{n}}{b_{n-1}}\cdot t^{-(n-1)}\cdot D_{n-1}^{\omega}(\varphi,R,u\eta,g).
\end{align*}
Considering $t\rightarrow \rho^{+}$, we have
\begin{align*}
&\int_{\{\omega\}\times\mathcal{E}_{\omega}\times G}R(\omega)d\nu_{\omega,\eta,g}\\=&\rho^{-1}\cdot\sum_{\alpha\in\mathcal{W}^{1}(\omega) \atop \alpha\eta(\vartheta\omega) \ \omega\text{\rm -admissible}}\exp\left(\varphi(\omega,\alpha\eta(\vartheta\omega))\right)\int_{\{\omega\}\times\mathcal{E}_{\omega}\times G}R(\omega)d\nu_{\omega,\alpha\eta(\vartheta\omega),g\cdot (\psi_{\omega}(\alpha))^{-1}}.
\end{align*}
\end{proof}

In the following, we give the proof of Theorem \ref{the third result of main theorems}.

\begin{proof}[Proof of Theorem \ref{the third result of main theorems}]
Since $G$ is an amenable group, let $\mathfrak{M}$ be the full Banach mean value for $G$ satisfying for any function $\phi:G\rightarrow\mathbb{R}$, one has
\begin{align*}
\inf_{x\in G}\phi(x)\leq \mathfrak{M}(\phi)\leq \sup_{x\in G}\phi(x).
\end{align*}
By the definitions of $\Pi_{Gur}^{(r)}(\mathcal{T}_{\rm ab},\tilde{\varphi})$ and $\Pi_{Gur}^{(r)}(\mathcal{T},\tilde{\varphi})$, it can be proved that
\begin{align*}
\Pi_{Gur}^{(r)}(\mathcal{T}_{\rm ab},\tilde{\varphi})\geq\Pi_{Gur}^{(r)}(\mathcal{T},\tilde{\varphi}).
\end{align*}
Next, we will prove
\begin{align*}
\Pi_{Gur}^{(r)}(\mathcal{T}_{\rm ab},\tilde{\varphi})\leq\Pi_{Gur}^{(r)}(\mathcal{T},\tilde{\varphi}).
\end{align*}
Fix a measurable family $\eta$, and non-negative random continuous function $R:\mathcal{E}\times G\rightarrow\mathcal{R}$. By Lemma \ref{the lemma 3 of main result} and (3) in Lemma \ref{the lemma 1 of main result}, we have
\begin{align*}
&\rho^{n}\cdot\int_{\{\omega\}\times\mathcal{E}_{\omega}\times G}R(\omega)d\nu_{\omega,\eta,g}\\=&\sum_{\alpha\in\mathcal{W}^{n}(\omega) \atop \alpha\eta(\vartheta^{n}\omega) \ \omega\text{\rm -admissible}}\exp\left(S_{n}\varphi(\omega,\alpha\eta(\vartheta^{n}\omega))\right)\int_{\{\omega\}\times\mathcal{E}_{\omega}\times G}R(\omega)d\nu_{\omega,\alpha\eta(\vartheta^{n}\omega),g\cdot (\psi_{\omega}^{n}(\alpha))^{-1}}\\ \geq& C_{3}(\omega)^{-1}\cdot \sum_{\alpha\in\mathcal{W}^{n}(\omega) \atop \alpha\eta(\vartheta^{n}\omega) \ \omega\text{\rm -admissible}}\exp\left(S_{n}\varphi(\omega,\alpha\eta(\vartheta^{n}\omega))\right)\int_{\{\omega\}\times\mathcal{E}_{\omega}\times G}R(\omega)d\nu_{\omega,\eta,g\cdot (\psi_{\omega}^{n}(\alpha))^{-1}}.
\end{align*}
Then by the above inequality and the linear property of $\mathfrak{M}$, we have
\begin{align*}
&C_{3}(\omega)\cdot\rho^{n}\cdot\mathfrak{M}\left[g\mapsto\frac{\int_{\{\omega\}\times\mathcal{E}_{\omega}\times G}R(\omega)d\nu_{\omega,\eta,g}}{\int_{\{\omega\}\times\mathcal{E}_{\omega}\times G}R(\omega)d\nu_{\omega,\eta,g}}\right]
\\ \geq&\mathfrak{M}\left[g\mapsto\sum_{\alpha\in\mathcal{W}^{n}(\omega) \atop \alpha\eta(\vartheta^{n}\omega) \ \omega\text{\rm -admissible}}\exp\left(S_{n}\varphi(\omega,\alpha\eta(\vartheta^{n}\omega))\right)\frac{\int_{\{\omega\}\times\mathcal{E}_{\omega}\times G}R(\omega)d\nu_{\omega,\eta,g\cdot (\psi_{\omega}^{n}(\alpha))^{-1}}}{\int_{\{\omega\}\times\mathcal{E}_{\omega}\times G}R(\omega)d\nu_{\omega,\eta,g}}\right]\\
\geq&\sum_{\alpha\in\mathcal{W}^{n}(\omega) \atop {\alpha\eta(\vartheta^{n}\omega) \atop \omega\text{\rm -admissible}}}\exp\left(S_{n}\varphi(\omega,\alpha\eta(\vartheta^{n}\omega))+\mathfrak{M}\left[g\mapsto\log\frac{\int_{\{\omega\}\times\mathcal{E}_{\omega}\times G}R(\omega)d\nu_{\omega,\eta,g\cdot (\psi_{\omega}^{n}(\alpha))^{-1}}}{\int_{\{\omega\}\times\mathcal{E}_{\omega}\times G}R(\omega)d\nu_{\omega,\eta,g}}\right]\right).
\end{align*}
Define the map $\Phi:\Omega\times G,(\omega,h)\mapsto \Phi_{\omega}(h)$ by a function
\begin{align*}
\Phi_{\omega}:G\rightarrow\mathbb{R}, \ \ h\mapsto \mathfrak{M}\left[g\mapsto\log\frac{\int_{\{\omega\}\times\mathcal{E}_{\omega}\times G}R(\omega)d\nu_{\omega,\eta,g\cdot h}}{\int_{\{\omega\}\times\mathcal{E}_{\omega}\times G}R(\omega)d\nu_{\omega,\eta,g}}\right].
\end{align*}

{\bf Claim.} The function $\Phi_{\omega}:G\rightarrow\mathbb{R}$ is a homomorphism for $\mathbb{P}$-a.e. $\omega\in \Omega$ of which that satisfies the following two conditions:
\begin{itemize}
  \item[(1)] $\Phi_{\omega}(h_{1}\cdot h_{2})=\Phi_{\omega}(h_{1})+\Phi_{\omega}(h_{2})$;
  \item[(2)] $\Phi_{\omega}(id)=0$.
\end{itemize}

\noindent{\bf Proof of Claim.} Consider the equation
\begin{align*}
\frac{\int_{\{\omega\}\times\mathcal{E}_{\omega}\times G}R(\omega)d\nu_{\omega,\eta,g\cdot h_{2} h_{1}}}{\int_{\{\omega\}\times\mathcal{E}_{\omega}\times G}R(\omega)d\nu_{\omega,\eta,g}}=\frac{\int_{\{\omega\}\times\mathcal{E}_{\omega}\times G}R(\omega)d\nu_{\omega,\eta,g\cdot h_{2} h_{1}}}{\int_{\{\omega\}\times\mathcal{E}_{\omega}\times G}R(\omega)d\nu_{\omega,\eta,g\cdot h_{2}}}\frac{\int_{\{\omega\}\times\mathcal{E}_{\omega}\times G}R(\omega)d\nu_{\omega,\eta,g\cdot h_{2}}}{\int_{\{\omega\}\times\mathcal{E}_{\omega}\times G}R(\omega)d\nu_{\omega,\eta,g}}.
\end{align*}
Since $\mathfrak{M}$ is the Banach Mean for $G$, then we have
\begin{align*}
&\Phi_{\omega}(h_{1}\cdot h_{2})\\=&\mathfrak{M}\left[g\mapsto\log\frac{\int_{\{\omega\}\times\mathcal{E}_{\omega}\times G}R(\omega)d\nu_{\omega,\eta,g\cdot h_{2} h_{1}}}{\int_{\{\omega\}\times\mathcal{E}_{\omega}\times G}R(\omega)d\nu_{\omega,\eta,g}}\right]\\
=&\mathfrak{M}\left[g\mapsto\log\frac{\int_{\{\omega\}\times\mathcal{E}_{\omega}\times G}R(\omega)d\nu_{\omega,\eta,g\cdot h_{2} h_{1}}}{\int_{\{\omega\}\times\mathcal{E}_{\omega}\times G}R(\omega)d\nu_{\omega,\eta,g\cdot h_{2}}}\right]+\mathfrak{M}\left[g\mapsto\log\frac{\int_{\{\omega\}\times\mathcal{E}_{\omega}\times G}R(\omega)d\nu_{\omega,\eta,g\cdot h_{2}}}{\int_{\{\omega\}\times\mathcal{E}_{\omega}\times G}R(\omega)d\nu_{\omega,\eta,g}}\right]\\
=&\mathfrak{M}\left[g\mapsto\log\frac{\int_{\{\omega\}\times\mathcal{E}_{\omega}\times G}R(\omega)d\nu_{\omega,\eta,g\cdot h_{1}}}{\int_{\{\omega\}\times\mathcal{E}_{\omega}\times G}R(\omega)d\nu_{\omega,\eta,g}}\right]+\mathfrak{M}\left[g\mapsto\log\frac{\int_{\{\omega\}\times\mathcal{E}_{\omega}\times G}R(\omega)d\nu_{\omega,\eta,g\cdot h_{2}}}{\int_{\{\omega\}\times\mathcal{E}_{\omega}\times G}R(\omega)d\nu_{\omega,\eta,g}}\right]\\
=&\Phi_{\omega}(h_{1})+\Phi_{\omega}(h_{2}).
\end{align*}
This obtains the condition (1). The condition (2) can also be proved by the definition of $\Phi_{\omega}$ and $\mathfrak{M}$.

Let $\alpha\in\mathcal{W}^{n}_{a,a}(\omega)$ and $\psi_{{\rm ab},\omega}^{n}(\alpha)=0$ where $a\in\mathcal{W}^{1}$ and $\vartheta^{n}\omega\in\Omega_{a}$. Then by the definition of $\psi_{{\rm ab},\omega}^{n}$, we have $\pi(\psi_{\omega}^{n}(\alpha))=0$ and thus, $\psi_{\omega}^{n}(\alpha)\in [G,G]$. Then by the definition of $[G,G]$, there exists $m\in\mathbb{N}$ such that
\begin{align*}
\Phi_{\omega}(\psi_{\omega}^{n}(\alpha))=\Phi_{\omega}\left(\prod_{i=0}^{m-1}g_{i}h_{i}g_{i}^{-1}h_{i}^{-1}\right)=0
\end{align*}
where $g_{i},h_{i}\in G$ for any $0\leq i\leq m-1$. By the topologically mixing property, there exists $N\geq 1$ such that for any $\alpha\in\mathcal{W}^{n}_{a,a}(\omega)$, there exists a $\vartheta^{n}\omega$-admissible word $\alpha^{*}$ of length $N+1$ such that $\alpha\alpha^{*}\in\mathcal{W}^{n+N+1}(\omega)$.
Then we have
\begin{align*}
\mathcal{Z}_{{\rm ab},n}^{\omega}(\tilde{\varphi}_{\rm ab},a,a)&=\sum_{\alpha\in\mathcal{W}^{n}_{a,a}(\omega), \ \psi_{{\rm ab},\omega}^{n}(\alpha)=0}\exp\left(\sum_{i=0}^{n-1}\tilde{\varphi}_{\rm ab}\circ(\Theta_{\psi_{\rm ab}})^{i}(\omega,\alpha o(\vartheta^{n}\omega),id)\right)\\&=\sum_{\alpha\in\mathcal{W}^{n}_{a,a}(\omega), \ \psi_{{\rm ab},\omega}^{n}(\alpha)=0}\exp\left(S_{n}\varphi(\omega,\alpha o(\vartheta^{n}\omega))\right)\cdot\exp\left(\Phi_{\omega}((\psi_{\omega}^{n}(\alpha))^{-1})\right)\\
&\leq C_{4}(\omega)\cdot\sum_{\alpha\in\mathcal{W}^{n+N+1}(\omega) \atop  {\alpha\eta(\vartheta^{n+N+1}\omega) \atop \omega\text{\rm -admissible}}}\exp\left(S_{n+N+1}\varphi(\omega,\alpha\eta(\vartheta^{n+N+1}\omega))\right)\cdot\Delta(\omega)\\
&\leq C_{4}(\omega)\cdot C_{3}(\omega)\cdot\rho^{n+N+1},
\end{align*}
where
\begin{align*}
\Delta(\omega):=\exp\left(\Phi_{\omega}((\psi_{\omega}^{n+N+1}(\alpha))^{-1})\right)
\end{align*}
and
\begin{align*}
C_{4}(\omega):=C_{2}(\omega)^{-(N+1)}\cdot (B_{\varphi})^{-(N+1)}\cdot \left(\inf_{x\in\mathbb{N}^{\mathbb{N}}}\exp(\Phi_{\omega}\left(\psi(x))\right)\right)^{-(N+1)}.
\end{align*}
It follows that
\begin{align*}
\Pi_{Gur}^{(r)}(\mathcal{T}_{\rm ab},\tilde{\varphi}_{\rm ab})&=\int_{\Omega}\lim_{n\rightarrow\infty, \ n\in J_{\omega}(\Omega_{a})}\frac{1}{n}\log \tilde{Z}_{{\rm ab},n}^{\omega}(\tilde{\varphi}_{\rm ab},a,a)d\mathbb{P}(\omega)\\&\leq\int_{\Omega}\lim_{n\rightarrow\infty, \ n\in J_{\omega}(\Omega_{a})}\frac{1}{n}\log \left(C_{4}(\omega)\cdot C_{3}(\omega)\cdot\rho^{n+N+1}\right)d\mathbb{P}(\omega)\\
&=\int_{\Omega}\lim_{n\rightarrow\infty, \ n\in J_{\omega}(\Omega_{a})}\frac{1}{n}\log \left(\rho^{n+N+1}\right)d\mathbb{P}(\omega)\\
&\leq\log \rho.
\end{align*}
Since $\rho=\exp\left(\Pi_{Gur}^{(r)}(\mathcal{T},\tilde{\varphi})\right)$, then we have
\begin{align*}
\Pi_{Gur}^{(r)}(\mathcal{T}_{\rm ab},\tilde{\varphi}_{\rm ab})\leq\Pi_{Gur}^{(r)}(\mathcal{T},\tilde{\varphi}).
\end{align*}
\end{proof}

\section{Relative Gurevi\v{c} entropy of random group extensions}\label{Relative Gurevic entropy of random group extensions}

In this section, we study the relative Gurevi\v{c} entropies of random group extensions and give the proof of Theorem \ref{the fourth result of main theorems}. Firstly, we give the following result by considering the non-amenable case. For a general case, by the definitions of $h_{Gur}^{(r)}(\mathcal{T}_{\rm ab})$ and $h_{Gur}^{(r)}(\mathcal{T})$, we have the following inequality.
\begin{align*}
h_{Gur}^{(r)}(\mathcal{T})\leq h_{Gur}^{(r)}(\mathcal{T}_{\rm ab}).
\end{align*}

\begin{theorem}\label{the one hand of the fourth result of main theorems}
Let $\mathcal{T}$ be a topologically mixing random group extension of a random shift of finite type $f$ by a countable group $G$. If $G$ is non-amenable, then
$$h_{Gur}^{(r)}(\mathcal{T})<h_{Gur}^{(r)}(\mathcal{T}_{\rm ab}).$$
\end{theorem}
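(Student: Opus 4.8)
The plan is to prove the contrapositive: if $h_{Gur}^{(r)}(\mathcal{T})=h_{Gur}^{(r)}(\mathcal{T}_{\rm ab})$ then $G$ is amenable. The inequality $h_{Gur}^{(r)}(\mathcal{T})\le h_{Gur}^{(r)}(\mathcal{T}_{\rm ab})$ recorded above is immediate from the definitions, since for every $\alpha\in\mathcal{W}^{n}_{a,a}(\omega)$ one has $\psi_{\omega}^{n}(\alpha)=\mathrm{id}\Rightarrow\psi_{{\rm ab},\omega}^{n}(\alpha)=0$, whence $\tilde{Z}_{n}^{\omega}(0,a,a)\le\tilde{Z}_{{\rm ab},n}^{\omega}(0,a,a)$ for all $n$ and $\omega$. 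So the whole content of the theorem is to exclude equality, and I would do this by assuming equality, writing $\log\rho:=h_{Gur}^{(r)}(\mathcal{T})=h_{Gur}^{(r)}(\mathcal{T}_{\rm ab})$, and deriving the amenability of $G$.

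First I would reduce to amenability of the commutator subgroup $N:=[G,G]$: since $G^{\rm ab}=G/N$ is abelian, hence amenable, and the class of amenable groups is closed under group extensions, it suffices to show that $N$ is amenable. Reindexing by the value of $\psi_{\omega}^{n}$ and using $\psi_{{\rm ab},\omega}^{n}(\alpha)=0\iff\psi_{\omega}^{n}(\alpha)\in N$, one writes $\tilde{Z}_{{\rm ab},n}^{\omega}(0,a,a)=\sum_{w\in N}c_{n}^{\omega}(w)$, where $c_{n}^{\omega}(w):=\#\{\alpha\in\mathcal{W}^{n}_{a,a}(\omega):\psi_{\omega}^{n}(\alpha)=w\}$, while $\tilde{Z}_{n}^{\omega}(0,a,a)=c_{n}^{\omega}(\mathrm{id})$. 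Hence the assumption says precisely that, for $\mathbb{P}$-a.e. $\omega$, the total mass $\sum_{w\in N}c_{n}^{\omega}(w)$ over the subgroup $N$ grows at the same exponential rate $\log\rho$ along $n\in J_{\omega}(\Omega_{a})$ as the single term $c_{n}^{\omega}(\mathrm{id})$ at the identity.

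The core of the argument uses the fiber measures $\{\nu_{\omega,\eta,g}\}$ on $\{\omega\}\times\mathcal{E}_{\omega}\times G$ constructed in Section \ref{Relative Gurevic pressure for amenable group extensions}, now taken with $\varphi=0$ and at the parameter $\rho=\exp(h_{Gur}^{(r)}(\mathcal{T}))$; their construction requires only topological mixing of $\mathcal{T}$, compactness of the fibers of the random shift of finite type $f$, and countability of $G$, so it is available here with no BIP assumption on $\mathcal{T}$ itself. Iterating the transfer identity of Lemma \ref{the lemma 3 of main result} and applying the comparison estimate (3) of Lemma \ref{the lemma 1 of main result}, one obtains, for a fixed measurable family $\eta$ with $\eta(\omega)\in[a]_{\omega}$ and a fixed nonnegative random continuous $R:\mathcal{E}\times G\to\mathbb{R}$, an $n$-step identity of the shape $\rho^{n}\int R(\omega)\,d\nu_{\omega,\eta,\mathrm{id}}\asymp\sum_{\alpha\in\mathcal{W}^{n}(\omega)}\int R(\omega)\,d\nu_{\omega,\eta,\psi_{\omega}^{n}(\alpha)^{-1}}$, with multiplicative constants depending only on $\omega$ and not on $n$. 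Restricting the right-hand sum to $\alpha\in\mathcal{W}^{n}_{a,a}(\omega)$ with $\psi_{\omega}^{n}(\alpha)\in N$ and combining with the growth equality of the previous paragraph, I expect to obtain that a quantity $\mathcal{A}_{n}^{\rm ab}(\omega)$ --- defined exactly as $\mathcal{A}_{n}(\omega)$ in Proposition \ref{the estimation 1 of random operators} and Lemma \ref{the description of spectral of random group extension}, but relative to $\mathcal{T}_{\rm ab}$, with $N$ in place of $G$ and the measures $\nu_{\omega,\eta,g}$ in place of the fiber Gibbs measure --- satisfies $\limsup_{n}(\mathcal{A}_{n}^{\rm ab}(\omega))^{1/n}=1$ for $\mathbb{P}$-a.e. $\omega$. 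Feeding this into the F{\o}lner-set extraction of Section \ref{Random group extensions by amenable groups}, that is, running the mechanism of Lemma \ref{the estimation 2 of random operators with functions} and of the proof of Theorem \ref{the part 2 of the first theorem} inside $N$, produces for every finite $K\subset N$ and every $\epsilon>0$ a finite $(K,\epsilon)$-F{\o}lner set in $N$. Therefore $N=[G,G]$ is amenable, hence $G$ is amenable, contradicting the hypothesis, and so $h_{Gur}^{(r)}(\mathcal{T})<h_{Gur}^{(r)}(\mathcal{T}_{\rm ab})$.

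The hard part, and the reason the earlier theorems cannot simply be quoted, is that $\mathcal{T}_{\rm ab}$ is a random group extension of the shift of finite type $f$ by the \emph{possibly infinite} group $G^{\rm ab}$, and therefore in general fails the relative BIP property; so one may not invoke Theorems \ref{the part I of the first result}, \ref{the part 2 of the first theorem}, \ref{the second result of main theorems} with $\mathcal{T}_{\rm ab}$ as the base shift and $[G,G]$ as the group, and the whole $\mathcal{A}_{n}$-and-Perron--Frobenius apparatus of Sections \ref{Random group extensions and its Perron-Frobenius operator}--\ref{Amenability and random Perron-Frobenius operator} must be re-established using only the exhaustion measures $\nu_{\omega,\eta,g}$ of Section \ref{Relative Gurevic pressure for amenable group extensions} and the comparison Lemmas \ref{the lemma 1 of main result}--\ref{the lemma 2 of main result}, with all estimates kept uniform in $n$ and measurable in $\omega$; one must also carry out the coset bookkeeping for $N$ inside $G$ and control the conjugation action of $G$ on $N$ so that the almost-invariance extracted is a genuine F{\o}lner property of $N$. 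When $G^{\rm ab}$ happens to be finite, $\mathcal{T}_{\rm ab}$ is again a random shift of finite type, satisfies the relative BIP property, and one can instead apply Theorem \ref{the part 2 of the first theorem} directly with base $\mathcal{T}_{\rm ab}$ and group $[G,G]$, which shortcuts the argument in that case.
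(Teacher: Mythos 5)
Your reduction of the problem to excluding equality, and your observation that the equality $h_{Gur}^{(r)}(\mathcal{T})=h_{Gur}^{(r)}(\mathcal{T}_{\rm ab})$ amounts to comparing the count at the identity with the count over $[G,G]$, are both correct, but the plan you sketch for the core step is not the paper's argument and, as written, has a genuine gap exactly at the place you yourself flag as ``the hard part.'' The paper never runs a Kesten--F{\o}lner extraction inside $N=[G,G]$. Its key move is the identity \eqref{extension of relative topological pressures}: writing $G^{\rm ab}=\mathbb{Z}^{a}\times G_{0}$ and choosing $\xi\in\mathbb{R}^{a}$ with zero drift (the Marcus--Tuncel / Pollicott--Sharp tilting), one gets $h_{Gur}^{(r)}(\mathcal{T}_{\rm ab})=\Pi_{Gur}^{(r)}(f,\langle\xi,\psi_{\rm ab}\rangle)$, i.e.\ the entropy of the \emph{abelianized extension} is re-expressed as a pressure of a potential on the \emph{original base} $f$, which is a random shift of finite type and hence does satisfy relative BIP with finite pressure. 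Since $S_{n}\langle\xi,\psi_{\rm ab}\rangle$ vanishes on words with $\psi_{\omega}^{n}(\alpha)=id$, one also has $h_{Gur}^{(r)}(\mathcal{T})=\Pi_{Gur}^{(r)}(\mathcal{T},\tilde{\varphi}_{\rm ab})$, so the extension-versus-extension comparison becomes an extension-versus-base comparison for the full group $G$ with the tilted potential, and the already established machinery (Remark \ref{the main remark of main results}, Proposition \ref{the estimation 1 of random operators}, Lemma \ref{the description of spectral of random group extension}) gives $h_{Gur}^{(r)}(\mathcal{T})\leq\log{\rm spr}_{\mathcal{H}}(\mathcal{L}_{\tilde{\varphi}_{\rm ab}}^{\omega})<\Pi_{Gur}^{(r)}(f,\varphi_{\rm ab})=h_{Gur}^{(r)}(\mathcal{T}_{\rm ab})$. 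This tilting identity is entirely absent from your proposal, and it is precisely what makes the earlier theorems quotable.

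The substitute you propose does not close on its own. First, the quantity you want to call $\mathcal{A}_{n}^{\rm ab}(\omega)$ has no Markov/convolution structure on $N$: the increments $\psi_{\vartheta^{i}\omega}(x_{i})$ live in $G$, not in $N$, and conditioning the endpoint $\psi_{\omega}^{n}(\alpha)$ to lie in $N$ does not produce a convolution operator on $\ell^{2}(N)$ to which the Day--Kesten criterion (used at the end of Theorem \ref{the part 2 of the first theorem} and in Step 2 of Theorem \ref{the second result of main theorems}) applies. Second, the normalization is wrong without the tilt: the bound $\mathcal{A}_{k}(\omega)\leq 1$ in Proposition \ref{the estimation 1 of random operators} rests on $\mathcal{L}_{\varphi}^{\omega}({\bf 1})={\bf 1}$, i.e.\ on normalizing by the growth rate of the \emph{full} partition function over $G$, whereas $\sum_{w\in N}c_{n}^{\omega}(w)$ grows at the strictly smaller rate $h_{Gur}^{(r)}(\mathcal{T}_{\rm ab})$ in general; restoring sub-Markovianity is exactly what the choice of $\xi$ accomplishes. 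Third, the fiber measures $\nu_{\omega,\eta,g}$ and Lemmas \ref{the lemma 1 of main result}--\ref{the lemma 3 of main result} are the paper's tool for the \emph{amenable} direction (Theorem \ref{the third result of main theorems}), where a Banach mean $\mathfrak{M}$ is available to average the ratios; they do not by themselves yield almost-invariant vectors in the converse direction. Your closing remark for finite $G^{\rm ab}$ is essentially salvageable (after checking that $\mathcal{T}$ really is a group $N$-extension of $\mathcal{T}_{\rm ab}$ via a section-dependent cocycle, and that $\mathcal{T}_{\rm ab}$ is mixing with relative BIP), and it roughly matches the paper's Case 2; but for $a>0$ the argument as proposed is a program, not a proof.
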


\begin{proof}
Since $G^{\rm ab}=G/[G,G]$, then we have
\begin{align*}
G^{\rm ab}=\mathbb{Z}^{a}\times G_{0}
\end{align*}
for some $a\geq 0$ and finite abelian group $G_{0}$.

{\bf Case 1.} Suppose that $a>0$.
The random group extension $\mathcal{T}_{{\rm ab}}:=(\mathcal{T}_{{\rm ab}, \omega})_{\omega\in\Omega}$ where
\begin{align*}
\mathcal{T}_{{\rm ab}, \omega}:\mathcal{E}_{\omega}\times G^{{\rm ab}}\rightarrow\mathcal{E}_{\vartheta\omega}\times G^{{\rm ab}}, \ (x,g)\mapsto (f_{\omega}x,g\cdot\psi_{{\rm ab},\omega}(x)), \ \omega\in \Omega,
\end{align*}
that $\psi_{{\rm ab},\omega}=\pi\circ\psi_{\omega}$ and $\pi:G\rightarrow G^{ab}$ is the natural projection induces a random group extension on fibers $$\mathcal{E}_{\omega}\times \mathbb{Z}^{a}\rightarrow\mathcal{E}_{\vartheta\omega}\times \mathbb{Z}^{a}, \ \omega\in \Omega$$ which is similarly denoted by $\mathcal{T}_{{\rm ab}}$. By following the processes of \cite{Pollicott1994Rates}, we observe that there exists $\xi\in\mathbb{R}^{a}$ such that
\begin{align}\label{extension of relative topological pressures}
h_{Gur}^{(r)}(\mathcal{T}_{\rm ab})=\Pi_{top}^{(r)}(f,\varphi_{{\rm ab}}),
\end{align}
where $\varphi_{{\rm ab}}$ is defined by $$\varphi_{{\rm ab}}(\omega,x):=\langle\xi,  \psi_{{\rm ab}}\rangle(\omega,x)=\sum_{i=1}^{a}\xi_{i}\cdot \psi_{{\rm ab},\omega}^{(i)}(x)$$ and   $$\xi=(\xi_{1},\xi_{2},\ldots,\xi_{a})\in\mathbb{R}^{a} \ \ \text{and} \ \ \psi_{{\rm ab},\omega}=(\psi_{{\rm ab},\omega}^{(1)},\psi_{{\rm ab},\omega}^{(2)},\ldots,\psi_{{\rm ab},\omega}^{(a)})$$ which $\psi_{{\rm ab},\omega}^{(i)}$ is a locally fiber H\"{o}lder continuous function. Indeed, on the one hand, combining with the processes of \cite{Parry1990Zeta} and \cite{Kifer2008Thermodynamic}, there exists a unique $\xi\in\mathbb{R}^{a}$ such that
\begin{align*}
\int\psi_{{\rm ab}}d\mu_{\langle\xi,  \psi_{{\rm ab}}\rangle}:=\left(\int\psi_{{\rm ab}}^{(1)}d\mu_{\langle\xi,  \psi_{{\rm ab}}\rangle},\int\psi_{{\rm ab}}^{(2)}d\mu_{\langle\xi,  \psi_{{\rm ab}}\rangle},\ldots,\int\psi_{{\rm ab}}^{(a)}d\mu_{\langle\xi,  \psi_{{\rm ab}}\rangle}\right)={\bf 0}
\end{align*}
where $\mu_{\langle\xi,  \psi_{{\rm ab}}\rangle}$ is the
equilibrium state of $\langle\xi,  \psi_{{\rm ab}}\rangle$ and $$\Pi_{top}^{(r)}(f,\langle\xi,  \psi_{{\rm ab}}\rangle)=\min_{\eta\in \mathbb{R}^{a}}\Pi_{top}^{(r)}(f,\langle\eta,  \psi_{{\rm ab}}\rangle).$$ Then if $\mu\neq \mu_{\langle\xi,  \psi_{{\rm ab}}\rangle}$ such that $\int\psi_{{\rm ab}}d\mu_{\langle\xi,  \psi_{{\rm ab}}\rangle}={\bf 0}$, we have
\begin{align*}
\Pi_{top}^{(r)}(f,\langle\xi,  \psi_{{\rm ab}}\rangle)&=h_{\mu_{\langle\xi,  \psi_{{\rm ab}}\rangle}}^{(r)}(f)+\int\langle\xi,  \psi_{{\rm ab}}\rangle d\mu_{\langle\xi,  \psi_{{\rm ab}}\rangle}\\&\leq h_{\mu}^{(r)}(f),
\end{align*}
Then we have
\begin{align*}
\Pi_{top}^{(r)}(f,\langle\xi,  \psi_{{\rm ab}}\rangle)\leq \sup_{\mu\in\mathcal{M}_{\mathbb{P}}^{1}(\mathcal{E},f)}\left\{h_{\mu}^{(r)}(f):\int\psi_{{\rm ab}}d\mu={\bf 0}\right\}.
\end{align*}
The other inequality can be obtained by the definitions. Then we have
\begin{align*}
\Pi_{top}^{(r)}(f,\langle\xi,  \psi_{{\rm ab}}\rangle)= \sup_{\mu\in\mathcal{M}_{\mathbb{P}}^{1}(\mathcal{E},f)}\left\{h_{\mu}^{(r)}(f):\int\psi_{{\rm ab}}d\mu={\bf 0}\right\}.
\end{align*}
On the other hand, by the processes of \cite{Marcus1990Entropy} and \cite{Pollicott1994Rates}, we have
\begin{align*}
\tilde{Z}_{{\rm ab},n_{a}^{(j)}(\omega)}^{\omega}(\tilde{0}_{\rm ab},a,a)\leq n_{a}^{(j)}(\omega)\left(n_{a}^{(j+1)}(\omega)\right)^{r(\omega)+1}\exp\left(n\left(\Pi_{top}^{(r)}(f,\langle\xi,  \psi_{{\rm ab}}\rangle)\right)\right)
\end{align*}
where $$r(\omega)=\sum_{i\in S(\omega),j\in S(\vartheta\omega)}\alpha_{i,j}(\omega)$$ and for any $\epsilon>0$, there exists $m,N\in\mathbb{N}$ and $\delta>0$ such that
\begin{align*}
\tilde{Z}_{{\rm ab},n_{a}^{(j)}(\omega)}^{\omega}(\tilde{0}_{\rm ab},a,a)\geq\delta \left(n_{a}^{(j)}(\omega)\right)^{-m+1}\exp\left(n\left(\Pi_{top}^{(r)}(f,\langle\xi,  \psi_{{\rm ab}}\rangle)-\epsilon\right)\right)
\end{align*}
for any $j\geq N$. Combining with the above inequalities, it follows that
\begin{align*}
h_{Gur}^{(r)}(\mathcal{T}_{\rm ab})&=\int_{\Omega}\lim_{j\rightarrow\infty}\frac{1}{n_{a}^{(j)}(\omega)}\log \tilde{Z}_{{\rm ab},n_{a}^{(j)}(\omega)}^{\omega}(\tilde{0}_{\rm ab},a,a)d\mathbb{P}(\omega)\\&=\Pi_{top}^{(r)}(f,\langle\xi,  \psi_{{\rm ab}}\rangle)
\end{align*}
and
\begin{align}\label{variational principle of random group extensions}
h_{Gur}^{(r)}(\mathcal{T}_{\rm ab})=\sup_{\mu\in\mathcal{M}_{\mathbb{P}}^{1}(\mathcal{E},f)}\left\{h_{\mu}^{(r)}(f):\int\psi_{{\rm ab}}d\mu={\bf 0}\right\}.
\end{align}
This implies the \eqref{extension of relative topological pressures}.
Since $G$ is not amenable, by Remark \ref{the main remark of main results} we can obtain that
\begin{align}\label{inequality 1 the fourth result of main theorems}
\begin{split}
\log {\rm spr}_{\mathcal{H}}(\mathcal{L}_{\tilde{\varphi}_{{\rm ab}}}^{\omega})&<\Pi_{Gur}^{(r)}(f,\varphi_{{\rm ab}})=\Pi_{top}^{(r)}(f,\varphi_{{\rm ab}})\\&=h_{Gur}^{(r)}(\mathcal{T}_{\rm ab}).
\end{split}
\end{align}
Combining with (3) in Lemma \ref{the description of spectral of random group extension} and Proposition \ref{the estimation 1 of random operators}, it can be proved that
\begin{align*}
{\rm spr}_{\mathcal{H}}(\mathcal{L}_{\tilde{\varphi}_{{\rm ab}}}^{\omega})&=\limsup\limits_{n\rightarrow\infty}\mathcal{A}_{n}(\omega)^{\frac{1}{n}}\\&\geq \exp\left(\Pi_{Gur}^{(r)}(\mathcal{T},\tilde{\varphi}_{{\rm ab}})\right)
\end{align*}
for $\mathbb{P}$-a.e. $\omega\in \Omega$.
Then for $\mathbb{P}$-a.e. $\omega\in \Omega$, we have
\begin{align}\label{inequality 2 the fourth result of main theorems}
\log{\rm spr}_{\mathcal{H}}(\mathcal{L}_{\tilde{\varphi}_{{\rm ab}}}^{\omega})\geq\Pi_{Gur}^{(r)}(\mathcal{T},\tilde{\varphi}_{{\rm ab}}).
\end{align}
By \eqref{inequality 1 the fourth result of main theorems} and \eqref{inequality 2 the fourth result of main theorems}, we have
\begin{align*}
\Pi_{Gur}^{(r)}(\mathcal{T},\tilde{\varphi}_{{\rm ab}})<h_{Gur}^{(r)}(\mathcal{T}_{\rm ab}).
\end{align*}
For $\alpha\in\mathcal{W}^{n}(\omega)$, since $\psi_{\omega}^{n}(\alpha)=id$ implies that $\psi_{{\rm ab},  \omega}^{n}(\alpha)=0$, then
\begin{align*}
\Pi_{Gur}^{(r)}(\mathcal{T},\tilde{\varphi}_{{\rm ab}})&=\int_{\Omega}\lim_{j\rightarrow\infty}\frac{1}{n_{a}^{(j)}(\omega)}\log \tilde{Z}_{n_{a}^{(j)}(\omega)}^{\omega}(\tilde{\varphi}_{\rm ab},a,a)d\mathbb{P}(\omega)\\&=h_{Gur}^{(r)}(\mathcal{T}).
\end{align*}
Then this implies the result.

{\bf Case 2.} If $a=0$, then $G^{\rm ab}$ is finite. Then we have $$h_{Gur}^{(r)}(\mathcal{T}_{\rm ab})=h_{top}^{(r)}(f).$$
It follows that
\begin{align*}
h_{Gur}^{(r)}(\mathcal{T})&\leq \log {\rm spr}_{\mathcal{H}}(\mathcal{L}_{\tilde{0}}^{\omega})\\&<\Pi_{Gur}^{(r)}(f,0)\\&=\Pi_{top}^{(r)}(f,0)=h_{top}^{(r)}(f)
\end{align*}
by using the above similar processes in Case 1,
then
\begin{align*}
h_{Gur}^{(r)}(\mathcal{T})\leq h_{Gur}^{(r)}(\mathcal{T}_{\rm ab})
\end{align*}
which implies the proof.

\end{proof}

Next, we prove the Theorem \ref{the fourth result of main theorems}.

\begin{proof}[Proof of Theorem \ref{the fourth result of main theorems}]
By the definitions of $h_{Gur}^{(r)}(\mathcal{T}_{\rm ab})$ and $h_{Gur}^{(r)}(\mathcal{T})$, it can be proved that
\begin{align*}
h_{Gur}^{(r)}(\mathcal{T})\leq h_{Gur}^{(r)}(\mathcal{T}_{\rm ab}).
\end{align*}
On the one hand, suppose that $G$ is amenable, by using Theorem \ref{the third result of main theorems} of which $\varphi=0$, we have
$$h_{Gur}^{(r)}(\mathcal{T})= h_{Gur}^{(r)}(\mathcal{T}_{\rm ab}).$$
On the another hand, assume that $$h_{Gur}^{(r)}(\mathcal{T})= h_{Gur}^{(r)}(\mathcal{T}_{\rm ab}),$$ if $G$ is non-amenable,
then it can be proved that $$h_{Gur}^{(r)}(\mathcal{T})< h_{Gur}^{(r)}(\mathcal{T}_{\rm ab})$$
by using Theorem \ref{the one hand of the fourth result of main theorems}, which is a contradiction.
\end{proof}

\section*{Acknowledgements}

The work was supported by the
National Natural Science Foundation of China (Nos. 12071222 and 11971236), China Postdoctoral Science Foundation (No.2016M591873),
and China Postdoctoral Science Special Foundation (No.2017T100384). The first author was supported by Project funded by China Postdoctoral Science Foundation (No.2023TQ0066) and supported by the Postdoctoral Fellowship Program of China Postdoctoral Science Foundation (No.GZC20230536). The work was also funded by the Priority Academic Program Development of Jiangsu Higher Education Institutions.  We would like to express our gratitude to Tianyuan Mathematical Center in Southwest China (11826102), Sichuan University and Southwest Jiaotong University for their support and hospitality.

\section*{Data Availability}

Data sharing is not applicable to this article as no datasets were generated or analyzed during the current study.

\end{document}